\newtheorem{theorem}{Theorem}[section]
\newtheorem{lem}[theorem]{Lemma}
\newtheorem{prop}[theorem]{Proposition}
\theoremstyle{definition}
\newtheorem{rem}[theorem]{Remark}
\newtheorem{example}[theorem]{Example}
\numberwithin{equation}{section}
\DeclareMathOperator*{\bigtimes}{\vartimes}
\newcommand{\K}{\mathbb{K}}
\newcommand{\N}{\mathbb{N}}
\newcommand{\R}{\mathbb{R}}
\newcommand{\T}{\mathbb{T}}
\newcommand{\Z}{\mathbb{Z}}
\newcommand{\Q}{\mathbb{Q}}
\newcommand{\C}{\mathbb{C}}
\newcommand{\cLL}{\mathcal{L}}
\begin{document}

\title{On the classification of\\finite-dimensional linear flows}

\author{Arno Berger and Anthony Wynne\\Mathematical and Statistical
Sciences\\University of Alberta\\Edmonton, Alberta, {\sc Canada}}

\maketitle
\begin{abstract}
\noindent
New elementary, self-contained proofs are presented for the
topological and the smooth classification theorems of linear flows on
finite-dimensional normed spaces. The arguments, and the examples that
accompany them, highlight the fundamental roles of linearity and
smoothness more clearly than does the existing literature.
\end{abstract}
\hspace*{6.6mm}{\small {\bf Keywords.} Linear flow, flow equivalence,
  orbit equivalence, (uniform) core, Kronecker flow.}

\noindent
\hspace*{6.6mm}{\small {\bf MSC2010.} 34A30, 34C41, 37C15.}

\medskip

\section{Introduction}

Let $X$ be a finite-dimensional normed space over $\R$ and $\varphi$ a 
flow on $X$, i.e., $\varphi: \R \times X \to X$ is continuous, with $\varphi \bigl( t , \varphi(s ,x)\bigr) = \varphi(t+s
,x)$ and $\varphi(0,x) = x$ for all $t,s \in \R$ and $x\in X$. A fundamental
problem throughout dynamics is to decide precisely which flows are, in
some sense, {\em essentially the same}. Formally, call two
smooth flows $\varphi, \psi$ on $X, Y$ respectively
$C^{\ell}$-{\bf orbit equivalent}, with $\ell \in \{0,1,\ldots, \infty\}$, if
there exists a $C^{\ell}$-diffeomorphism (or homeomorphism, in case
$\ell =0$) $h:X\to Y$ and a function $\tau : \R \times X \to \R$, with $\tau (\,
\cdot \, , x)$ strictly increasing for each $x\in X$, such that
\begin{equation}\label{eq1}
h\bigl( \varphi(t,x) \bigr) = \psi \bigl( \tau (t,x), h(x)\bigr) \quad
\forall (t,x) \in \R\times X \, .
\end{equation}
If $\tau$ in (\ref{eq1}) can be
chosen to be independent of $x$, and thus simply $\tau(t,x) =
\alpha t$ with some $\alpha \in \R^+$, then $\varphi, \psi$ are $C^{\ell}$-{\bf flow equivalent}; they are {\bf linearly} (orbit or flow)
{\bf equivalent} if $h(x) = H x$ with some linear isomorphism $H :X
\to Y$. Notice that these definitions are tailor-made for the present
article and differ somewhat from terminology in the
literature which, however, is itself not completely unified. Usage
herein of terminology pertaining to the equivalence of flows is
informed by the magisterial text
\cite{KH}, as well as by \cite{Amann, Irwin}. Widely used alternative
terms are ({\bf topologically}) {\bf conjugate} (for {\em flow equivalent}, often understood to include the additional requirement 
that $\alpha = 1$) and ({\bf topologically}) {\bf equivalent} (for {\em orbit
equivalent\/}); see \cite{ArrP, ACK1, CK, DSS, He, KS, KRS,
Meiss, PdM, Perko, R, Wiggins}.

Clearly, linear equivalence implies $C^{\ell}$-equivalence for any
$\ell$, which in turn implies $C^0$-equivalence; also, flow equivalence
implies orbit equivalence. Simple examples show that none of these implications can
be reversed in general, not even when $\mbox{\rm dim}\, X =1$, though the
latter case is somewhat special in that $C^0$-orbit equivalence does imply
$C^0$-flow equivalence. In any case, however, it turns out that all
such examples must involve non-linear flows. In fact, 
the main theme of this article is that for linear flows all the {\em infinitely many\/}
different notions of equivalence do coalesce, rather amazingly, into just {\em two\/} 
notions; see Figures \ref{fig1} and \ref{fig2}.

\begin{figure}[ht] 
\psfrag{tl}[]{\large {\sf arbitrary flows}}
\psfrag{t11}[]{linearly flow equivalent}
\psfrag{t1m}[]{\large $\Longrightarrow$}
\psfrag{t2m}[]{($\! \Longleftarrow \, \mbox{\rm dim}\, X \!\! =\!  1$)}
\psfrag{t1v}[]{\large $\Rightarrow$}
\psfrag{t12}[]{linearly orbit equivalent}
\psfrag{t21}[]{$C^{\infty}$-flow equivalent}
\psfrag{t22}[]{$C^{\infty}$-orbit equivalent}
\psfrag{t31}[]{$C^{\ell}$-flow equivalent}
\psfrag{t32}[]{$C^{\ell}$-orbit equivalent}
\psfrag{t41}[]{$C^1$-flow equivalent}
\psfrag{t42}[]{$C^1$-orbit equivalent}
\psfrag{t51}[]{$C^0$-flow equivalent}
\psfrag{t52}[]{$C^0$-orbit equivalent}
%
%
\begin{center}
\includegraphics{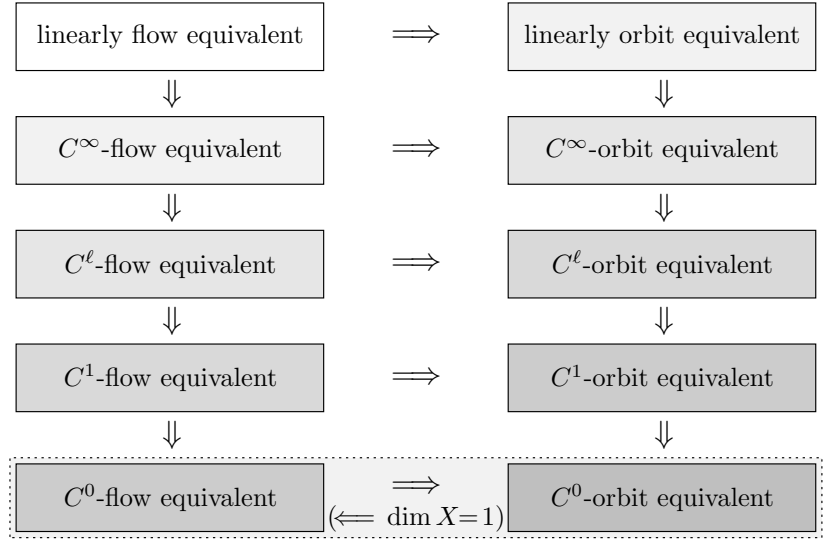}
\end{center}
\caption{Notions of equivalence for flows on normed spaces over $\R$; no conceivable
  implication not shown in the diagram is valid in general.}\label{fig1}
\end{figure}

A flow $\varphi$ on $X$ is {\bf linear} if each homeomorphism (or
time-$t$-map) $\varphi (t, \, \cdot \, ):X\to X$ is linear, or equivalently if $\varphi (t, \, \cdot \, ) = e^{tA^{\varphi}}$ for every $t\in \R$, with a
(unique) linear operator $A^{\varphi}:X\to X$, referred to as the {\bf
  generator} of $\varphi$. Thus a linear flow simply encodes the totality
of all solutions of the linear differential equation $\dot
x = A^{\varphi} x$ on $X$, in that $\varphi(\, \cdot \, , x_0)$ is the
unique solution of that equation satisfying $x(0)=x_0$.
To emphasize the fundamental role played by linearity in all that
follows, linear flows are henceforth denoted exclusively by upper-case
Greek letters $\Phi, \Psi$ etc. 

For linear flows, the weakest form of equivalence,
$C^0$-orbit equivalence, implies the seemingly much stronger $C^0$-flow
equivalence, and both properties can be characterized neatly in terms of linear
algebra. To state the following {\bf topological
  classification theorem}, the main topic of this article, recall
that  every linear flow $\Phi$ on $X$ uniquely
determines a $\Phi$-invariant decomposition $X = X^{\Phi}_{\sf S}
\oplus X^{\Phi}_{\sf C} \oplus
X^{\Phi}_{\sf U}$ into stable, central, and unstable subspaces,
with a corresponding unique decomposition $\Phi \simeq \Phi_{\sf S}
\times \Phi_{\sf C} \times \Phi_{\sf U}$; see Section \ref{sec4} for details.

\begin{theorem}\label{thmB}
Let $\Phi, \Psi$ be linear flows on $X, Y$,
respectively. Then each of the following four statements implies the other three:
\begin{enumerate}
\item $\Phi, \Psi$ are $C^0$-orbit equivalent;
\item $\Phi, \Psi$ are $C^0$-flow equivalent;
\item $\Phi_{\sf S} \times \Phi_{\sf U}, \Psi_{\sf S} \times \Psi_{\sf
  U}$ are $C^0$-flow equivalent, and $\Phi_{\sf C}, \Psi_{\sf C}$ are linearly flow equivalent;
\item $\mbox{\rm dim}\, X^{\Phi}_{\sf S} = \mbox{\rm dim}\,
  Y^{\Psi}_{\sf S}$, $\mbox{\rm dim}\, X^{\Phi}_{\sf U} = \mbox{\rm
    dim}\, Y^{\Psi}_{\sf U}$, and $A^{\Phi_{\sf C}}, \alpha
  A^{\Psi_{\sf C}}$ are similar for some $\alpha\in \R^+$, i.e., $H
  A^{\Phi_{\sf C}}  = \alpha A^{\Psi_{\sf C}}H$
  with some linear isomorphism $H :X^{\Phi}_{\sf C}\to Y^{\Psi}_{\sf C}$.
\end{enumerate}
\end{theorem}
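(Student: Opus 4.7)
The strategy is to close the cycle (i)$\,\Rightarrow\,$(iv)$\,\Rightarrow\,$(iii)$\,\Rightarrow\,$(ii)$\,\Rightarrow\,$(i). Of these, (ii)$\,\Rightarrow\,$(i) is immediate from the definitions, and (iii)$\,\Rightarrow\,$(ii) amounts to combining the two partial equivalences into the product homeomorphism $h_{\sf S}\times h_{\sf C}\times h_{\sf U}$ on $X=X_{\sf S}^\Phi\oplus X_{\sf C}^\Phi\oplus X_{\sf U}^\Phi$. The only delicate point is aligning the time-scaling factors across the three summands, which I would handle via the flexibility available on the hyperbolic part: any two linear contractions (respectively, expansions) of equal dimension are $C^0$-flow equivalent with an arbitrarily prescribed $\alpha\in\R^+$, a Kuiper-type fact presumably developed earlier in the paper through an adapted norm making the contraction strictly norm-decreasing and using the unit sphere as a global cross section.

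For (iv)$\,\Rightarrow\,$(iii), the central part is essentially algebra: exponentiating $HA^{\Phi_{\sf C}}=\alpha A^{\Psi_{\sf C}}H$ gives $He^{tA^{\Phi_{\sf C}}}=e^{\alpha tA^{\Psi_{\sf C}}}H$ for every $t\in\R$, i.e., a linear flow equivalence with factor $\alpha$. For the hyperbolic part, equality of the stable and unstable dimensions, combined with the Kuiper-type lemma just mentioned, yields the required $C^0$-flow equivalence of $\Phi_{\sf S}\times\Phi_{\sf U}$ with $\Psi_{\sf S}\times\Psi_{\sf U}$.

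The core of the theorem is (i)$\,\Rightarrow\,$(iv). Given an orbit equivalence $(h,\tau)$, the singleton $\varphi$-orbit $\{0\}$ must map to the singleton $\psi$-orbit $\{h(0)\}$, so $h(0)$ is a rest point of $\psi$. Because $\tau(\,\cdot\,,x)$ is strictly monotone, orbit equivalence preserves both forward and backward asymptotics, so
\[
h\bigl(X_{\sf S}^\Phi\bigr)=h(0)+Y_{\sf S}^\Psi,\qquad h\bigl(X_{\sf U}^\Phi\bigr)=h(0)+Y_{\sf U}^\Psi,
\]
and invariance of domain then delivers the dimensional equalities $\dim X_{\sf S}^\Phi=\dim Y_{\sf S}^\Psi$ and $\dim X_{\sf U}^\Phi=\dim Y_{\sf U}^\Psi$. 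Restricting $h$ (after translating by $-h(0)$) produces a $C^0$-orbit equivalence between the central flows $\Phi_{\sf C}$ and $\Psi_{\sf C}$, reducing the theorem to the claim that two linear flows with purely imaginary spectrum can be $C^0$-orbit equivalent only when their generators are similar up to a positive scalar factor $\alpha$.

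This last reduction is the principal obstacle. On the semisimple part of $X_{\sf C}^\Phi$, the flow $\Phi_{\sf C}$ is a direct sum of planar rotations and a rest-point subspace, and each orbit closure is an affine torus on which $\Phi_{\sf C}$ restricts to a Kronecker flow; the dimensions of these tori and the rational dependencies among the rotation frequencies are orbit-equivalence invariants, and I would argue that transporting them through $h$ forces the sets of frequencies of $\Phi_{\sf C}$ and $\Psi_{\sf C}$ to be proportional by a single positive real factor $\alpha$, delivering the required intertwining $H$. Non-semisimple (nilpotent) blocks of $A^{\Phi_{\sf C}}$ contribute orbits that grow polynomially in $t$, and their Jordan structure would need to be recovered from the polynomial growth rates along individual orbits, which are likewise preserved by $h$ up to the same factor $\alpha$. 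Carrying out this analysis cleanly, presumably on the foundation of a Kronecker-flow classification developed earlier in the paper, is where the real work lies.
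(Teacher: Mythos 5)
Your cycle (i)$\Rightarrow$(iv)$\Rightarrow$(iii)$\Rightarrow$(ii)$\Rightarrow$(i) matches the paper's, and your handling of (ii)$\Rightarrow$(i), (iii)$\Rightarrow$(ii), and (iv)$\Rightarrow$(iii) is essentially correct and in the spirit of the paper: the hyperbolic part is conjugated by an adapted-norm construction that admits an arbitrary time-scaling factor $\alpha$, and the central part is handled by exponentiating the intertwining relation. The crux is (i)$\Rightarrow$(iv), and there your proposal has two genuine gaps.

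First, you claim that (after normalizing $h(0)=0$) $h$ carries $X_{\sf C}^\Phi$ onto $Y_{\sf C}^\Psi$, so that restricting $h$ reduces the problem to central flows. This step fails: while the stable and unstable sets are topologically characterized by asymptotics and hence preserved, the central space is \emph{not} invariant under a general $C^0$-orbit equivalence. The paper explicitly warns of this right before the proof of Theorem \ref{thmB} (``whereas it is possible that $h(X_{\sf C}^{\Phi})\ne Y_{\sf C}^{\Psi}$''). To circumvent it, the paper replaces the central space by the iterated cores $C^{\epsilon(n)}(\Phi,X)$, which are $\Phi$-invariant subspaces contained in $X_{\sf C}^{\Phi}$ (Theorem \ref{thm3xx}) that \emph{are} preserved under orbit equivalence (Lemma \ref{lem35xx}). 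No reduction to the restricted flow on $X_{\sf C}^{\Phi}$ is needed or available.

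Second, your plan to recover the nilpotent Jordan structure from ``polynomial growth rates along individual orbits, which are likewise preserved by $h$ up to the same factor $\alpha$'' does not go through. Under mere orbit equivalence, the time-reparametrization $\tau_x$ is only assumed to be strictly increasing; it can distort any growth-rate comparison between $\Phi_t x$ and $\Psi_{\tau_x(t)} h(x)$. The paper instead reads off the Jordan block sizes from purely dimensional data: equation (\ref{eq3p6}) shows that $\dim C^{\epsilon(n)}(\Phi,X)$ counts, for each $n$, the number of blocks of size $>n$ (resp.\ $>2n$) with eigenvalue $0$ (resp.\ non-zero imaginary), and these dimensions are orbit-equivalence invariants. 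Finally, while you rightly identify the Kronecker-flow analysis of the semisimple central part as where the real work lies, the subtlety there is not just proportionality of frequencies within a single torus but establishing that the proportionality factor $\alpha$ is the \emph{same} across all rational classes generated by $\sigma(\Phi_{\sf C})$; this is the content of Lemma \ref{lem4ny}, whose proof via degree-theoretic analysis of maps of $\T$ occupies several pages and is nowhere sketched in your proposal.
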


\noindent
In the presence of smoothness, i.e., for
$C^{\ell}$-equivalence with $\ell \ge 1$, the counterpart of Theorem \ref{thmB}
is the following {\bf smooth classification theorem} which shows that in fact the weakest notion ($C^1$-orbit equivalence) implies
the strongest (linear flow equivalence).

\begin{theorem}\label{thmA}
Let $\Phi, \Psi$ be linear flows. Then each of the following four statements implies the other three:
\begin{enumerate}
\item $\Phi, \Psi$ are $C^1$-orbit equivalent;
\item $\Phi, \Psi$ are $C^1$-flow equivalent;
\item $\Phi, \Psi$ are linearly flow equivalent;
\item $A^{\Phi}, \alpha  A^{\Psi}$ are similar for some $\alpha \in \R^+$.
\end{enumerate}
\end{theorem}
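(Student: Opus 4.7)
My strategy is to establish the cycle (iv)$\Rightarrow$(iii)$\Rightarrow$(ii)$\Rightarrow$(i)$\Rightarrow$(iv). The implication (iv)$\Rightarrow$(iii) is a routine exponential-series calculation: from $HA^{\Phi} = \alpha A^{\Psi}H$ with $H$ a linear isomorphism, induction yields $H(A^{\Phi})^{n} = \alpha^{n}(A^{\Psi})^{n}H$, so term-by-term summation gives $He^{tA^{\Phi}} = e^{\alpha tA^{\Psi}}H$ for every $t\in\R$, which is precisely linear flow equivalence with $\tau(t,x) = \alpha t$. The implications (iii)$\Rightarrow$(ii) and (ii)$\Rightarrow$(i) are immediate from the definitions, so the substantive content is (i)$\Rightarrow$(iv).

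Given a $C^{1}$-orbit equivalence $(h, \tau)$, I would first arrange $h(0) = 0$. Since $\{0\}$ is an orbit of $\Phi$ and orbit equivalences take orbits bijectively to orbits, $h(0)$ lies on a trivial orbit of $\Psi$; strict monotonicity of $\tau(\cdot, 0)$ rules out a non-trivial periodic orbit, hence $h(0) \in \ker A^{\Psi}$. Replacing $h$ by $x \mapsto h(x) - h(0)$ -- which inherits the orbit-equivalence property thanks to linearity of $\Psi$ and the fixed-point property of $h(0)$ -- one may assume $h(0) = 0$. Using linearity of $\Psi$ to rewrite the orbit-equivalence identity as $h(e^{tA^{\Phi}}x) = e^{\tau(t,x)A^{\Psi}}h(x)$ and Fr\'echet-differentiating both sides in $x$ at $x = 0$ yields, with $H := Dh(0)$ and $\sigma(t) := \tau(t, 0)$,
\begin{equation*}
He^{tA^{\Phi}} = e^{\sigma(t)A^{\Psi}}H, \qquad t\in\R .
\end{equation*}
On the right the partial derivative of $e^{\tau(t,x)A^{\Psi}}$ in $x$ contributes nothing because it multiplies the factor $h(0)=0$, so the argument needs only that $\tau(t, \cdot)$ is continuous at $0$, which follows from the orbit-equivalence equation together with continuity of $h$ and $h^{-1}$ near $0$. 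Because $h$ is a $C^{1}$-diffeomorphism $H$ is invertible, so $t \mapsto He^{tA^{\Phi}}H^{-1} = e^{\sigma(t)A^{\Psi}}$ is a continuous one-parameter subgroup of $GL(Y)$, and $\sigma$ is continuous and strictly increasing with $\sigma(0) = 0$.

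The main obstacle is to upgrade $\sigma$ to a linear function $\sigma(t) = \alpha t$. From the group identity $e^{\sigma(t+s)A^{\Psi}} = e^{(\sigma(t) + \sigma(s))A^{\Psi}}$, the difference $\delta(t,s) := \sigma(t+s) - \sigma(t) - \sigma(s)$ takes values in $K := \{u \in \R : e^{uA^{\Psi}} = I\}$. In the nontrivial case $A^{\Psi} \neq 0$, $K$ is a discrete subgroup of $\R$ (either $\{0\}$ or $T\Z$ for some $T>0$), so the continuous map $\delta : \R^{2} \to K$ must be constant on the connected domain $\R^{2}$; since $\delta(t, 0) = 0$, this constant vanishes, making $\sigma$ additive. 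A continuous, strictly increasing, additive map on $\R$ is necessarily $\sigma(t) = \alpha t$ with $\alpha > 0$. The degenerate case $A^{\Psi} = 0$ is handled separately: it forces $\Psi_{t} = \mathrm{id}$ for all $t$, hence $h \circ \Phi_{t} = h$, hence $\Phi_{t} = \mathrm{id}$ (as $h$ is bijective), so $A^{\Phi} = 0$ and (iv) holds trivially. Differentiating $He^{tA^{\Phi}}H^{-1} = e^{\alpha tA^{\Psi}}$ at $t=0$ finally delivers $HA^{\Phi} = \alpha A^{\Psi}H$, i.e., (iv).
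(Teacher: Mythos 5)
The decisive step in your proof of (i)$\Rightarrow$(iv) is the claim that $\tau(t,\cdot)$ is continuous at $x=0$, from which you extract $He^{tA^{\Phi}} = e^{\sigma(t)A^{\Psi}}H$ with $\sigma(t)=\tau(t,0)$. This claim is not justified, and it is exactly the crux of the matter. First, $\tau(t,0)$ is essentially free: since $0$ is a fixed point of $\Psi$, the equation $h(\Phi_t 0)=\Psi_{\tau(t,0)}h(0)$ reduces to $0=0$, so you may redefine $\tau(t,0)$ arbitrarily without disturbing $(h,\tau)$-relatedness; for your formula to be meaningful you would need the \emph{limit} of $\tau(t,x)$ as $x\to 0$ to exist. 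Second, that limit need not exist: near $0$ the flow $\Psi$ can have periodic orbits, and on any periodic orbit $\tau_x$ is only determined modulo the period, so one can choose $(t_n)$ and $x_n\to 0$ with $\tau_{x_n}(t)\to+\infty$ while still satisfying (\ref{eq1}). The paper makes exactly this point (Section \ref{sec4}, discussion after the proofs): the textbook argument by differentiation works for $C^1$-\emph{flow} equivalence, but when $\tau$ may depend on $x$ in a ``potentially very rough way,'' direct differentiation in $x$ is not available. Your remark that continuity of $\tau(t,\cdot)$ at $0$ follows from continuity of $h,h^{-1}$ is therefore not correct, and the derivative of the right-hand side of $h(e^{tA^{\Phi}}x)=e^{\tau(t,x)A^{\Psi}}h(x)$ at $x=0$ cannot be computed the way you propose.

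There is also a second, structural obstacle even if the differentiation could be carried out: the paper observes that $H=D_0h$ need not map $X_{\sf C}^{\Phi}$ onto $Y_{\sf C}^{\Psi}$, whereas the identity $HA^{\Phi}=\alpha A^{\Psi}H$ forces $HX_{\sf C}^{\Phi}=Y_{\sf C}^{\Psi}$. So $H$ is not always the correct similarity witness, and the proof cannot terminate by exhibiting $H$. The paper's argument circumvents both difficulties by (a) choosing $x\in X_{\sf HB}\setminus X_{\sf C}^{\Phi}$ (where $X_{\sf HB}=X_{\sf S}^{\Phi}\oplus\mbox{\rm Bnd}\,\Phi\oplus X_{\sf U}^{\Phi}$) and using the stable/unstable projections $Q_{\sf S},Q_{\sf U}$ to force $\liminf_{\varepsilon\downarrow 0}\tau_{\varepsilon x}(t)$ to be finite, then passing to subsequences to get $He^{tA}x=e^{\rho_0(t,x)B}Hx$; (b) invoking Proposition \ref{prop4a1} to upgrade pointwise linear dependence to $HAx=\alpha BHx$ on all of $X_{\sf HB}$; and (c) when $\mbox{\rm Bnd}\,\Phi\ne X_{\sf C}^{\Phi}$, patching $H$ (on the hyperbolic part) with a separate isomorphism $K$ supplied by Theorem \ref{thmB} (on the central part), using Proposition \ref{prop4a2} to reconcile the two scaling constants. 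None of this machinery appears in your proposal, and it is precisely what Theorem \ref{thmA} requires beyond the flow-equivalent special case. The remainder of your argument (the additive-cocycle trick to show $\sigma(t)=\alpha t$, the trivial implications (iv)$\Rightarrow$(iii)$\Rightarrow$(ii)$\Rightarrow$(i)) is fine, but the heart of (i)$\Rightarrow$(iv) is missing.
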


\noindent
Taken together, Theorems \ref{thmB} and \ref{thmA} reveal a
remarkable rigidity of finite-dimensional real linear flows: For such flows, there really are only two different notions of
equivalence, informally referred to as {\bf topological} and {\bf
  smooth} equivalence; for central or
one-dimensional flows, even these two notions
coalesce. Moreover, the theorems characterize these equivalences in terms
of elementary properties of the associated generators.

\begin{figure}[ht] 
\psfrag{tl1}[]{{\bf ``smooth''}}
\psfrag{tl1a}[]{(Theorem \ref{thmA})}
\psfrag{tl2}[]{{\bf ``topological''}}
\psfrag{tl3}[]{(Theorem \ref{thmB})}
\psfrag{t11}[]{linearly flow equivalent}
\psfrag{t1m}[]{\large $\Longleftrightarrow$}
\psfrag{t1v}[]{\large $\Leftrightarrow$}
\psfrag{t2v}[]{\Large $\Rightarrow$}
\psfrag{t3v}[l]{($\Uparrow$ $X_{\sf C}^{\Phi}=X$ or
  $\mbox{\rm dim} \, X \! =\! 1$)}
\psfrag{t12}[]{linearly orbit equivalent}
\psfrag{t21}[]{$C^{\infty}$-flow equivalent}
\psfrag{t22}[]{$C^{\infty}$-orbit equivalent}
\psfrag{t31}[]{$C^{\ell}$-flow equivalent}
\psfrag{t32}[]{$C^{\ell}$-orbit equivalent}
\psfrag{t41}[]{$C^1$-flow equivalent}
\psfrag{t42}[]{$C^1$-orbit equivalent}
\psfrag{t51}[]{$C^0$-flow equivalent}
\psfrag{t52}[]{$C^0$-orbit equivalent}
%
%
\begin{center}
\includegraphics{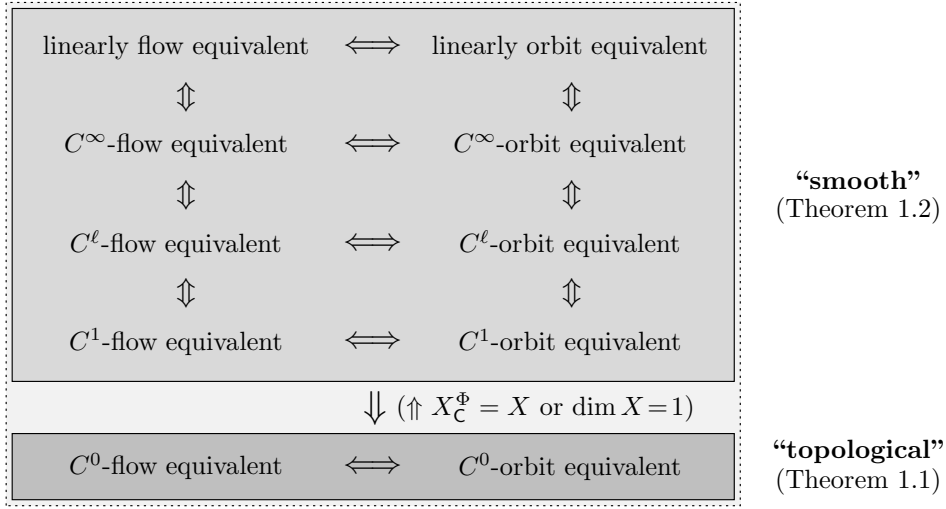}
\end{center}
\caption{By Theorems \ref{thmB} and \ref{thmA}, for real linear flows all notions of equivalence coalesce
  into only two different notions, or even just one if
  $X_{\sf C}^{\Phi} = X $ or $\mbox{\rm dim}\, X =1$.}\label{fig2}
\end{figure}

As far as the authors have been able to ascertain, variants of Theorem \ref{thmB} were first proved,
independently, in \cite{Kuiper} and \cite{Ladis}, though of course for
hyperbolic linear flows the result dates back much further (see, e.g.,
\cite{Amann, Arnold, Irwin}; a detailed discussion of the pertinent
literature is deferred to Section \ref{sec4} when all relevant
technical terms will have been introduced). Given
the clear, definitive nature of Theorem \ref{thmB} and the
fundamental importance of linear differential equations
throughout science, it is striking that the details of \cite{Kuiper, Ladis} have not been disseminated more widely in over four
decades \cite{He}. A main objective of this article, then, is to provide an
elementary, self-contained proof of Theorem \ref{thmB} that hopefully will find its way into future
textbooks on differential equations. In the process, several
inaccuracies and gaps in the classical arguments are addressed as well.
As presented here, Theorem \ref{thmA} is a rather
straightforward consequence of Theorem
\ref{thmB}. Although the result itself seems to have long been part of
dynamical systems folklore \cite{Amann, Arnold, ACK1, CK, Meiss, Wiggins}, the authors are not aware of any reference that
would establish it in its full strength, that is, without imposing
additional (and, as it turns out, unnecessary) assumptions on $\tau$.

To appreciate the difference between Theorem \ref{thmB} and
\ref{thmA}, first note that for $\mbox{\rm dim}\, X = 1$, trivially {\em all\/} notions of
equivalence coincide, yielding exactly three equivalence
classes of real linear flows, represented by $\Phi(t,x) = e^{t a }x$
with $a \in \{-1,0,1\}$. However, already for $\mbox{\rm dim}\, X = 2$ the huge
difference between the theorems becomes apparent: On the one hand, by Theorem \ref{thmB},
there are exactly eight topological ($C^0$) equivalence classes, represented by $\Phi(t,x) = e^{tA}x$ with $A$ being
precisely one of
\begin{equation}\label{eq3}
\left[\begin{array}{cr}
0 & -1 \\1 & 0 
\end{array}
\right] , \enspace
\left[\begin{array}{cc}
0 & 0 \\0 & 0 
\end{array}
\right]  , \enspace
\left[\begin{array}{cc}
0 & 1 \\0 & 0 
\end{array}
\right] , \enspace
\pm \left[\begin{array}{cc}
0 & 0 \\0 & 1 
\end{array}
\right]  ,  \enspace
\left[\begin{array}{cr}
1 & 0 \\0 & -1 
\end{array}
\right] , \enspace
\pm \left[\begin{array}{cc}
1 & 0 \\0 & 1
\end{array}
\right] .
\end{equation}
By Theorem \ref{thmA}, on the other hand, all smooth ($C^1$)
equivalence classes are represented uniquely by the five
left-most matrices in (\ref{eq3}), together with $\pm \left[\begin{array}{cc}
1 & 1 \\0 & 1 \end{array} \right] $ and the five infinite families
$$
\left[\begin{array}{rc}
-1 & 0 \\0 &  a
\end{array}
\right] , \enspace
\pm \left[\begin{array}{cc}
1 & 0 \\0 &  a
\end{array}
\right] ,\enspace
\pm \left[\begin{array}{cr}
1 & -  a \\  a  & 1 
\end{array}
\right] 
\quad
( a \in \R^+ ) \, .
$$

This article is organized as follows. Section \ref{secorb} briefly
reviews the notions of equivalence for flows, as well as a few basic
dynamical concepts. It then introduces {\bf cores}, a new family of invariant objects. Although these objects may
well be useful in more general contexts, their properties
are established here only as far as needed for the subsequent
analysis of flows on finite-dimensional normed spaces. Section
\ref{sec2} specifically identifies cores for real linear flows, and
shows how they can be iterated in a natural way. As it turns out, the
proof of Theorem \ref{thmB} also hinges on a careful analysis of bounded
linear flows, and the latter is carried out in Section \ref{sec3}. With all required
tools finally assembled, proofs of Theorems \ref{thmB} and \ref{thmA}
are presented in Section \ref{sec4}, together with several comments
on related results in the literature that prompted this work.
While, for reasons that will become apparent in Section \ref{secom},
the article focuses mostly on {\em real\/} spaces, the concluding
section shows how the results carry over to {\em complex\/} spaces
in a natural way. To keep the exposition focussed
squarely on the main arguments, several elementary (and, presumably, known) facts of an
auxiliary nature are stated without proof; for details regarding these facts, as well as others
that are mentioned in passing but for which the authors were unable to
identify a precise reference, the interested reader is referred to the
accompanying document \cite{Wsupp}.

Throughout, the familiar symbols $\N, \N_0,\Z, \Q, \R^+, \R$, and $\C$
denote the sets of all positive integers, non-negative integers,
integers, rational, positive real, real, and complex numbers,
respectively; for convenience, $c+\Omega = \{c+\omega : \omega \in
\Omega\}$ and $c\Omega = \{c\omega : \omega \in \Omega\}$ for any
$c\in \C$, $\Omega \subset \C$. Occasionally, for the purpose of
coordinate-dependent arguments, elements of $\Z^m$, $\R^m$, or
$\C^m$, with $m\in \N\setminus\{1\}$, are interpreted as $m\times
1$-column vectors.

\section{Orbit equivalence}\label{secorb}

Let $X, Y$ be two finite-dimensional normed spaces over $\R$, and let
$\varphi, \psi$, respectively, be flows on them; unless specified
further, $\| \cdot  \|$ denotes
any norm on either space. Given two functions
$h:X\to Y$ and $\tau : \R \times X \to \R$, say that $\varphi$ is
$(h,\tau)$-{\bf related} to $\psi$ if $h$ is a homeomorphism, $\tau
(\, \cdot \, , x)$ is strictly increasing for each $x\in X$, and
\begin{equation*}
h \bigl( \varphi (t, x)\bigr) = \psi \bigl( \tau (t,x) ,  h(x)\bigr)
\quad \forall (t,x) \in \R \times X \, . \tag{\ref{eq1}}
\end{equation*}
In what follows, for each $t\in \R$ the homeomorphism $\varphi (t, \,
\cdot \, ):X\to X$ usually is denoted $\varphi_t$, and for each $x\in
\R$ the strictly increasing map $\tau (\, \cdot \, , x):\R \to \R$ is denoted
$\tau_x$. With this, (\ref{eq1}) succinctly reads
$$
h\circ \varphi_t (x) = \psi_{\tau_x (t)} \circ h (x) \quad \forall
(t,x)\in \R \times X \, .
$$
Thus $\varphi$ is $(h,\tau)$-related to $\psi$ precisely if
the homeomorphism $h$ maps each $\varphi$-orbit into a $\psi$-orbit in
an orientation-preserving way. Note that no assumption whatsoever is
made regarding the $x$-dependence of $\tau_x$. Still, utilizing the
flow axioms of $\varphi, \psi$, and the continuity of $h,h^{-1}$,
it is readily deduced from (\ref{eq1}) that the function $\tau$ can be assumed to have several
additional properties; cf.\ \cite{ArrP, ACK1, PdM}. For
convenience, these properties are understood to be part of what it means for $\varphi$ to be
$(h,\tau)$-related to $\psi$ throughout the remainder of this article.

\begin{prop}\label{prop22}
Let $\varphi, \psi$ be flows on $X, Y$, respectively, and assume
that $\varphi$ is $(h,\tau)$-related to $\psi$. Then $\varphi$ is
$(h,\widetilde{\tau}\, )$-related to $\psi$ where $\widetilde{\tau}_x:\R\to \R$ is,
for every $x\in X$, an (increasing) continuous bijection with $\widetilde{\tau}_x (0)=0$.
\end{prop}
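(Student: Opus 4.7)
The plan is to modify $\tau$ in two stages: first shift so that $\widetilde{\tau}_x(0)=0$, then adjust, case by case on the orbit structure of $x$, so that each $\widetilde{\tau}_x$ becomes a continuous bijection of $\R$.

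For the first stage, I set $\widetilde{\tau}(t,x) := \tau(t,x) - \tau(0,x)$. Evaluating (\ref{eq1}) at $t=0$ and using $\varphi(0,x)=x$ gives $\psi\bigl(\tau(0,x),h(x)\bigr) = h(x)$, so $\tau(0,x)$ lies in the $\psi$-isotropy group of $h(x)$ and may be subtracted without altering (\ref{eq1}). Strict monotonicity of $\widetilde{\tau}_x$ in $t$ is preserved, and $\widetilde{\tau}_x(0)=0$ for every $x$.

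For the second stage, since $\widetilde{\tau}_x$ is strictly increasing, its only possible failures of continuity are jumps and its only possible failures of surjectivity are finite one-sided limits. At a putative jump $t_0$ with one-sided limits $a<b$, continuity of $h\circ\varphi$ and $\psi$ forces $\psi_a(h(x)) = h(\varphi_{t_0}(x)) = \psi_b(h(x))$, so $b-a$ lies in the $\psi$-isotropy of $h(x)$. A symmetric argument using $h^{-1}$ shows that $x$ is $\varphi$-fixed iff $h(x)$ is $\psi$-fixed, and that if the $\varphi$-orbit of $x$ is aperiodic then $\widetilde{\tau}_x(t) \to \pm\infty$ as $t\to\pm\infty$ (a finite limit would force $\varphi_t(x)$ to converge to a $\varphi$-fixed point $x_0$ with $h(x_0)$ on the orbit of $h(x)$, forcing in turn $x=x_0$). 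I then split into three cases. If $x$ is $\varphi$-fixed, (\ref{eq1}) places no constraint on $\widetilde{\tau}_x$, so I set $\widetilde{\tau}_x(t):=t$. If the $\varphi$-orbit of $x$ is aperiodic, then $h(x)$ has trivial $\psi$-isotropy, jumps are excluded, and surjectivity was just established, so $\widetilde{\tau}_x$ is already a continuous bijection. In the periodic case (with $\varphi$-period $T_x$ and minimal $\psi$-period $T$ of $h(x)$), the continuous $T_x$-periodic map $t\mapsto h(\varphi_t(x))$ admits a unique continuous lift $f:\R\to\R$ through the covering $s\mapsto \psi_s(h(x))$ with $f(0)=0$; I redefine $\widetilde{\tau}_x:=f$ and verify that $f$ is strictly monotonic (no plateau, else $x$ would be $\varphi$-fixed) and surjective (since $f(t+T_x)-f(t) = kT$ for some $k\in\N$).

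The main obstacle is the periodic case: showing that the canonical continuous lift $f$ is strictly monotonic. The key input is that the strict monotonicity of the original $\tau_x$ forces $h$ restricted to the $\varphi$-orbit of $x$ to be an orientation-preserving circle homeomorphism, so the curve $t\mapsto h(\varphi_t(x))$ cannot reverse direction on the $\psi$-orbit circle of $h(x)$; with this in hand, continuity, strict monotonicity, and surjectivity of $f$ follow in a short sequence of observations.
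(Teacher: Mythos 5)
Your proof is correct and complete. The paper itself defers the proof of Proposition \ref{prop22} to the supplementary document \cite{Wsupp}, so a direct comparison is not possible, but your case decomposition by orbit type (fixed point, aperiodic orbit, periodic orbit) is the natural route, and each case is handled soundly: the shift in stage one is justified because $\tau(0,x)$ lies in the $\psi$-isotropy of $h(x)$; in the aperiodic case you correctly rule out both finite limits at $\pm\infty$ (via the fixed-point forcing argument) and jumps (via trivial isotropy of $h(x)$); and in the periodic case the passage to the continuous lift $f$ through the covering $s\mapsto\psi_s(h(x))$ is exactly what is needed. Two points are left compressed that deserve a line each if you write this up in full: first, that the isotropy of $h(x)$ is indeed a nontrivial proper closed subgroup $T\Z$ when $x\in\mbox{\rm Per}\,\varphi\setminus\mbox{\rm Fix}\,\varphi$ (this follows from $\tau_x(t+T_x^\varphi)-\tau_x(t)$ being a positive element of that isotropy, and from $h(x)\notin\mbox{\rm Fix}\,\psi$); and second, the ``cannot reverse direction'' step, which is cleanest as: $f-\tau_x$ is $T\Z$-valued and continuous at every continuity point of $\tau_x$, hence locally constant there, so $f$ must locally agree with the strictly increasing $\tau_x$ up to an additive constant at such points, ruling out the orientation-reversing alternative. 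With those two clauses made explicit, the argument is airtight.
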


Recall from the Introduction that two flows $\varphi, \psi$ are ($C^0$-){\bf orbit equivalent} if $\varphi$ is $(h,\tau)$-related to $\psi$
for some $h$, $\tau$; they are {\bf flow equivalent} if, with the
appropriate constant $\alpha \in \R^+$, the function $\tau$ can be chosen
so that $\tau_x (t) = \alpha t$ for all $(t,x)\in \R\times X$. This terminology is justified.

\begin{prop}\label{prop23}
Orbit equivalence and flow equivalence are equivalence relations in
the class of all flows on finite-dimensional normed spaces.
\end{prop}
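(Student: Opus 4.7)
The plan is to verify the three defining properties of an equivalence relation---reflexivity, symmetry, and transitivity---separately for orbit equivalence and for flow equivalence, in each case exhibiting an explicit pair $(h,\tau)$ that realizes the required relation.

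Reflexivity is essentially trivial: $\varphi$ is $(\mathrm{id}_X, \tau)$-related to itself with $\tau(t,x) = t$, which also settles flow equivalence with $\alpha = 1$. For symmetry, suppose $\varphi$ is $(h,\tau)$-related to $\psi$. By Proposition~\ref{prop22}, we may assume $\tau_x : \R \to \R$ is a strictly increasing continuous bijection for every $x \in X$, so each $\tau_x$ has a well-defined inverse $\tau_x^{-1}$ that is again a strictly increasing continuous bijection. Setting $h' = h^{-1}$ and, for $y \in Y$ and $s \in \R$,
\[
\tau'_y(s) = \tau_{h^{-1}(y)}^{-1}(s),
\]
a direct substitution into (\ref{eq1}), with $x = h^{-1}(y)$ and $t = \tau'_y(s)$, yields $h^{-1}\circ \psi_s(y) = \varphi_{\tau'_y(s)} \circ h^{-1}(y)$, so $\psi$ is $(h',\tau')$-related to $\varphi$. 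In the flow-equivalent case, where $\tau_x(t) = \alpha t$, one simply has $\tau'_y(s) = s/\alpha$, so flow equivalence is symmetric with constant $1/\alpha \in \R^+$.

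For transitivity, suppose $\varphi$ is $(h_1, \tau^{(1)})$-related to $\psi$ and $\psi$ is $(h_2, \tau^{(2)})$-related to $\chi$. Define $h = h_2 \circ h_1$ and
\[
\tau(t,x) = \tau^{(2)}\bigl(\tau^{(1)}(t,x),\, h_1(x)\bigr).
\]
Then $h$ is a composition of homeomorphisms, hence a homeomorphism, and $\tau_x$ is a composition of strictly increasing maps, hence strictly increasing. Computing $h \circ \varphi_t(x) = h_2\bigl(h_1 \circ \varphi_t(x)\bigr) = h_2\bigl(\psi_{\tau^{(1)}(t,x)} \circ h_1(x)\bigr) = \chi_{\tau(t,x)} \circ h(x)$ verifies (\ref{eq1}). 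If both relations are flow equivalences with constants $\alpha_1, \alpha_2 \in \R^+$, then $\tau(t,x) = \alpha_1 \alpha_2 t$, confirming that flow equivalence is also transitive.

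There is no genuine obstacle; the only point demanding care is the symmetry step, where Proposition~\ref{prop22} is invoked to guarantee that each time-change $\tau_x$ is actually invertible (a strictly increasing map need not be onto in general). Once $\tau_x^{-1}$ is in hand, the remaining verifications reduce to routine substitution into the intertwining identity~(\ref{eq1}).
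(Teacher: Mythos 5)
Your proof is correct, and the point you flag in the final paragraph---that symmetry genuinely requires each $\tau_x$ to be a bijection, for which Proposition~\ref{prop22} is the right tool---is exactly where the care is needed. The paper itself defers the proof of Proposition~\ref{prop23} to the supplementary material \cite{Wsupp}, so there is no in-text argument to compare against, but your reflexivity/symmetry/transitivity verifications with the explicit composed time-changes are the standard and natural route, and they handle both orbit and flow equivalence uniformly.
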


A simple, classical example of orbit equivalence, presented in essence
(though not always in name) by many textbooks, is as follows
\cite[Sec.3.1]{Perko}: Assume that two flows $\varphi,\psi$ on $X$ are
generated by the differential equations $\dot x = V(x)$,
$\dot x = W(x)$, respectively, with $C^{\infty}$-vector fields
$V,W$. If $V=wW$ for some (measurable and locally bounded) function $w:X \to \R^+$ then
$\varphi$ is $({\rm id}_X, \tau)$-related to $\psi$, with $\tau_x(t) =
\int_0^t w\bigl( \varphi_s (x)\bigr)\,  {\rm d}s$ for all $(t,x)\in
\R\times X$.

For every $x\in X$, let $T_x^{\varphi} = \inf \{t\in \R^+  : \varphi_t(x) =
x\}$, with the usual convention that $\inf \varnothing = +\infty$. Note that
whenever the set $\{t\in \R^+ : \varphi_t(x) = x\}$ is non-empty, it equals
either $\R^+$ or $\{n T_x^{\varphi} : n \in \N\}$. In the former case,
$T_x^{\varphi} = 0$, and $x$ is a {\bf fixed point} of $\varphi$. In
the latter case, $0< T_x^{\varphi} < +\infty$, and $x$ is $T$-{\bf
  periodic}, i.e., $\varphi_T(x) = x$ with $T\in \R^+$, precisely for
$T \in T_x^{\varphi} \N$; in particular, $T_x^{\varphi}$ is the
minimal $\varphi$-{\bf period} of $x$. Denote by $\mbox{\rm Fix}\,
\varphi$ and $\mbox{\rm Per}_T \varphi$ the sets of all fixed and
$T$-periodic points respectively, and let $\mbox{\rm Per}\, \varphi =
\bigcup_{T\in \R^+ } \mbox{\rm Per}_T \varphi$. Note that $T_{\, {\bf
    \cdot} }^{\varphi}$ is lower semi-continuous, with $T_x^{\varphi}=0$ and
$T_x^{\varphi} < +\infty$ if and only if $x\in \mbox{\rm
  Fix}\,\varphi$ and $x\in \mbox{\rm Per}\, \varphi$, respectively.

The $\varphi$-{\bf orbit} of any $x\in X$ is $\varphi (\R,
x) = \{\varphi_t (x) : t \in \R\}$. Recall that $C\subset X$ is $\varphi$-{\bf invariant} if
$\varphi_t (C) = C$ for all $t\in \R$, or equivalently if $\varphi(\R, x) \subset
C$ for every $x\in C$. Clearly, $\mbox{\rm Fix}\,
\varphi$ and $\mbox{\rm Per}\, \varphi$ are $\varphi$-invariant, and
so is $\mbox{\rm Per}_T \varphi$ for every $T\in \R^+$. Another example of a
$\varphi$-invariant set is
$\mbox{\rm Bnd}\, \varphi := \left\{  x \in X : \sup\nolimits_{t\in \R}
\| \varphi_t (x)\| <+\infty \right\}$, 
which simply is the union of all bounded $\varphi$-orbits. Plainly,
$$
\mbox{\rm Fix}\, \varphi \subset \mbox{\rm Per}_T \varphi \subset
\mbox{\rm Per}\, \varphi \subset \mbox{\rm Bnd}\, \varphi \quad
\forall T\in \R^+  \, .
$$

\begin{prop}\label{prop24}
Let $\varphi, \psi$ be flows on $X, Y$, respectively, and assume
that $\varphi$ is $(h,\tau)$-related to $\psi$. Then $C \subset X$ is
$\varphi$-invariant if and only if $h(C)\subset Y$ is
$\psi$-invariant. Moreover,
$$
h(\mbox{\rm Fix}\, \varphi) = \mbox{\rm Fix}\, \psi \, , \quad
h(\mbox{\rm Per}\, \varphi) = \mbox{\rm Per}\, \psi \, , \quad
h(\mbox{\rm Bnd}\, \varphi) = \mbox{\rm Bnd}\, \psi \, .
$$
\end{prop}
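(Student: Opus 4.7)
The plan is to begin by invoking Proposition \ref{prop22} to assume that, for every $x\in X$, the reparametrization $\tau_x : \R \to \R$ is a continuous bijection with $\tau_x(0) = 0$. Combined with strict monotonicity, this gives $\tau_x(\R) = \R$ and $\tau_x(t) > 0$ iff $t > 0$. Applying $h$ to the entire $\varphi$-orbit of $x$ and using (\ref{eq1}) then yields the basic orbit-matching identity
$$
h \bigl( \varphi (\R, x) \bigr) = \psi \bigl( \tau_x(\R), h(x) \bigr) = \psi \bigl( \R, h(x) \bigr) \quad \forall x \in X \, ,
$$
from which the entire proposition falls out. I would also record, for the converse halves below, that the bijectivity of each $\tau_x$ makes $\psi$ $(h^{-1},\sigma)$-related to $\varphi$, where $\sigma_y := \tau_{h^{-1}(y)}^{-1}$; this lets every statement be applied symmetrically.

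For the invariance claim, if $C$ is $\varphi$-invariant, the orbit-matching identity gives $\psi(\R, h(x)) = h(\varphi(\R, x)) \subset h(C)$ for each $x \in C$, so $h(C)$ contains every $\psi$-orbit through its points and is therefore $\psi$-invariant. The converse is obtained verbatim after replacing $(\varphi, \psi, h, \tau)$ by $(\psi, \varphi, h^{-1}, \sigma)$.

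For the three distinguished sets, the orbit-matching identity is used as follows. Fixed points correspond exactly to singleton orbits, so $\varphi(\R, x) = \{x\}$ iff $\psi(\R, h(x)) = \{h(x)\}$, giving $h(\mbox{\rm Fix}\, \varphi) = \mbox{\rm Fix}\, \psi$. For periodicity, any $T > 0$ with $\varphi_T(x) = x$ is carried by (\ref{eq1}) to $\psi_{\tau_x(T)}(h(x)) = h(x)$ with $\tau_x(T) > 0$; conversely, any $S > 0$ with $\psi_S(h(x)) = h(x)$ has a unique preimage $T = \tau_x^{-1}(S) > 0$ under $\tau_x$, and injectivity of $h$ then yields $\varphi_T(x) = x$. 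Hence $h(\mbox{\rm Per}\, \varphi) = \mbox{\rm Per}\, \psi$. For boundedness, I would use that $X, Y$ are finite-dimensional, so bounded sets are precisely those with compact closure: if $\varphi(\R, x)$ is bounded, then $h\bigl(\overline{\varphi(\R, x)}\bigr)$ is compact and contains $\psi(\R, h(x))$, which is therefore bounded; the reverse implication follows via continuity of $h^{-1}$.

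There is no genuine obstacle — the argument is bookkeeping once the orbit-matching identity is in hand. The only step that requires a second's thought is the Bnd case, where one must remember that a homeomorphism need not preserve boundedness in general, and that the finite-dimensional Heine–Borel property, together with continuity of $h^{\pm 1}$, is what saves the day.
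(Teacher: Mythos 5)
Your proof is correct and complete. The paper states Proposition~\ref{prop24} without proof, deferring such elementary auxiliary facts to the supplementary document \cite{Wsupp}, so there is no in-paper argument to compare against; but the route you take --- normalize $\tau$ via Proposition~\ref{prop22}, derive the orbit-matching identity $h\bigl(\varphi(\R,x)\bigr)=\psi\bigl(\R,h(x)\bigr)$, note that $\psi$ is $(h^{-1},\sigma)$-related to $\varphi$ with $\sigma_y=\tau_{h^{-1}(y)}^{-1}$, and then read off the four assertions --- is exactly the standard elementary argument. Your remark that the $\mathrm{Bnd}$ case is the only non-trivial one, relying on the finite-dimensional Heine--Borel property because homeomorphisms do not preserve boundedness in general, correctly isolates the one step that is more than bookkeeping.
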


\noindent
A simple observation with far-reaching consequences for the subsequent
analysis is that, under the assumptions of Proposition \ref{prop24}, and for
any $T\in \R^+ $, the $\psi$-invariant set $h(\mbox{\rm Per}_T\varphi)$ may
not be contained in $\mbox{\rm Per}_S \psi$ for any $S\in \R^+$. A
numerical invariant that can be used to address this ``scrambling'' of $\mbox{\rm
  Per}\, \varphi \setminus \mbox{\rm Fix}\, \varphi$ by $h$ is the
$\varphi$-{\bf height} of $x$, defined as
$$
\langle x \rangle^{\varphi} = \limsup\nolimits_{\widetilde{x} \in \mbox{{\scriptsize \rm Per}}\,
  \varphi, \widetilde{x} \to x}
\frac{T_{\widetilde{x}}^{\varphi}}{T_x^{\varphi}} \quad \forall x \in \mbox{\rm
  Per}\, \varphi \setminus \mbox{\rm Fix}\, \varphi \, .
$$
Note that $\langle x \rangle^{\varphi} $ equals either a positive
integer or $+ \infty$, and with $\langle x \rangle^{\varphi}:= +\infty$ for
every $x\in \mbox{\rm Fix}\, \varphi$, the function  $ \langle
\, \cdot \,  \rangle^{\varphi}$ is upper semi-continuous on $\mbox{\rm Per}\,
\varphi$; cf.\ \cite[Def.5]{Ladis}. As is readily confirmed, minimal
periods and heights are well-behaved under orbit equivalence.

\begin{prop}\label{prop25}
Let $\varphi, \psi$ be flows, and assume
that $\varphi$ is $(h,\tau)$-related to $\psi$. Then, for every $x\in \mbox{\rm
  Per}\, \varphi$:
\begin{enumerate}
\item $T_{h(x)}^{\psi} =\tau_x(T_x^{\varphi})$;
\item $\langle h(x)\rangle^{\psi} = \langle x\rangle^{\varphi}$.
\end{enumerate}
\end{prop}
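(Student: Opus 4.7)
For (i), I would unravel (\ref{eq1}) at $t = T_x^{\varphi}$. Since $\varphi_{T_x^{\varphi}}(x) = x$, this yields $\psi_{\tau_x(T_x^{\varphi})}\bigl(h(x)\bigr) = h(x)$, so $\tau_x(T_x^{\varphi}) = m T_{h(x)}^{\psi}$ for some $m \in \N_0$. The case $x \in \mbox{\rm Fix}\,\varphi$ then follows from $\tau_x(0) = 0$ (Proposition \ref{prop22}) and $h(x) \in \mbox{\rm Fix}\,\psi$ (Proposition \ref{prop24}). For $x \in \mbox{\rm Per}\,\varphi \setminus \mbox{\rm Fix}\,\varphi$, if $m \ge 2$ then the intermediate value theorem, applied to the continuous strictly increasing bijection $\tau_x$ with $\tau_x(0) = 0$, produces $t^{\ast} \in (0, T_x^{\varphi})$ with $\tau_x(t^{\ast}) = T_{h(x)}^{\psi}$; inserting $t^{\ast}$ into (\ref{eq1}) and invoking injectivity of $h$ forces $\varphi_{t^{\ast}}(x) = x$, contradicting the minimality of $T_x^{\varphi}$. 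Hence $m = 1$.

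For (ii), the case $x \in \mbox{\rm Fix}\,\varphi$ follows from the convention $\langle x\rangle^{\varphi} = +\infty$ together with Proposition \ref{prop24}, so I would assume $T_x^{\varphi} > 0$. Using Proposition \ref{prop24} and part (i), the limsup defining $\langle h(x)\rangle^{\psi}$ rewrites as a limsup over $\widetilde{x}_k \to x$ in $\mbox{\rm Per}\,\varphi$ of $\tau_{\widetilde{x}_k}(T_{\widetilde{x}_k}^{\varphi})/T_{h(x)}^{\psi}$. A useful preliminary observation is that every finite accumulation value $L$ of $T_{\widetilde{x}_k}^{\varphi}$ along such a sequence lies in $T_x^{\varphi}\N$: indeed $\widetilde{x}_k = \varphi_{T_{\widetilde{x}_k}^{\varphi}}(\widetilde{x}_k) \to \varphi_L(x)$ by joint continuity of $\varphi$, while $\widetilde{x}_k \to x$, so $\varphi_L(x) = x$. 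Thus both heights are suprema over $\N \cup \{+\infty\}$, and by Proposition \ref{prop23} it suffices to show $\langle h(x)\rangle^{\psi} \ge \langle x\rangle^{\varphi}$; the reverse inequality follows by applying the same argument to the inverse orbit equivalence.

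The crux, and what I expect to be the main obstacle, is to match the integer $n = \langle x\rangle^{\varphi}$ on the $\psi$ side. Given $\widetilde{x}_k \to x$ with $T_{\widetilde{x}_k}^{\varphi}/T_x^{\varphi} \to n \in \N$, I would subdivide $[0, T_{\widetilde{x}_k}^{\varphi}]$ into $n$ equal pieces with endpoints $t_j^{(k)} := j T_{\widetilde{x}_k}^{\varphi}/n \to j T_x^{\varphi}$. Joint continuity of $\varphi$ gives $\varphi_{t_j^{(k)}}(\widetilde{x}_k) \to x$, so (\ref{eq1}) yields $\psi_{s_j^{(k)}}\bigl(h(\widetilde{x}_k)\bigr) \to h(x)$ with $s_j^{(k)} := \tau_{\widetilde{x}_k}(t_j^{(k)})$. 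Along a subsequence each $s_j^{(k)}$ converges to some $S_j \in [0, +\infty]$, and in the finite case continuity of $\psi$ plus $T_{h(x)}^{\psi}$-periodicity of $h(x)$ force $S_j = p_j T_{h(x)}^{\psi}$ with $p_j \in \N_0$; monotonicity of $\tau_{\widetilde{x}_k}$ yields $0 = p_0 \le p_1 \le \cdots \le p_n$. The task then reduces to upgrading every $\le$ to a strict $<$. I would argue by contradiction: if $p_j = p_{j+1}$ with both finite, then $s_{j+1}^{(k)} - s_j^{(k)} \to 0$, so uniform continuity of $\psi$ on compacta together with $h(\widetilde{x}_k) \to h(x)$ gives $\psi_s\bigl(h(\widetilde{x}_k)\bigr) \to h(x)$ uniformly in $s \in [s_j^{(k)}, s_{j+1}^{(k)}]$; via (\ref{eq1}) and uniform continuity of $h^{-1}$, this transfers to $\varphi_t(\widetilde{x}_k) \to x$ uniformly in $t \in [t_j^{(k)}, t_{j+1}^{(k)}]$. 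But for any fixed $t^{\ast} \in (jT_x^{\varphi}, (j+1)T_x^{\varphi})$, pointwise continuity of $\varphi$ also gives $\varphi_{t^{\ast}}(\widetilde{x}_k) \to \varphi_{t^{\ast}}(x)$, forcing $\varphi_{t^{\ast}}(x) = x$, which contradicts $t^{\ast} \notin T_x^{\varphi}\N_0$. The case where some $S_j = +\infty$ is easier: monotonicity then forces $S_n = +\infty$ and hence $T_{h(\widetilde{x}_k)}^{\psi}/T_{h(x)}^{\psi} \to +\infty \ge n$. Thus $p_n \ge n$ in all cases, and the case $n = +\infty$ is handled by running the same subdivision with $n$ replaced by arbitrarily large $N$ and diagonalizing.
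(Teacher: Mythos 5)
Your argument is correct; note that the paper itself gives no proof of Proposition \ref{prop25} (it is one of the auxiliary facts deferred to the supplementary document \cite{Wsupp}), so there is no in-text argument to compare against. Part (i) is the standard IVT argument and is fine: after Proposition \ref{prop22} normalizes $\tau_x$ to a continuous increasing bijection with $\tau_x(0)=0$, the identity $h(x)=\psi_{\tau_x(T_x^\varphi)}(h(x))$ together with $\mbox{\rm Fix}$-preservation (Proposition \ref{prop24}) forces $\tau_x(T_x^\varphi)=mT_{h(x)}^\psi$ with $m\in\N$ when $x\notin\mbox{\rm Fix}\,\varphi$, and your IVT step correctly rules out $m\ge 2$ by injectivity of $h$ and minimality of $T_x^\varphi$.

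Part (ii) is the genuinely delicate half, and your proposal handles it soundly. The preliminary observation that finite accumulation values of $T_{\widetilde{x}_k}^\varphi$ lie in $T_x^\varphi\N$ (via joint continuity of $\varphi$ applied to $\widetilde{x}_k=\varphi_{T_{\widetilde{x}_k}^\varphi}(\widetilde{x}_k)\to\varphi_L(x)$, together with lower semi-continuity of $T_{\cdot}^\varphi$ to exclude $L=0$) correctly reduces both heights to values in $\N\cup\{+\infty\}$, and by symmetry of orbit equivalence it suffices to prove one inequality. The subdivision argument is the right idea: taking $\widetilde{x}_k\to x$ realizing $T_{\widetilde{x}_k}^\varphi/T_x^\varphi\to n$, setting $t_j^{(k)}=jT_{\widetilde{x}_k}^\varphi/n\to jT_x^\varphi$ gives $\varphi_{t_j^{(k)}}(\widetilde{x}_k)\to x$ by joint continuity, and after passing to a common subsequence for the finitely many indices $j$, each $s_j^{(k)}=\tau_{\widetilde{x}_k}(t_j^{(k)})$ has a limit $S_j\in[0,+\infty]$ with $S_j\in T_{h(x)}^\psi\N_0$ in the finite case. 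Your strictness argument is where the real work is, and it goes through: $p_j=p_{j+1}$ would give $s_{j+1}^{(k)}-s_j^{(k)}\to0$, whence uniform-on-compacta continuity of $\psi$ and of $h^{-1}$ force $\varphi_t(\widetilde{x}_k)\to x$ uniformly over $t\in[t_j^{(k)},t_{j+1}^{(k)}]$, and then any fixed $t^\ast\in(jT_x^\varphi,(j+1)T_x^\varphi)$ lies in that interval for large $k$, yielding $\varphi_{t^\ast}(x)=x$ and contradicting minimality. The degenerate and infinite cases are correctly dispatched by monotonicity. This is essentially the same technique the paper uses in the proof of Lemma \ref{prop27} (in particular the verification of (\ref{eq2l8c}) and the appeal to Proposition \ref{prop28a}), so it fits naturally into the paper's toolkit.
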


\noindent
The subsequent analysis relies heavily on the properties of certain
invariant sets associated with the flows under
consideration. Specifically, given a flow $\varphi$ on $X$ and any two
points $x^-,x^+ \in X$, define the $(x^-,x^+)$-{\bf core} $C_{x^-,x^+}
(\varphi, X)$ as
\begin{align*}
C_{x^-,x^+} (\varphi, X) = \bigl\{x \in X : \: \: & \mbox{\rm There exist
  sequences $(t_n^{\pm})$ and $(x_n^{\pm})$ with $t_n^{\pm}\to  \pm \infty$}\\
& \mbox{\rm and $x_n^{\pm} \to x$ such that $\varphi_{t_n^{\pm} }( x_n^{\pm})\to x^{\pm}$} \bigr\} \, ;
\end{align*}
here and throughout, expressions containing $\pm$ (or $\mp$) are to be
read as two separate expressions containing only the upper and only
the lower symbols, respectively. Note that $C_{x^-,x^+} (\varphi, X)$ is $\varphi$-invariant and closed, possibly empty. For
linear flows, the $(0,0)$-core $C_{0,0}(\varphi, X)$, henceforth
simply denoted $C_0 (\varphi, X)$, is naturally of particular relevance, and so is
the {\bf core} 
$$
C(\varphi, X) := \bigcup\nolimits_{x^-,x^+ \in X} C_{x^-,x^+ } (\varphi, X)
\enspace \supset \enspace C_0 (\varphi, X) \, .
$$
Clearly, $C(\varphi, X)$ also is $\varphi$-invariant and contains
$\mbox{\rm Bnd}\, \varphi$ as well as all
non-wandering points of $\varphi$. For instance, if $X$ is
one-dimensional then $C(\varphi, X)$ simply is the convex hull of
$\mbox{\rm Fix}\, \varphi$, whereas $C_0(\varphi , X) =\{0\}\cap
\mbox{\rm Fix}\, \varphi$. Most importantly, $C(\varphi, X)$ and
$C_0(\varphi, X)$ both are well-behaved under orbit equivalence.

\begin{lem}\label{prop27}
Let $\varphi, \psi$ be flows on $X, Y$, respectively, and assume
that $\varphi$ is $(h,\tau)$-related to $\psi$. Then 
\begin{equation}\label{eq2l8}
h \bigl( C_{x^-,x^+ } (\varphi, X) \bigr) = C_{h(x^-), h(x^+)} (\psi ,
Y) \quad \forall x^-,x^+  \in X \, .
\end{equation}
Thus $h \bigl( C(\varphi, X)\bigr) = C(\psi, Y)$, and if
$h(0)=0$ then also $h \bigl( C_{0}(\varphi, X)\bigr) = C_{0}(\psi, Y)$.
\end{lem}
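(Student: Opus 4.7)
My plan is to prove the set equality (\ref{eq2l8}) by two inclusions and then peel off the two displayed consequences. For the inclusion ``$\subset$'', take $x \in C_{x^-,x^+}(\varphi, X)$ with witnessing sequences $(t_n^\pm)$, $(x_n^\pm)$ satisfying $t_n^\pm \to \pm\infty$, $x_n^\pm \to x$, and $\varphi_{t_n^\pm}(x_n^\pm) \to x^\pm$. The natural candidates for a $\psi$-witness at $h(x)$ are $y_n^\pm := h(x_n^\pm)$ and $s_n^\pm := \tau_{x_n^\pm}(t_n^\pm)$. Continuity of $h$ yields $y_n^\pm \to h(x)$, and the defining relation (\ref{eq1}) gives $\psi_{s_n^\pm}(y_n^\pm) = h(\varphi_{t_n^\pm}(x_n^\pm)) \to h(x^\pm)$, so the only remaining point is to verify that $s_n^\pm \to \pm\infty$.

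This is the technical heart of the proof, which I would isolate as the following auxiliary statement: if $x_n' \to x$ in $X$ and $t_n' \to +\infty$, then $\tau_{x_n'}(t_n') \to +\infty$ (and similarly with signs reversed). The argument would rest on the continuity of the map $y \mapsto \tau_y(T)$ at $y = x$ for each fixed $T \in \R$, part of the package of ``additional properties'' of $\tau$ declared immediately before Proposition \ref{prop22} and present throughout \cite{ArrP, ACK1, PdM}. Granting this, fix $K > 0$, set $T := \tau_x^{-1}(K+1)$, and observe that $\tau_{x_n'}(T) \to K+1 > K$; since $t_n' > T$ eventually and $\tau_{x_n'}$ is strictly increasing by Proposition \ref{prop22}, it follows that $\tau_{x_n'}(t_n') > \tau_{x_n'}(T) > K$ for large $n$, so $\tau_{x_n'}(t_n') \to +\infty$. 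Applied to both sign choices, this gives $s_n^\pm \to \pm\infty$.

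For the reverse inclusion ``$\supset$'' I would use symmetry: $\psi$ is $(h^{-1}, \sigma)$-related to $\varphi$ with $\sigma_y(s) := \tau_{h^{-1}(y)}^{-1}(s)$, and $\sigma$ inherits the same monotonicity and continuity properties as $\tau$; applying the forward-inclusion argument to this reversed pair gives $C_{h(x^-), h(x^+)}(\psi, Y) \subset h\bigl(C_{x^-, x^+}(\varphi, X)\bigr)$. The two displayed consequences are then immediate: as $(x^-, x^+)$ ranges over $X \times X$, the pair $(h(x^-), h(x^+))$ ranges over $Y \times Y$ since $h$ is a bijection, so taking unions in (\ref{eq2l8}) yields $h(C(\varphi, X)) = C(\psi, Y)$; and specializing (\ref{eq2l8}) to $x^\pm = 0$ under the hypothesis $h(0) = 0$ gives $h(C_0(\varphi, X)) = C_0(\psi, Y)$.

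The single real obstacle is the auxiliary convergence $\tau_{x_n'}(t_n') \to \pm\infty$: Proposition \ref{prop22} as stated provides only pointwise continuity of $\tau$ in its first variable, so this step genuinely relies on further structure of $\tau$ --- either joint continuity in $(t,x)$, which is plausibly part of the paper's tacit framework, or else a direct contradiction argument using the bijectivity of each $\tau_{x_n'}$ on $\R$ together with the continuity of $\varphi$, $\psi$, $h$, $h^{-1}$ to force a collision of $\varphi$-orbits incompatible with $x_n' \to x$.
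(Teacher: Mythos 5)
Your reduction to the single inclusion $h(C) \subset D$, the choice of candidate sequences $s_n^\pm = \tau_{x_n^\pm}(t_n^\pm)$, and the derivation of the two consequences are all correct and match the paper. The gap is exactly where you suspect it is, and it is fatal: the auxiliary claim that $x_n' \to x$ and $t_n' \to +\infty$ force $\tau_{x_n'}(t_n') \to +\infty$ is \emph{false} in the paper's framework. Proposition \ref{prop22} only gives pointwise (in $x$) regularity: each $\tau_x$ is an increasing continuous bijection of $\R$ with $\tau_x(0)=0$. There is no continuity of $x \mapsto \tau_x(T)$, and the paper is emphatic about this --- indeed, in Section \ref{sec4} it explicitly criticizes \cite{Ladis} for assuming $\tau$ continuous, points out that $\tau$ may not even be measurable, and presents the avoidance of any such regularity hypothesis as one of the article's contributions. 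So neither the ``joint continuity'' route nor your fallback contradiction argument can be made to work: the situation $s_n^+ \to s^+ < +\infty$ is \emph{not} contradictory, and a contradiction cannot be extracted from it.

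What the paper does instead is turn that situation into additional structure rather than a contradiction. After passing to subsequences so that $s_n^\pm \to s^\pm \in [-\infty,0] \times [0,+\infty]$, the case $\{s^-,s^+\} = \{-\infty,+\infty\}$ is immediate, and when (say) $s^+ < +\infty$ one gets $\psi_{s^+}(h(x)) = h(x^+)$ and then proves the nontrivial claim (\ref{eq2l8b}) that $h(x) \in \mbox{\rm Per}\,\psi$. This is the real technical heart of the lemma: it requires the uniform convergence estimate (\ref{eq2l8c}), whose proof uses continuity of $h^{-1}$, and Proposition \ref{prop28a} to conclude $x \in \mbox{\rm Fix}\,\varphi$ when $s^+ = 0$ (or a reduction to that case via $t^+ = \tau_x^{-1}(s^+)$ when $s^+ \in \R^+$). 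Once $h(x)$ is known to be periodic with some period $T$, the substitute sequences $y_n^+ \equiv h(x)$, $r_n^+ = s^+ + nT \to +\infty$ witness $h(x) \in D$. Your proposal is missing this entire periodicity mechanism, which is what allows the lemma to hold without any $x$-regularity of $\tau$.
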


\noindent
The proof of Lemma \ref{prop27} is facilitated by an elementary observation
\cite{Wsupp}.

\begin{prop}\label{prop28a}
Let $\varphi$ be a flow on $X$, and $x\in X$. Then the following
are equivalent:
\begin{enumerate}
\item For every $\varepsilon > 0$ there exists an $\widetilde{x}\in X$ such that
  $\|\varphi_t ( \widetilde{x}  ) - x\| < \varepsilon$ for all $0\le t \le
  \varepsilon^{-1}$;
\item $x\in \mbox{\rm Fix}\, \varphi$.
\end{enumerate}
\end{prop}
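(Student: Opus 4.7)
The plan is that (ii) $\Rightarrow$ (i) is essentially trivial: if $x \in \mbox{\rm Fix}\,\varphi$ then setting $\widetilde{x} := x$ makes $\|\varphi_t(\widetilde{x}) - x\| = 0$ for every $t \in \R$, and a fortiori on $[0,\varepsilon^{-1}]$, for any $\varepsilon > 0$.

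For the substantive direction (i) $\Rightarrow$ (ii), my plan is to pass to the limit $\varepsilon \to 0^+$ along the witnesses supplied by the hypothesis and invoke continuity of the time-$t$ maps $\varphi_{t_0}$. Concretely, for each $\varepsilon > 0$ I would select $\widetilde{x}_\varepsilon \in X$ as in (i). Evaluating the bound at $t = 0$ immediately yields $\|\widetilde{x}_\varepsilon - x\| < \varepsilon$, so $\widetilde{x}_\varepsilon \to x$ as $\varepsilon \to 0^+$. Now I fix an arbitrary $t_0 \ge 0$; once $\varepsilon < 1/t_0$ (vacuously if $t_0 = 0$) the hypothesis applies at $t = t_0$ and gives $\varphi_{t_0}(\widetilde{x}_\varepsilon) \to x$. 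On the other hand, continuity of $\varphi_{t_0}:X\to X$ delivers $\varphi_{t_0}(\widetilde{x}_\varepsilon) \to \varphi_{t_0}(x)$, and uniqueness of limits forces $\varphi_{t_0}(x) = x$ for every $t_0 \ge 0$.

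To finish, the flow axioms handle negative times: for any $t_0 \ge 0$ one has $\varphi_{-t_0}(x) = \varphi_{-t_0}\bigl(\varphi_{t_0}(x)\bigr) = \varphi_0(x) = x$, whence $x \in \mbox{\rm Fix}\,\varphi$. I do not anticipate any genuine obstacle here; the only mild subtlety is that (i) controls $\varphi_t(\widetilde{x})$ only on the one-sided interval $[0,\varepsilon^{-1}]$ rather than a symmetric one, which is precisely why the group property of $\varphi$ is invoked at the very end to back-propagate fixedness from $t \ge 0$ to $t \le 0$.
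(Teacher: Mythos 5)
Your argument is correct. The paper itself relegates the proof to the supplementary document \cite{Wsupp} and gives no argument here, but what you propose is the natural one: evaluate at $t=0$ to get $\widetilde{x}_\varepsilon\to x$, fix $t_0\ge 0$ and let $\varepsilon\downarrow 0$ to squeeze $\varphi_{t_0}(\widetilde{x}_\varepsilon)$ between $x$ (from the hypothesis) and $\varphi_{t_0}(x)$ (from continuity of $\varphi_{t_0}$), and then use the group law to pass from $t\ge 0$ to all $t\in\R$. No gaps.
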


\begin{proof}[Proof of Lemma \ref{prop27}]
It suffices to prove (\ref{eq2l8}), as all other assertions directly
follow from it. To do this, given
$x^-,x^+\in X$, denote $C_{x^-,x^+}(\varphi, X)$ and $C_{h(x^-),
  h(x^+)}(\psi, Y)$ simply by $C$ and $D$, respectively. From
reversing the roles of $(\varphi, X)$ and $(\psi, Y)$, as well as
$h$ and $h^{-1}$, it is clear that all that needs to be shown is that
$h(C)\subset D$.

Pick any $x\in C$, together with sequences $(t_n^{\pm})$ and
$(x_n^{\pm})$ with $t_n^{\pm} \to \pm \infty$ and $x_n^{\pm} \to x$ such that $\varphi_{t_n^{\pm}}
( x_n^{\pm})\to x^{\pm}$; assume w.l.o.g.\ that $t_n^- < 0 <
t_n^+$ for all $n$. Letting $s_n^{\pm} = \tau_{x_n^{\pm}} (t_n^{\pm})$, note
that $s_n^- < 0 < s_n^+$, and
\begin{equation}\label{eq2l8a}
h(x_n^{\pm}) \to h(x) \, , \quad \psi_{s_n^{\pm}} \bigl( h(x_n^{\pm})\bigr) \to
h(x^{\pm}) \, .
\end{equation}
By considering appropriate subsequences, assume that $s_n^-\to s^- \in [-\infty, 0]$ and $s_n^+ \to
s^+\in [0,+\infty]$. Note that (\ref{eq2l8a}) immediately yields
$h(x)\in D$ if $\{s^-,s^+\} = \{ -\infty, +\infty\}$, so assume for
instance that $s^+ < +\infty$. (The case
of $s^- > - \infty$ is completely analogous.) Then $\psi_{s^+} \bigl(
h(x)\bigr) = h(x^+)$ by (\ref{eq2l8a}), and, as will be shown below, in fact
\begin{equation}\label{eq2l8b}
h(x) \in \mbox{\rm Per}\, \psi \, .
\end{equation}
Assuming (\ref{eq2l8b}), let $T\in \R^+$ be any $\psi$-period of $h(x)$, and
$y_n^+ = h(x), r_n^+ = s^+ + nT$ for every $n\in \N$. With this
clearly $r_n^+ \to +\infty$, and $\psi_{r_n^+}\bigl(
h(x)\bigr) = h(x^+)$ for all $n$. Thus to complete the proof it only remains to verify (\ref{eq2l8b}).

Assume first that $s^+ = 0$, and hence $x= x^+$. For each $n\in \N$,
define a non-negative continuous function $f_n : \R \to \R$ as $f_n
(s) = \|\varphi_{s t_n^+} (x_n^+) - x\|$, and note that $f_n(0) = \|x_n^+ -
x\|\to 0$, but also $f_n (1) = \|\varphi_{t_n^+} (x_n^+) - x\|\to 0$. In
fact, more is true:
\begin{equation}\label{eq2l8c}
\lim\nolimits_{n\to \infty} f_n (s) = 0 \quad \mbox{\rm uniformly on
}[0,1]\, .
\end{equation}
To prove (\ref{eq2l8c}), suppose by way of contradiction that
\begin{equation}\label{eq2l8d}
\varepsilon_0  \le f_{n_k} (s_k) = \|\varphi_{s_k t_{n_k}^+} (x_{n_k}^+) - x\|
\quad \forall k\in \N \, , 
\end{equation}
with appropriate $\varepsilon_0 > 0$, $s_k \in [0,1]$, and integers $n_k
\ge k$. Since $0\le r_k := \tau_{x_{n_k}^+} (s_k t_{n_k}^+)\le
\tau_{x_{n_k}^+} (t_{n_k}^+) = s_{n_k}^+\to 0$, clearly $h \bigl( \varphi_{s_k t_{n_k}^+} (x_{n_k}^+) \bigr) = \psi_{r_k}
\bigl( h(x_{n_k}^+)\bigr) \: \to \: h(x) $, which, together with (\ref{eq2l8d}), contradicts the continuity of
$h^{-1}$ at $h(x)$, and hence establishes
(\ref{eq2l8c}). Deduce that, given any $\varepsilon >
0$, there exists an $N\in \N$ with $\max_{s\in [0,1]}f_N (s)<
\varepsilon$ as well as $t_N^+ > \varepsilon^{-1}$. But then
$\|\varphi_t (x_N^+) - x\|< \varepsilon$ for all $0\le t \le
\varepsilon^{-1}$, and Proposition \ref{prop28a} yields $x\in
\mbox{\rm Fix}\, \varphi$. By Proposition \ref{prop24}, $h(x)\in
\mbox{\rm Fix}\, \psi$, which proves (\ref{eq2l8b}) when $s^+ = 0$.

Finally, assume that $s^+ \in \R^+$, and let $t^+ = \tau_x^{-1}
(s^+)>0$. Then $h \bigl( \varphi_{t^+} (x) \bigr) = \psi_{s^+} \bigl(
h(x) \bigr) = h(x^+)$, and consequently $\varphi_{t^+} (x) = x^+$, as
well as
$$
\psi_{\tau_{x_n^+}(t_n^+ - t^+)}\bigl( h(x_n^+)\bigr) = h \circ
\varphi_{-t^+} \bigl( \varphi_{t_n^+} (x_n^+)\bigr) \: \to \: h \circ
\varphi_{-t^+} (x^+) = h(x) \, .
$$ 
Since $0 \le \tau_{x_n^+} (t_n^+ - t^+) \le s_n^+$ for all 
large $n$, assume w.l.o.g.\ that $\tau_{x_n^+} (t_n^+ - t^+)\to r \in
[0,s^+]$, and hence $\psi_r \bigl( h(x)\bigr) = h(x)$. On the one
hand, if $r\in \R^+$ then clearly $h(x)\in \mbox{\rm Per}\, \psi$. On the
other hand, if $r = 0$ then (\ref{eq2l8c}) holds with $f_n (s) =
\|\varphi_{s (t_n^+ - t^+)} (x_n^+) - x\|$, and the same argument as
above shows that $x\in \mbox{\rm Fix}\, \varphi$. Thus (\ref{eq2l8b})
also holds when $s^+ \in \R^+$.
\end{proof}

A crucial step in the subsequent analysis is the decomposition of 
flows into simpler, well-understood parts. To prepare for this,
recall that two flows $\varphi, \psi$ on $X, Y$, respectively, together induce the product flow
$\varphi \times \psi$ on $X\times Y$, by letting $(\varphi\times
\psi)_t = \varphi_t\times \psi_t $ for all $t\in \R$. Endow $X\times
Y$ with any norm. It is readily seen that
$$
C_{(x^-,y^-), (x^+,y^+)} (\varphi \times \psi, X\times Y) \subset
C_{x^-,x^+} (\varphi, X) \times C_{y^-, y^+} (\psi, Y) \quad \forall
x^-,x^+ \in X, y^-,y^+  \in Y \, ,
$$
and therefore also
\begin{equation}\label{eq23}
C(\varphi \times \psi, X\times Y) \subset C(\varphi , X) \times
C(\psi, Y) \, ;
\end{equation}
the same inclusion is valid with $C_0$ instead of $C$. 
Quite trivially, equality holds in (\ref{eq23}) and its analogue for
$C_0$ if one factor is at most one-dimensional.
As the following example shows, however, equality does not hold in
general if $\min \{\mbox{\rm dim} \, X , \mbox{\rm dim} \, Y\}\ge 2$.

\begin{example}\label{ex25}
Let $X = \R^2$, and write $X^+ = (\R^+)^2$ and ${\bf 1} =
\left[ \! \begin{array}{c} 1\\ 1\end{array} \! \right]$ for
convenience. Consider the flow $\varphi$ on $X$ generated by $\dot x = V(x)$,
with the $C^{\infty}$-vector field
$$
V(x) = \left\{
\begin{array}{ll}
{\displaystyle \frac1{s}} \left[ \begin{array}{c}   f(s)  x_1 \log x_1 - s f(s) x_1 \log x_2 \\
    s f(s) x_2 \log x_1 +  f(s) x_2 \log x_2 \end{array}\right] &
\mbox{\rm if } x \in X^+ \setminus \{{\bf 1}\} \, , \\[5mm]
0 & \mbox{\rm otherwise}\, , 
\end{array}
\right.
$$
where $s=s(x) = \sqrt{(\log x_1)^2 + (\log x_2)^2}$, and $f(s) =
e^{-s-1/s}$ for all $s\in \R^+$. Clearly, $(X\setminus X^+ )\cup \{{\bf
1}\}\subset \mbox{\rm Fix}\, \varphi$. Introducing (exponential) polar coordinates
$x_1 = e^{r \cos \theta}$, $ x_2 = e^{r\sin \theta}$
in $X^+$ transforms $\dot x = V(x)$ into
\begin{equation}\label{eqex1b}
\dot r = \dot \theta = f (r) \, . 
\end{equation}
Deduce from (\ref{eqex1b}) that $\lim_{t\to - \infty} r(t)=0$,
$\lim_{t\to +\infty} r(t) = +\infty$, and $r-\theta$ is
constant. Consequently, $\lim_{t\to -\infty} \varphi_t (x) = {\bf 1}$
for every $x\in X^+$, but also, given any $x\in X^+ \setminus \{{\bf
  1}\}$, there exists a sequence $(t_n^+)$ with $t_n^+ \to
+\infty$ such that $\theta (t_n^+) + \frac34 \pi \in  2\pi \Z$ for
all $n$, and hence $\lim_{n\to \infty} \varphi_{t_n^+} (x) = 0$. Thus
$x\in C_{{\bf 1}, 0} (\varphi, X)$ for every $x\in X^+\setminus \{{\bf
1}\}$, and $C(\varphi, X) = X$; see also Figure \ref{fig3}.

Next, note that $f$ is decreasing on $[1,+\infty[$, and hence any two solutions $(r,\theta)$, $(\widetilde{r},
\widetilde{\theta}\, )$ of (\ref{eqex1b}) with $r(0), \widetilde{r}(0)
\ge 1$ satisfy
\begin{equation}\label{eqex1c}
\big| r(t) - \widetilde{r} (t) \big| \le \big| r(0) - \widetilde{r}
(0) \big| \, , \quad
\big| \theta(t) - \widetilde{\theta} (t) \big| \le \big| r(0) - \widetilde{r}
(0) \big|  + \big| \theta (0) -
\widetilde{\theta }
(0) \big|\quad \forall t \ge 0 \, ;
\end{equation}
moreover, $\theta - \widetilde{\theta}$ is constant whenever $r(0) =
\widetilde{r} (0)$. Pick any $a \ge e^{1/\sqrt{2}}$, and consider
\begin{equation}\label{eqex1d}
u = \left[ \! \begin{array}{c} a \\ a^{-1} \end{array}\! \right], \quad
\widetilde{u} = \left[ \! \begin{array}{c} a^{-1} \\
    a \end{array} \! \right] .
\end{equation}
Then $r(t) = s \bigl( \varphi_t (u)
\bigr) = s \bigl( \varphi_t (\widetilde{u}) \bigr) = \widetilde{r}(t)\ge 1$
and $\theta (t) - \widetilde{\theta} (t) \in \pi + 2\pi \Z$ for all
$t\ge 0$. Also, let
$$
U = \left\{
x\in X^+ : x_2^{\sqrt{3}} > \max \{ x_1, x_1^3\}^{-1}
\right\}  = \Bigl\{ x\in X^+ \setminus \{{\bf 1}\} : \theta \in \: \left]
  -{\textstyle \frac13} \pi , {\textstyle \frac56} \pi \right[ + 2\pi \Z  \Bigr\} \, .
$$
For any $\varepsilon >0$ sufficiently small, it is clear
from (\ref{eqex1c}) that for every $t\ge 0$ at least one of the two
open sets $\varphi_t \bigl( B_{\varepsilon } (u) \bigr) $ and $\varphi_t
\bigl( B_{\varepsilon } (\widetilde{u}) \bigr)$ is entirely contained in $U$.
Note that $B_{\varepsilon } (u)  \times B_{\varepsilon } (\widetilde{u})$ is a neighbourhood of $(u,\widetilde{u})$ in $X\times
X$. Consequently, $\bigl( (\varphi\times \varphi)_{t_n^+} (x_n, \widetilde{x}_n)\bigr)$ is
unbounded whenever $t_n^+ \to + \infty$ and $(x_n,\widetilde{x}_n) \to (u,\widetilde{u})$. Thus, $(u,\widetilde{u}) \not \in C(\varphi\times
\varphi, X\times X)$, whereas clearly $(u,\widetilde{u}) \in C(\varphi
, X) \times C(\varphi, X)$, and so the inclusion
(\ref{eq23}) is strict in this example.
\end{example}

\begin{figure}[ht]
\psfrag{tx1}[]{$x_1$}
\psfrag{txx2}[]{$x_2$}
\psfrag{tv14}[]{$\frac14$}
\psfrag{tv1}[]{$1$}
\psfrag{th1}[]{$1$}
\psfrag{tbf1}[]{${\bf 1}$}
\psfrag{tx}[]{$u$}
\psfrag{tx2}[]{$u$}
\psfrag{txn2}[]{$u_n$}
\psfrag{tptx}[]{$\varphi_t (u) $}
\psfrag{tptxt}[]{$\varphi_t (\widetilde{u})$}
\psfrag{txt}[]{$\widetilde{u}$}
\psfrag{tu}[]{$U$}
\psfrag{ttit1a}[]{$u,\widetilde{u}\in C(\varphi, X)$}
\psfrag{ttit1b}[]{$(u,\widetilde{u})\not \in C(\varphi\times \varphi, X\times X)$}
\psfrag{ttit2a}[]{$u \not \in C^*(\varphi, X)$, $u\in C^*(\psi, X)$}
\psfrag{ttit2b}[]{$ \varphi, \psi$ orbit equivalent, with $h={\rm
    id}_X$}
\psfrag{tpsix}[]{$\psi_t (u_n) $}
\psfrag{tpsiz}[l]{$\psi_t (u) $}
%
%
\begin{center}
\includegraphics{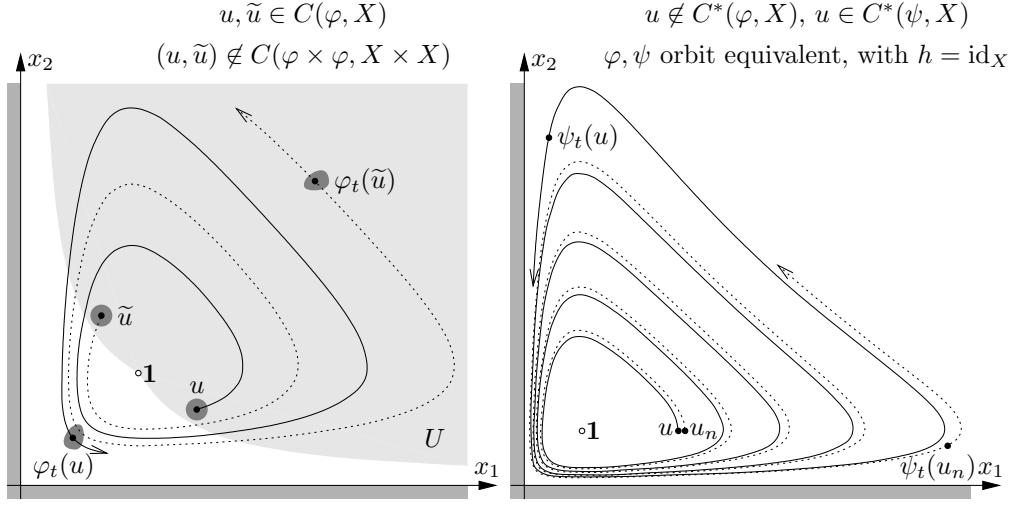}
\end{center}
\caption{In general, (non-uniform) cores are well-behaved under orbit
  equivalence but not under products (left; see Example \ref{ex25}),
  whereas for uniform cores the situation is the exact opposite
  (right; see Example \ref{ex255}).}\label{fig3}
\end{figure}

Good behaviour of certain invariant objects under products is indispensable for the analysis in later sections. Negative examples such as Example
\ref{ex25} therefore suggest that the cores $C(\varphi, X)$ and $C_0 (\varphi,
X)$ be supplanted, or at least supplemented with, similar objects
that are well-behaved under products. To this end, note that 
\begin{align*}
C (\varphi, X) = \bigl\{x \in X : \: \: & \mbox{\rm There exist
  sequences $(t_n^{\pm})$ and $(x_n^{\pm})$ with $t_n^{\pm}\to \pm \infty$}\\
& \mbox{\rm and $x_n^{\pm} \to x$ such that $\bigl( \varphi_{t_n^{\pm} }( x_n^{\pm}) \bigr)$ both are bounded} \bigr\} \, .
\end{align*}
In light of this, define the {\bf uniform core} $C^*(\varphi, X)$ as 
\begin{align*}
C^* (\varphi, X) = \bigl\{x \in X : \: \: & \mbox{\rm For {\em every\/} sequence $(t_n)$ with
  $|t_n|\to +\infty$ there exists a sequence} \\
& \mbox{\rm $(x_n)$ with $x_n \to x$ such that $\bigl( \varphi_{t_n} (x_n)\bigr)$ is bounded} \bigr\} \, ;
\end{align*}
analogously, define the {\bf uniform} $(0,0)$-{\bf core}
$C^*_{0} (\varphi, X)$ as
\begin{align*}
C^*_{0}(\varphi, X) = \bigl\{x \in X : \: \: & \mbox{\rm For {\em every\/} sequence $(t_n)$ with
  $|t_n|\to +\infty$ there exists a sequence} \\
& \mbox{\rm $(x_n)$ with $x_n \to x$ such that $\varphi _{t_n}(x_n)\to
  0$} \bigr\} \enspace \subset \enspace  C^* (\varphi, X) \, .
\end{align*}
Again, $C^*(\varphi, X)$ and $C^*_{0} (\varphi, X)$ are
$\varphi$-invariant, and they obviously are contained in their
non-uniform counterparts, i.e.,
\begin{equation}\label{eq21}
C^* (\varphi, X)\subset C(\varphi, X) \, , \quad C^*_{0} (\varphi,
X)\subset C_{0}(\varphi, X) \, .
\end{equation}
Moreover, $C^*(\varphi, X)\supset \mbox{\rm Bnd}\, \varphi$, just as
for (non-uniform) cores. For the flow
$\varphi$ in Example \ref{ex25}, it is clear that 
$C^*(\varphi , X) = \mbox{\rm Fix}\, \varphi \ne X = C(\varphi, X)$;
see also Example \ref{ex255} below.
Thus the left inclusion in (\ref{eq21}) is strict in general, and
so is the right inclusion.

As alluded to earlier, $C^* (\varphi, X)$ and $C^*_0 (\varphi, X)$ are
useful for the purpose of this article because, unlike their
non-uniform counterparts, they are well-behaved under products.

\begin{lem}\label{prop26}
Let $\varphi,\psi$ be flows on $X,Y$, respectively. Then
$$
C^* (\varphi\times \psi, X\times Y) = C^* (\varphi, X) \times C^* (\psi, Y) 
$$
as well as
$$
C^*_{0}(\varphi\times \psi, X\times Y) = C^*_{0} (\varphi, X) \times C^*_{0} (\psi, Y)
\, . 
$$
\end{lem}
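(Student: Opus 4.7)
The plan is to prove both equalities directly from the definitions, exploiting the fact that in $C^*(\varphi,X)$ the sequence $(x_n)$ may depend on the prescribed $(t_n)$, so in the product setting we may build the approximating sequences coordinatewise using the same $(t_n)$. The norm on $X\times Y$ may be fixed as $\|(x,y)\|=\|x\|+\|y\|$ (any equivalent norm works); with this choice, a sequence in $X\times Y$ is bounded (resp.\ converges to $0$) if and only if both coordinate sequences are bounded (resp.\ converge to $0$).

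For the inclusion $C^*(\varphi\times\psi,X\times Y)\subset C^*(\varphi,X)\times C^*(\psi,Y)$, I would fix $(x,y)\in C^*(\varphi\times\psi,X\times Y)$ and any sequence $(t_n)$ with $|t_n|\to+\infty$. The defining property yields $(x_n,y_n)\to (x,y)$ with $\bigl((\varphi\times\psi)_{t_n}(x_n,y_n)\bigr)=\bigl(\varphi_{t_n}(x_n),\psi_{t_n}(y_n)\bigr)$ bounded; by the product norm observation, both coordinate sequences are bounded, so $x\in C^*(\varphi,X)$ and $y\in C^*(\psi,Y)$.

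For the reverse inclusion, I would fix $x\in C^*(\varphi,X)$, $y\in C^*(\psi,Y)$, and any $(t_n)$ with $|t_n|\to+\infty$. Applying the definition of $C^*(\varphi,X)$ to the sequence $(t_n)$ produces $(x_n)\to x$ with $(\varphi_{t_n}(x_n))$ bounded; applying the definition of $C^*(\psi,Y)$ to the very same $(t_n)$ independently produces $(y_n)\to y$ with $(\psi_{t_n}(y_n))$ bounded. Then $(x_n,y_n)\to (x,y)$ in $X\times Y$, and $(\varphi\times\psi)_{t_n}(x_n,y_n)$ is bounded, showing $(x,y)\in C^*(\varphi\times\psi,X\times Y)$. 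The identical argument, with ``bounded'' replaced everywhere by ``converging to $0$'', establishes the analogous equality for $C^*_0$.

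There is really no serious obstacle here; the whole point is that the quantifier structure of $C^*$ (``for every $(t_n)$ there exists $(x_n)$'') permits the factorwise argument that fails for $C$, where the data in each factor would require separate diverging time sequences $(t_n^\pm)$, as illustrated by Example \ref{ex25}. The one subtle point worth emphasizing in the write-up is precisely this contrast: the uniform quantification over $(t_n)$ forces a common time sequence in the product, which is exactly what is needed to concatenate independently chosen approximants.
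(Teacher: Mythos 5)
Your proof is correct and is essentially the same argument the paper gives: the paper simply records that $\bigl((\varphi\times\psi)_{t_n}(x_n,y_n)\bigr)$ is bounded (converges to $0$) if and only if both coordinate sequences are, and declares the equalities an immediate consequence. Your write-up just unfolds the two inclusions and makes the quantifier structure explicit; in particular, the observation that the same $(t_n)$ can be fed independently into the two defining conditions is precisely the reason the uniform core behaves well under products, and you have identified it correctly.
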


\begin{proof}
The asserted equality for $C^*$ (respectively, $C^*_0$) is an immediate consequence of
the fact that $\bigl( (\varphi \times \psi)_{t_n} (x_n, y_n)\bigr)$ is
bounded (converges to $0$) if and only if $\bigl( \varphi_{t_n}
(x_n)\bigr)$ and $\bigl( \psi_{t_n} (y_n)\bigr)$ both are bounded (converge to $0$).
\end{proof}

Regarding the behaviour of uniform cores under equivalence, it is
readily checked that if $\varphi, \psi$ are {\em flow\/} equivalent
then $h\bigl( C^* (\varphi, X)\bigr) = C^* (\psi, Y)$; moreover,
$h\bigl( C_0^* (\varphi, X)\bigr) = C_0^* (\psi, Y)$ if
$h(0)=0$. These equalities may fail under mere {\em orbit\/}
equivalence, however, so the analogue of Lemma \ref{prop27} for uniform
cores does not hold. The following example demonstrates this.

\begin{example}\label{ex255}
With the identical objects as in Example \ref{ex25}, first deduce from
(\ref{eqex1c}) that, given any $x\in X^+ \setminus \{{\bf 1}\}$ and
sufficiently small $\varepsilon  > 0$, one may chose $(t_n)$ with $t_n \to
+\infty$ such that $\varphi_{t_n} \bigl( B_{\varepsilon } (x)\bigr)\subset
U$ for all $n$. But then clearly $\bigl( \varphi_{t_n} (x_n)\bigr)$ is
unbounded whenever $x_n \to x$, and hence $x\not \in C^* (\varphi,
X)$. Thus, $C^*(\varphi, X) = (X \setminus X^+) \cup \{{\bf 1}\} =
\mbox{\rm Fix}\, \varphi \ne C(\varphi, X)$; see also Figure \ref{fig3}.

Next, fix a decreasing $C^{\infty}$-function $g:\R \to \R$ with
$g(s)=1$ for all $s\le 1$ and $g(s)= 0$ for all $s\ge 2$. Let $\psi$
be the flow on $X$ generated by
$\dot x = v(x) V(x)$, where $v: X\to \R $ is given by
\begin{equation}\label{ex255eq1}
v(x) = \left\{
\begin{array}{ll}
1 + e^{4\pi} g \bigl( (s - \log x_1 \cos s - \log x_2 \sin
s ) s^{-1} e^{s- 1/s} \bigr)  & \mbox{\rm if } x\in X^+
\setminus \{{\bf 1}\} \, , \\
1  & \mbox{\rm otherwise}\, ;
\end{array}
\right.
\end{equation}
note that the vector field $vV$ is $C^{\infty}$.
Similarly to Example \ref{ex25}, $(X\setminus X^+ ) \cup \{{\bf 1}\} = 
\mbox{\rm Fix}\, \psi$, and (exponential) polar coordinates in
$X^+$ transform $\dot x = v(x)V(x)$ into
\begin{equation}\label{ex255eq2}
\dot r = \dot \theta = f(r) + e^{4\pi} f(r) g\Bigl( \bigl(1 - \cos
(\theta - r) \bigr) e^{r-1/r} \Bigr)  \, .
\end{equation}
Note that $r-\theta$ again is constant for every solution of
(\ref{ex255eq2}). Specifically, given any $0\le a \le \frac12 $, let
$(r_a,\theta_a)$ be the solution of (\ref{ex255eq2}) with $r(0) = 2\pi
(1+a)$ and $\theta(0) =0$. Then $r_a(t) - \theta_a(t) = 2\pi (1+a)$
and $r_a(t) - r_0 (t) \le 2\pi a$ for all $t\ge 0$. Notice that $\lim_{t\to
+\infty} r_a(t) = +\infty$. Consequently, for every $0< a \le \frac12 $ there
exists a $t_a\in \R^+$ such that $\dot r_a = f(r_a)$ for all $t\ge t_a$, but
also $e^{-1/r_0} (1+e^{4\pi}) > 1 +e^{3\pi}$. Clearly,
$\lim_{a\downarrow 0 } t_a = +\infty$; assume w.l.o.g.\ that
$a\mapsto t_a$ is decreasing on $]0,\frac12]$. It follows that $\dot r_0 \ge e^{-r_0}
(1 + e^{3\pi}) $ as well as $\dot r_a \le e^{-r_a}$ on $[t_a, +\infty[$,
and therefore also, with $\widetilde{t}_a := t_a + e^{4\pi + r_0 (t_a)}$,
$$
\theta_0 (t) - \theta_a (t) = r_0(t) - r_a (t) + 2\pi a \ge 2\pi a +
\log \frac{e^{r_0 (t_a)} + (t-t_a) (1+e^{3\pi})}{e^{r_a (t_a)} + t -
  t_a} > 3\pi \quad \forall t \ge \widetilde{t}_a \, .
$$
Deduce from this and the continuity of $a\mapsto
\theta_a(t)$, that, given any integer $j\ge 2$ and $t\ge
\widetilde{t}_{1/j}$, there exists $0< a_j(t) \le j^{-1}$ such that
$\theta_{a_j(t)} (t) + \frac34 \pi \in 2\pi \Z$.

With these preparations, consider the point $u = 
\left[
  \!  \begin{array}{c} 
 e^{2\pi} \\ 1
\end{array}\! \right]\not \in C^*(\varphi, X)$, and let $(t_n)$ be
any sequence with $|t_n| \to +\infty$. If $t_n \to -\infty$ then
$\bigl( \psi_{t_n} (u)\bigr)$ is bounded, in fact $\psi_{t_n} (u)\to
{\bf 1}$, so it suffices to assume that $(t_n)$ is
increasing, and $t_1 \ge \widetilde{t}_{1/2}$. Pick a sequence $(j_n)$
with $\widetilde{t}_{1/j_n} \le t_n < \widetilde{t}_{1/j_{n+1}}$ for
all $n$. Note that $j_n \to \infty$, and hence $0<a_{j_n} (t_n) < j_n^{-1} \to
0$. Writing $b_n : = a_{j_n} (t_n) $ for convenience, consider
$$
u_n =  \left[
  \!  \begin{array}{c} 
 e^{2\pi (1 + b_n ) } \\ 1
\end{array}\! \right] \quad \forall n \in \N \, .
$$   
With this, not only $u_n \to u$, but also 
$\psi_{t_n} (u_n) = e^{- r_{b_n} (t_n)/\sqrt{2}} {\bf 1} \: \to \: 0$,
showing that $\bigl( \psi_{t_n} (u_n)\bigr)$ is bounded. In other
words, $u\in C^* (\psi, X)$. Recall that $\varphi$ and $\psi$ are generated
by $\dot x = V(x)$ and $\dot x = v(x)V(x)$, respectively, with $v$ given by (\ref{ex255eq1}), and $1
\le v \le 1 +e^{4\pi}$. As pointed out right after Proposition
\ref{prop23}, the flows $\varphi, \psi$ are orbit equivalent with $h= {\rm id}_X$, and yet $h\bigl( C^* (\varphi, X)\bigr) \ne C^* (\psi, X)$.
\end{example}

\section{Cores of linear flows}\label{sec2}

In a linear flow, naturally an invariant set is of particular interest
if it also is a (linear) subspace. For instance, $\mbox{\rm Fix}\, \Phi$ and $\mbox{\rm Bnd}\, \Phi$ (but not, in
general, $\mbox{\rm Per}\, \Phi$) are $\Phi$-invariant subspaces for any linear flow $\Phi$, and
so are all uniform cores. As seen in the previous section, uniform
cores are well-behaved under products (Lemma \ref{prop26}) but not under orbit equivalence
(Example \ref{ex255}), whereas for (non-uniform)
cores the situation is the exact opposite (Lemma
\ref{prop27} and Example \ref{ex25}). This discrepancy is consistent with a lack of equality in (\ref{eq21})
in general. One main result of this section, Theorem
\ref{thm3xx} below, shows that both inclusions in (\ref{eq21}) are in
fact equalities --- provided that $\varphi$ is linear. As an important
consequence, all cores of linear flows are invariant subspaces that are well-behaved under
orbit equivalence {\em and\/} under products. With regard to the last
assertion in  Lemma \ref{prop27}, the following additional property of
orbit equivalences is useful when dealing with linear flows; again,
for convenience this property is hereafter assumed to be part of what it means for $\Phi$ to be $(h,\tau)$-related to $\Psi$.

\begin{prop}\label{prop30a}
Let $\Phi, \Psi$ be linear flows, and assume
that $\Phi$ is $(h,\tau)$-related to $\Psi$. Then $\Phi$ is
$(\, \widetilde{h},\tau)$-related to $\Psi$ where $\widetilde{h}(0)=0$.
\end{prop}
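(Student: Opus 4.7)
The plan is to exploit the fact that for a linear flow the origin is always a fixed point, and to use translation by a constant vector to adjust $h$ without disturbing the conjugacy relation. Since $\Phi_t(0) = e^{tA^\Phi}(0) = 0$ for all $t\in\R$, we have $0\in\mbox{\rm Fix}\,\Phi$, and hence by Proposition \ref{prop24} the point $y_0 := h(0)$ lies in $\mbox{\rm Fix}\,\Psi$.

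Next I would set $\widetilde{h}(x) := h(x) - y_0$ for all $x\in X$. Then $\widetilde{h}$ is the composition of the homeomorphism $h$ with the translation $y\mapsto y - y_0$, so $\widetilde{h}: X\to Y$ is itself a homeomorphism, and clearly $\widetilde{h}(0) = 0$. It remains to verify the intertwining relation with the same $\tau$. Using the linearity of $\Psi_t = e^{tA^\Psi}$ together with $\Psi_t(y_0) = y_0$, for every $(t,x)\in\R\times X$ one has
\begin{equation*}
\widetilde{h}\bigl(\Phi_t(x)\bigr) = h\bigl(\Phi_t(x)\bigr) - y_0 = \Psi_{\tau_x(t)}\bigl(h(x)\bigr) - \Psi_{\tau_x(t)}(y_0) = \Psi_{\tau_x(t)}\bigl(h(x) - y_0\bigr) = \Psi_{\tau_x(t)}\bigl(\widetilde{h}(x)\bigr),
\end{equation*}
which is exactly (\ref{eq1}) for the pair $(\widetilde{h},\tau)$.

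There is no real obstacle here; the only subtlety is making sure that $\tau$ itself is unchanged, which is automatic because the translation acts pointwise in $Y$ and commutes with the time-$t$ maps of $\Psi$ (thanks to $y_0$ being fixed and $\Psi_t$ being linear). Combined with Proposition \ref{prop22}, one may henceforth assume without loss of generality that any orbit equivalence $h$ between linear flows satisfies both $h(0)=0$ and $\tau_x(0)=0$, which is the convention adopted in the sequel.
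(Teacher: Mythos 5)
Your proof is correct and is the natural argument here. The paper states Proposition \ref{prop30a} without proof (it is one of the auxiliary facts deferred to the supplementary document \cite{Wsupp}), but your approach — noting that $h(0)\in\mbox{\rm Fix}\,\Psi$ via Proposition \ref{prop24}, then post-composing $h$ with the translation $y\mapsto y-h(0)$ and using linearity of $\Psi_t$ together with $\Psi_t\bigl(h(0)\bigr)=h(0)$ to preserve relation (\ref{eq1}) with the same $\tau$ — is exactly the intended one.
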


In a first step towards Theorem \ref{thm3xx}, cores of irreducible linear flows are considered.
Recall that $\Phi$ is {\bf irreducible} if $X =  Z \oplus \widetilde{Z}$,
with $\Phi$-invariant subspaces $Z$, $\widetilde{Z}$, implies that $Z=\{0
\}$ or $\widetilde{Z}=\{0\}$. Plainly, $\Phi$ is irreducible
if and only if, relative to the appropriate basis, $A^{\Phi}$ is a single
real Jordan block. In particular, for irreducible $\Phi$ the spectrum $\sigma
(\Phi):= \sigma (A^{\Phi})$ is either a real
singleton or a non-real complex conjugate pair. In order to
clarify the structure of cores of irreducible linear flows, for
every $s\in \R$ denote by $\lceil s\rceil$ and $\lfloor s \rfloor$ the
smallest integer $\ge s$ and the largest integer $\le s$, respectively.

\begin{lem}\label{lem31}
Let $\Phi$ be an irreducible linear flow on $X$. Then $C^*(\Phi, X) =
C(\Phi, X)$, and
$$
\mbox{\rm dim}\, C^*(\Phi, X) =
\mbox{\rm dim}\, C(\Phi, X) =
\left\{
\begin{array}{ll}
0 & \mbox{if } \sigma(\Phi)\cap \imath \R = \varnothing \, , \\[1mm]
\lceil  \frac12 \, \mbox{\rm dim}\, X\rceil & \mbox{if }
\sigma(\Phi)= \{0\} \, , \\[1mm]
2\lceil  \frac14 \, \mbox{\rm dim}\, X\rceil & \mbox{if }
\sigma(\Phi)\subset \imath \R \setminus  \{0\} \, .
\end{array}
\right.
$$
Similarly, $C_0^* (\Phi, X) = C_0(\Phi, X)$, and
$$
\mbox{\rm dim}\, C_0^*(\Phi, X) =
\mbox{\rm dim}\, C_0(\Phi, X) =
\left\{
\begin{array}{ll}
0 & \mbox{if } \sigma(\Phi)\cap \imath \R = \varnothing \, , \\[1mm]
\lfloor  \frac12 \, \mbox{\rm dim}\, X\rfloor & \mbox{if }
\sigma(\Phi)= \{0\} \, , \\[1mm]
2\lfloor  \frac14 \, \mbox{\rm dim}\, X\rfloor & \mbox{if }
\sigma(\Phi)\subset \imath \R \setminus  \{0\} \, .
\end{array}
\right.
$$
\end{lem}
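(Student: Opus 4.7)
My plan is to use the irreducibility of $\Phi$ to bring $A^{\Phi}$ into a canonical form and then split on the spectral type, identifying the cores in each case by explicit formulas.

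Since $\Phi$ is irreducible, after a change of basis $A^{\Phi}$ is a single real Jordan block, so $\sigma(\Phi)$ is either a real singleton or a single non-real conjugate pair. If $\sigma(\Phi)\cap \imath\R = \varnothing$, the eigenvalues share a common nonzero real part, and a standard estimate for Jordan blocks yields $\|\Phi_t x\|\ge c e^{\alpha|t|}\|x\|$ with suitable $c,\alpha > 0$, valid for all $t$ of the appropriate sign. For any $x\ne 0$ and any $x_n\to x$ the sequence $\Phi_{t_n}x_n$ then blows up along that time direction, so $x\not\in C(\Phi,X)$, and all four cores collapse to $\{0\}$.

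If $\sigma(\Phi) = \{0\}$, then $A^{\Phi}$ is a nilpotent Jordan block of size $n = \mbox{\rm dim}\, X$, and I identify $X$ with real polynomials of degree $< n$ via $e_k\mapsto z^{k-1}/(k-1)!$, under which $A^{\Phi} = d/dz$, $\Phi_t p(z) = p(z+t)$, and $\ker(A^{\Phi})^j$ corresponds to $V_j := \{p : \deg p < j\}$. The claim becomes $C(\Phi,X) = V_{\lceil n/2\rceil}$ and $C_0(\Phi, X) = V_{\lfloor n/2\rfloor}$. For the inclusion $V_k\subseteq C(\Phi,X)$ with $k = \lceil n/2\rceil$ (and the analogous one for $C_0$ with $k = \lfloor n/2\rfloor$), given $p\in V_k$ I would take the explicit perturbation
\[
p_n(z) \; := \; \bigl( 1 - z/t_n\bigr)^{n-k}\, p(z) \, ,
\]
which converges to $p$ coefficient-wise and satisfies $p_n(z+t_n) = (-z/t_n)^{n-k}\, p(z+t_n)$. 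Because $p$ has degree $< k$, the coefficient of $z^l$ in $p_n(z+t_n)$ is, for $l\ge n-k$, of order $O(t_n^{k-1-l})$, and hence at worst $O(t_n^{2k-1-n})$: bounded when $k = \lceil n/2\rceil$ and $o(1)$ when $k = \lfloor n/2\rfloor$. Since this formula is explicit in $t_n$, the same $p_n$ simultaneously shows $V_{\lceil n/2\rceil}\subseteq C^*(\Phi,X)$ and $V_{\lfloor n/2\rfloor}\subseteq C_0^*(\Phi,X)$. The reverse inclusions follow by a cascading leading-coefficient argument: imposing $p_n^{(l)}(t_n) = O(1)$ (or $o(1)$) from $l = n-1$ downward and writing asymptotic expansions $p_n^{(j)}(0) = a_j + \mu_j/t_n + \cdots$ (along a subsequence), the constraint at level $l = n-2k$ (for $C$) or $l = n-2k+1$ (for $C_0$) forces $a_{n-k} = 0$, while the intervening levels merely fix the rate parameters $\mu_j$; the cascade terminates exactly when the level index would become negative, leaving $a_j = 0$ for $j\ge \lceil n/2\rceil$ in the $C$ case and $j\ge \lfloor n/2\rfloor$ in the $C_0$ case.

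If $\sigma(\Phi)\subseteq \imath\R\setminus\{0\}$ then $\mbox{\rm dim}\, X = 2m$, and compatibly with the eigenvalue pair $\pm\imath\omega$ I introduce a complex structure under which $X\cong \C^m$ and $A^{\Phi} = \imath\omega\, I + N$ for a complex nilpotent Jordan block $N$ of size $m$. Since $\Phi_t = e^{\imath\omega t}\, e^{tN}$ and $|e^{\imath\omega t}| = 1$, the boundedness and convergence-to-$0$ conditions defining all four cores depend only on $e^{t_n N}x_n$, so the problem reduces to the nilpotent case for $N$ on $\C^m$, yielding complex dimensions $\lceil m/2\rceil$ and $\lfloor m/2\rfloor$, i.e., the real dimensions $2\lceil (\mbox{\rm dim}\,X)/4\rceil$ and $2\lfloor (\mbox{\rm dim}\,X)/4\rceil$ stated in the lemma. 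The principal obstacle is the cascade in the nilpotent case: beyond the limits $a_j$ one must also track the rate parameters in the $1/t_n$-expansions of the $p_n^{(j)}(0)$, verify that they can be chosen consistently level by level until the cascade terminates, and confirm that it does so precisely at the claimed threshold --- a fact the ansatz $(1 - z/t_n)^{n-k} p$ makes transparent on the sufficiency side.
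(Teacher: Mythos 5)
Your proposal takes essentially the same route as the paper: reduce to a single real Jordan block, split on the three spectral types (hyperbolic, nilpotent, purely imaginary), establish $V_k\subseteq C_0^*$ (resp.\ $V_k\subseteq C^*$) by an explicit $t$-parametrized perturbation, obtain the opposite inclusion $C_0\subseteq V_k$ (resp.\ $C\subseteq V_k$) by asymptotic analysis of what boundedness of $\Phi_{t_n}p_n$ forces on the limiting coefficients, and close the chain via $C_0^*\subseteq C_0$ and $C^*\subseteq C$. The reduction of the imaginary case to the nilpotent one via norm-invariance of the rotation factor is exactly the paper's observation. The polynomial reformulation with $\Phi_t p=p(\,\cdot\,+t)$ and the closed-form ansatz $p_n(z)=(1-z/t_n)^{n-k}p(z)$ is a genuine gain in transparency over the paper's construction of $x_t$ through $D_m(t)$ and the $\Delta$-matrices: the two perturbations are not identical (the paper freezes the low coordinates and you perturb them too), but both annihilate the low-index coefficients of $\Phi_{t_n}p_n$ exactly and leave the rest of size $O(t_n^{2k-1-n})$, the correct threshold.

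The genuine gap, which you flag yourself as the principal obstacle, is the reverse inclusion. Your "cascading leading-coefficient argument" is right in spirit but more delicate than your one-line summary: already for $n=4$, the constraints at levels $l=1$ and $l=0$ yield a coupled $2\times2$ system in $(a_2,\mu_3)$, and one must check that its determinant is nonzero before concluding $a_2=0$; in general, successive levels couple the limits $a_j$ with a growing collection of rate parameters (the $\mu_j$'s, then rates of the rates, and so on), whose very existence along subsequences has to be arranged at each stage, and the nondegeneracy of the resulting triangular linear system is precisely what needs proof. The paper's $D_m(t)$-rescaling collapses this entire cascade to a single matrix identity, with the nondegeneracy supplied once and for all by Proposition \ref{prop3n2} (invertibility of $\Delta_{m,m}^{[\omega]}$ for $\omega\ge 0$). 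If you carry your cascade through to completion in general $n$, you will in effect be re-deriving that lemma; as it stands, your reverse inclusion is an outline that still needs that missing piece of linear algebra to be stated and proved.
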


\noindent
The proof of Lemma \ref{lem31} utilizes explicit
calculations involving several families of special matrices. These
matrices are reviewed beforehand for the reader's convenience.
First, given any $m\in \N$ and $\omega \in \C$, consider the diagonal matrix
$$
D_m(\omega) = \mbox{\rm diag}\, [1,\omega , \ldots , \omega^{m-1}] \in \C^{m\times m} \, , 
$$
for which $D_m (\omega) \in \R^{m\times m}$ whenever $\omega \in
\R$, as well as the nilpotent Jordan block of size $m$,
$$
J_m = \left[ 
\begin{array}{ccccc}
0 & 1 & 0 & \cdots & 0 \\
\vdots & \ddots & \ddots &   &  \vdots \\
&  &  & \ddots & 0 \\
\vdots  & & & \ddots  & 1\\
0 & \cdots & & \cdots & 0
\end{array}
\right]\in \R^{m\times m} .
$$
Clearly, $D_m (1)= \mbox{\rm id}_{\R^m} =:I_m$, and $D_m(\omega )^{-1}
= D_m (\omega^{-1})$ whenever $\omega \ne 0$, but also
\begin{equation}\label{eq3n1}
D_m(\omega)^{-1} \enspace \mbox{\rm and} \enspace
\omega^{1-m} D_m(\omega) \enspace \mbox{\rm are bounded (in fact, converge) as }|\omega|\to +\infty \, .
\end{equation}
Moreover, recall that $J_m^m=0$, and hence
$$
e^{tJ_m} = I_m + t J_m + \ldots + \frac{t^{m-1}}{(m-1)!} J_m^{m-1}
\quad \forall t \in \R \, .
$$
A simple lower bound for the size of $e^{tJ_m}x$ is as follows.

\begin{prop}\label{prop3n1}
For every $m\in \N$ and norm $\|\, \cdot \,\|$ on $\R^m$ there exists a
$\nu \in \R^+$ such that
$$
\left\|
e^{tJ_m} x
\right\| \ge \frac{\nu \|x\|}{\sqrt{1 + t^{2m-2}}} \quad \forall t \in
\R, x \in \R^m \, .
$$
\end{prop}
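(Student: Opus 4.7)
The plan is to deduce the lower bound from the trivial identity $x=e^{-tJ_m}e^{tJ_m}x$. Since all norms on $\R^m$ (and on $\R^{m\times m}$) are equivalent, fix an operator norm $\|\cdot\|_{\mathsf{op}}$ subordinate to the given $\|\cdot\|$. Submultiplicativity yields
$$
\|x\| = \|e^{-tJ_m}e^{tJ_m}x\| \le \|e^{-tJ_m}\|_{\mathsf{op}}\, \|e^{tJ_m}x\|,
$$
so everything reduces to establishing an upper bound of the form $\|e^{-tJ_m}\|_{\mathsf{op}} \le C\sqrt{1+t^{2m-2}}$ with a constant $C = C(m,\|\cdot\|)$ independent of $t$; setting $\nu := 1/C$ then gives the claim.

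To bound $\|e^{-tJ_m}\|_{\mathsf{op}}$, I would use that $J_m^m = 0$ truncates the exponential series to the finite sum
$$
e^{-tJ_m} = \sum_{k=0}^{m-1} \frac{(-t)^k}{k!}\, J_m^k.
$$
Letting $\kappa := \max_{0 \le k \le m-1} \|J_m^k\|_{\mathsf{op}}/k!$, the triangle inequality gives
$$
\|e^{-tJ_m}\|_{\mathsf{op}} \le \kappa \sum_{k=0}^{m-1}|t|^k \le m\kappa \bigl(1 + |t|^{m-1}\bigr).
$$

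Finally, the elementary inequality $(1+s)^2 \le 2(1+s^2)$ (for $s \ge 0$), applied with $s = |t|^{m-1}$, gives $1+|t|^{m-1} \le \sqrt{2}\sqrt{1+t^{2m-2}}$, so
$$
\|e^{-tJ_m}\|_{\mathsf{op}} \le m\kappa\sqrt{2}\sqrt{1+t^{2m-2}} \quad \forall t \in \R,
$$
and $\nu := (m\kappa\sqrt{2})^{-1}$ works. No step is genuinely difficult; the only conceptual point is recognizing that the inverse $e^{-tJ_m}$ is again a polynomial of degree $m-1$ in $t$, which is precisely why the lower bound on $\|e^{tJ_m}x\|$ decays no faster than $|t|^{-(m-1)}$.
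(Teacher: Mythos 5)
Your proof is correct. The paper itself states Proposition~\ref{prop3n1} without proof, deferring it (along with other auxiliary facts) to the supplementary document \cite{Wsupp}, so there is no in-paper argument to compare against; but your route --- rewriting $x = e^{-tJ_m}e^{tJ_m}x$, bounding $\|e^{-tJ_m}\|$ by a polynomial of degree $m-1$ using $J_m^m=0$, and then comparing $1+|t|^{m-1}$ with $\sqrt{1+t^{2m-2}}$ --- is the natural one, and every step checks out, including the verification that $\kappa \ge 1$ so that $\nu$ is indeed positive.
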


\noindent
Next, recall that the function $1/\Gamma$, the reciprocal of the Euler Gamma
function, is entire \cite[Ch.6]{AS}. In particular, given any $m,n\in \N$ and $\omega
\in \C$, the Toeplitz-type matrix
$$
\Delta_{m,n}^{[\omega ]} : = \left[
\begin{array}{cccc}
1/\Gamma (\omega  + 1) & 1/\Gamma (\omega  + 2) & \cdots & 1/\Gamma (\omega
+ n) \\
1/\Gamma (\omega   ) & 1/\Gamma (\omega + 1) & \cdots  & 1/\Gamma (\omega 
-1 + n ) \\
\vdots & \vdots &  & \vdots \\
1/\Gamma (\omega  - m + 2) & 1/\Gamma (\omega  - m + 3) & \cdots &
1/\Gamma (\omega  - m + n + 1)
\end{array}
\right]\in \C^{m\times n}
$$
is well-defined, each of its entries depending analytically on
$\omega $. Note that $\Delta_{m,n}^{[\omega ]}\in \R^{m \times n}$
whenever $\omega \in \R$, and $\Delta_{m,n}^{[\omega ]}$ is upper triangular
(respectively, the zero matrix) if and only if $\omega $ is an integer
$\le 0$ (an integer $\le - n$). Also, in
the case of a square matrix, the function $ \det
\Delta_{m,m}^{[\, \cdot \, ]}$ is entire and not constant, and hence
$\Delta_{m,m}^{[\omega ]}$ is invertible for most $\omega $.

\begin{prop}\label{prop3n2}
Let $m\in \N$ and $\omega \in \C$. Then
$$
\det \Delta_{m,m}^{[\omega ]} = \prod\nolimits_{j=1}^m \frac{\Gamma
  (j)}{\Gamma (\omega  + j)} \, ;
$$
in particular, $\Delta_{m,m}^{[\omega ]} $ is invertible unless $\omega $ is
a negative integer.
\end{prop}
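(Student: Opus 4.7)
The plan is to strip the pole structure out of each column of $\Delta_{m,m}^{[\omega ]}$ and thereby reduce the problem to evaluating a standard Vandermonde determinant. Pulling the factor $1/\Gamma (\omega +j)$ out of the $j$-th column gives
$$
\det \Delta_{m,m}^{[\omega ]} = \prod\nolimits_{j=1}^m \frac{1}{\Gamma (\omega +j)} \cdot \det N \, ,
$$
where $N \in \C^{m\times m}$ has entries
$$
N_{ij} = \frac{\Gamma (\omega +j)}{\Gamma (\omega + j - i + 1)} = (\omega +j- i+1)(\omega +j-i+2) \cdots (\omega +j-1) \, ,
$$
the product on the right being empty (and hence equal to $1$) when $i=1$; this identity comes from iterating $\Gamma (z+1) = z \Gamma (z)$.

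Introducing the abbreviation $x_j := \omega +j$ for $j=1,\ldots ,m$, I would next observe that $N_{ij} = p_{i-1}(x_j)$, where $p_k (x) := (x-k)(x-k+1)\cdots (x-1)$ is a \emph{monic} polynomial of degree $k$ in $x$. Because the polynomials $p_0, p_1, \ldots , p_{m-1}$ have strictly increasing degrees with leading coefficient $1$, suitable elementary row operations (which leave $\det N$ unchanged) turn $N$ into the ordinary Vandermonde matrix $(x_j^{i-1})_{i,j=1}^m$. Consequently
$$
\det N = \prod\nolimits_{1 \le i < j \le m} (x_j - x_i) = \prod\nolimits_{1 \le i < j \le m}(j -i) = \prod\nolimits_{j=2}^m (j-1)! = \prod\nolimits_{j=1}^m \Gamma (j) \, .
$$

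Combining the two displays yields the asserted product formula. The invertibility claim is then an immediate corollary: by the formula, $\det \Delta_{m,m}^{[\omega ]} \ne 0$ holds iff $\Gamma (\omega +j)$ is finite for every $j \in \{1, \ldots , m\}$, which fails precisely when $\omega +j \in \{0, -1, -2, \ldots \}$ for some such $j$, that is, exactly when $\omega$ is a negative integer. There is no substantive obstacle in this plan; the only slightly non-routine step is recognising the Vandermonde structure hidden in $N$ once the columns of $\Delta_{m,m}^{[\omega ]}$ have been normalised, and this recognition is transparent from the explicit product representation of $N_{ij}$ above.
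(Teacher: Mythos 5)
Your proof is correct and complete. The paper does not include a proof of Proposition \ref{prop3n2} --- it is listed among the ``elementary facts of an auxiliary nature stated without proof'' and delegated to the supplementary document \cite{Wsupp} --- so there is no in-text argument to compare against. That said, your Vandermonde reduction is the natural (and almost certainly the intended) route. Two points worth making explicit: first, the column factorization $1/\Gamma(\omega+j-i+1) = N_{ij}\cdot 1/\Gamma(\omega+j)$ should be justified via the entire-function identity $1/\Gamma(z) = z(z+1)\cdots(z+k-1)\cdot 1/\Gamma(z+k)$ (valid for \emph{all} $z\in\C$, including at the poles of $\Gamma$), rather than by literal division of Gamma values, since $\Gamma(\omega+j)$ may be infinite; this makes both sides of your decomposition entire in $\omega$ and the identity $\det\Delta_{m,m}^{[\omega]}=\bigl(\prod_j 1/\Gamma(\omega+j)\bigr)\det N$ unconditional. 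Second, in the invertibility corollary your phrasing ``iff $\Gamma(\omega+j)$ is finite'' is informal --- the precise statement is that the determinant vanishes iff $1/\Gamma(\omega+j)=0$ for some $j\in\{1,\dots,m\}$, which (already for $j=1$) is equivalent to $\omega$ being a negative integer. With these cosmetic clarifications the argument is airtight.
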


To appreciate the usefulness of the matrices $D_m$ and $\Delta_{m,n}^{[\omega]}$
in the study of linear flows, note that 
$$
e^{tJ_m} =  \left[ 
\begin{array}{ccccc}
1 & t &   & \cdots &\displaystyle\frac{ t^{m-1}}{(m-1)!} \\
0 & \ddots & \ddots &   &  \vdots \\
&  &  & \ddots &    \\
\vdots  & & & \ddots  & t\\
0 & \cdots & & 0  & 1
\end{array}
\right] = D_m(t)^{-1} \Delta_{m,m}^{[0]} D_m(t) \quad \forall t \in \R
\setminus \{0 \}\, .
$$
More generally, for any $1\le j \le m$ and $t\ne 0$, the $m\times m$-matrix $e^{tJ_m}$ can be
partitioned as
\begin{equation}\label{eq3n2}
e^{tJ_m} = \left[ 
\begin{array}{c|c}
D_j(t)^{-1} \Delta_{j,m-j}^{[0]}  D_{m-j} (t) & t^{m-j}
D_{j}(t)^{-1}\Delta_{j,j}^{[m-j]} D_j(t) \\[1mm] \hline \\[-4mm] t^{-j} D_{m-j}(t)^{-1} \Delta_{m-j,m-j}^{[-j]} D_{m-j}(t) & t^{m-2j}
D_{m-j}(t)^{-1} \Delta_{m-j,j}^{[m-2j]} D_j(t)
\end{array}
\right]  .
\end{equation}

\begin{proof}[Proof of Lemma \ref{lem31}] For simplicity, suppress the symbols $(\Phi, X)$ in all cores, i.e., write
  $C$ instead of $C(\Phi, X)$ etc. Note that if 
$\mbox{\rm dim}\, X \le  1$ then $C_0^*  = C_0  = \{ 0
\}$, whereas $C^* = C $ equals $\{0\}$ or $X$,
depending on whether $\Phi\ne  0$ or $\Phi =  0$. Thus the lemma holds
if $\mbox{\rm dim}\, X \le 1$. Henceforth assume $\mbox{\rm
  dim}\, X \ge 2$, and let $(b_1, \ldots , b_{{\rm dim} \, X})$ be an ordered
basis of $X$ relative to which $A^{\Phi}$ is a single real Jordan
block. Throughout, no notational distinction is made between linear operators
on (respectively, elements of) $X$ on the one hand, and their coordinate matrices
(column vectors) relative to $(b_j)$ on the other hand.

Assume for the time being that $\sigma (\Phi) = \{ a \}$ with $a\in
\R$, and hence $A^{\Phi} = a I_{{\rm dim}\, X} + J_{{\rm dim} \, X}$. In
this case,
\begin{equation}\label{eq3n3}
\|\Phi_t x\| = \|e^{ta} e^{tJ_{{\rm dim} \, X}} x\| \ge e^{ta}
\frac{\nu \|x\|}{\sqrt{1 + t^{2\, {\rm dim} \, X - 2}}} \quad \forall t \in \R
, x\in X \, ,
\end{equation}
by Proposition \ref{prop3n1}. Pick any $x\in C$. If $a\ne 0$
and $(\Phi_{t_n} x_n)$ is bounded for appropriate sequences
$(t_n)$ and $(x_n)$ with $at_n \to +\infty$ and $x_n \to x$, then
(\ref{eq3n3}) implies that $x=0$. Thus $C = \{0\}$ whenever
$a\ne 0$, and only the case of $a=0$ has to be considered further.

Assume first that $\mbox{\rm dim}\, X$ is {\em odd}, say $\mbox{\rm
  dim}\, X = 2d+1$ with $d\in \N$. Letting $m=2d+1$, deduce from
(\ref{eq3n2}) with $j=d+1$ that for all $t\ne 0$,
\begin{equation}\label{eq3n4A}
\Phi_t  =  \left[ 
\begin{array}{c|c}
D_{d+1}(t)^{-1} \Delta_{d+1,d}^{[0]} D_{d} (t) & t^{d}
D_{d+1}(t)^{-1}\Delta_{d+1,d+1}^{[d]} D_{d+1}(t) \\ [1mm] \hline \\[-4mm] 0 & t^{-1}
D_{d}(t)^{-1} \Delta_{d,d+1}^{[-1]} D_{d+1}(t)
\end{array}
\right] \, ,
\end{equation}
because $\Delta_{d,d}^{[-d-1]}=0$, whereas with $j=d$,
\begin{equation}\label{eq3n4B}
\Phi_t =   \left[ 
\begin{array}{c|c}
D_{d}(t)^{-1} \Delta_{d,d+1}^{[0]} D_{d+1} (t) & t^{d+1}
D_{d}(t)^{-1}\Delta_{d,d}^{[d+1]} D_{d}(t) \\ [1mm] \hline \\[-4mm]
t^{-d} D_{d+1}(t)^{-1} \Delta_{d+1,d+1}^{[-d]} D_{d+1}(t)
 & t D_{d+1}(t)^{-1} \Delta_{d+1,d}^{[1]} D_{d}(t)
\end{array}
\right] \, .
\end{equation}
Let $V=\mbox{\rm span}\, \{b_1, \ldots , b_d\}$, pick any $x =
\left[  \begin{array}{c} v  \\  \hline \\[-5mm] 0 \end{array}
\right]\in V$ with $v 
  \in \R^d$, and consider
$$
x_t : = \left[
\begin{array}{c}
v  \\  \hline \\[-4mm] - t^{-d} D_{d+1}(t)^{-1} \bigl(\Delta_{d+1,d+1}^{[d]}\bigr)^{-1}
\Delta_{d+1,d}^{[0]} D_d(t) v  
\end{array}
\right] \quad \forall t \in \R \setminus \{ 0 \}\, .
$$
(Recall that $\Delta_{d+1,d+1}^{[d]}$ is invertible by Proposition
\ref{prop3n2}.) From (\ref{eq3n1}), it is clear that $\lim_{|t|\to
  +\infty} x_t = x$, and together with the expression
for $\Phi_t$ in (\ref{eq3n4A}) also 
$$
\Phi_t x_t = \left[
\begin{array}{c}
0 \\  \hline \\[-4mm]  - t^{-(d+1)} D_{d}(t)^{-1} \Delta_{d,d+1}^{[-1]} \bigl( \Delta_{d+1,d+1}^{[d]}\bigr)^{-1}
\Delta_{d+1,d}^{[0]} D_d(t) v  
\end{array}
\right] \stackrel{|t|\to +\infty}{\longrightarrow} 0 \, .
$$
Thus $x\in C_0^*$. Since $x\in V$ was arbitrary, $V
\subset C_0^* $. Conversely, given any $x =
\left[ \begin{array}{c} v  \\  \hline \\[-5mm]  w   \end{array} \right]\in
C_0 $, with $v  \in \R^d$, $w  \in \R^{d+1}$, there exist
sequences $(t_n)$, $(v _n)$, and $(w _n)$ with $t_n \to +\infty$,
$v _n \to v $, and $w _n \to w $ such that
\begin{equation}\label{eq3n1xx}
\Phi_{t_n} \left[  \begin{array}{c} v _n \\  \hline \\[-5mm]  w _n  \end{array}
  \right] = 
\left[
\begin{array}{c}
D_{d+1}(t_n)^{-1} \bigl( \Delta_{d+1,d}^{[0]} D_{d}(t_n)v _n + t_n^d
\Delta_{d+1,d+1}^{[d]} D_{d+1}(t_n) w _n\bigr)  \\[1mm]  \hline \\[-5mm] 
\ldots  
\end{array} 
\right] \to 0 \, .
\end{equation}
Recall from (\ref{eq3n1}) that $\bigl( t_n^{-d} D_{d+1}(t_n)\bigr)$
converges, and apply these matrices to the first component of (\ref{eq3n1xx}) to
obtain 
$$
t_n^{-d} \Delta_{d+1,d}^{[0]} D_{d}(t_n)v _n + \Delta_{d+1,d+1}^{[d]}
D_{d+1}(t_n) w _n \to 0 \, .
$$
With (\ref{eq3n1}) also $t_n^{-d} \Delta_{d+1,d}^{[0]} D_{d}(t_n)v
_n\to 0$, and hence $\Delta_{d+1,d+1}^{[d]}
D_{d+1}(t_n) w _n \to 0$. Since $\Delta_{d+1,d+1}^{[d]}$ is invertible
and $\bigl( D_{d+1}(t_n)^{-1}\bigr)$ converges, $w _n \to 0 = w $,
i.e., $x\in V$. As $x\in C_0 $ was arbitrary, $C_0 \subset V$, and
hence $C_0^*  = C_0   = V$; note that $\mbox{\rm dim}\, V = d =
\lfloor \frac12 \, \mbox{\rm dim}\, X \rfloor$.

Next, given any $x=
\left[   \begin{array}{c} w  \\  \hline \\[-5mm]  0 \end{array} \right]\in V
\oplus \mbox{\rm span}\, \{ b_{d+1}\}$, with $w  \in \R^{d+1}$,
consider
$$
x_t : = \left[
\begin{array}{c}
w  \\  \hline \\[-4mm]  - t^{-(d+1)} D_{d}(t)^{-1} \bigl( \Delta_{d,d}^{[d+1]}\bigr)^{-1}
\Delta_{d,d+1}^{[0]} D_{d+1}(t) w  
\end{array}
\right] \quad \forall t \in \R \setminus \{0\} \, ,
$$
which again is well-defined as $\Delta_{d,d}^{[d+1]}$ is
invertible. As before, (\ref{eq3n1}) implies $\lim_{|t|\to
  +\infty} x_t = x$, and together with the expression for
$\Phi_t$ in (\ref{eq3n4B}) also shows that
$$
\Phi_t x_t = \left[
\begin{array}{c}
0  \\  \hline \\[-4mm]   t^{-d} D_{d+1}(t)^{-1} \bigl( \Delta_{d+1,d+1} ^{[-d]} -
\Delta_{d+1,d}^{[1]} \bigl( \Delta_{d,d}^{[d+1]}\bigr) ^{-1}
\Delta_{d,d+1}^{[0]} \bigr) D_{d+1}(t) w  
\end{array}
\right] 
$$
converges as $|t|\to +\infty$, and hence
$x\in C^*$. Thus $ V\oplus \mbox{\rm span}\, \{ b_{d+1}\} \subset C^*$. Conversely, given any
 $x = \left[   \begin{array}{c} w \\  \hline \\[-5mm]  v    \end{array}  \right]\in
C$, there exist sequences $(t_n)$, $(w _n)$, and $(v _n)$ with $t_n \to +\infty$,
$w _n \to w $, and $v _n \to v $ such that
\begin{equation}\label{eq3n1yy}
\Phi_{t_n} \left[   \begin{array}{c} w _n \\  \hline \\[-5mm]   v
    _n   \end{array} \right] = 
\left[
\begin{array}{c}
D_{d}(t_n)^{-1} \bigl( \Delta_{d,d+1}^{[0]} D_{d+1}(t_n)w _n + t_n^{d+1}
\Delta_{d,d}^{[d+1]} D_{d}(t_n) v _n\bigr) \\[1mm]  \hline \\[-5mm] 
\ldots  
\end{array}
\right] 
\end{equation}
is bounded as $n\to \infty$. Since $t_n^{-(d+1)} D_{d}(t_n)\to 0$,
applying these matrices to the first component of (\ref{eq3n1yy}) yields
$$
t_n^{-(d+1)} \Delta_{d,d+1}^{[0]} D_{d+1}(t_n)w_n + \Delta_{d,d}^{[d+1]}
D_{d}(t_n) v _n \to 0 \, .
$$
As before, also $\Delta_{d,d}^{[d+1]}
D_{d}(t_n) v _n \to 0$, and hence $v _n \to 0 = v $,
i.e., $x\in  V \oplus \mbox{\rm span}\, \{ b_{d+1}\} $. In summary, $C^*  = C =  V\oplus \mbox{\rm span}\, \{ b_{d+1}\}$.
This establishes the lemma when $\sigma (\Phi)\subset \R$
and $\mbox{\rm dim}\, X$ is odd, as $\mbox{\rm dim}\, V \oplus \mbox{\rm
  span}\, \{ b_{d+1}\}  = d+1 = \lceil \frac12 \, \mbox{\rm dim}\, X \rceil$.

The case of $\mbox{\rm dim}\, X$ {\em even}, say $\mbox{\rm dim}\, X =
2d$, is similar but simpler: In this case, (\ref{eq3n2}) with $m=2d$, $j=d$
yields
$$
\Phi_t  =  \left[ 
\begin{array}{c|c}
D_{d}(t)^{-1} \Delta_{d,d}^{[0]} D_{d} (t) & t^{d}
D_{d}(t)^{-1}\Delta_{d,d}^{[d]} D_{d}(t) \\ [1mm] \hline \\[-4mm] 0 &
D_{d}(t)^{-1} \Delta_{d,d}^{[0]} D_{d}(t)
\end{array}
\right] \quad \forall t \in \R \setminus \{0 \} \, .
$$
On the one hand, if  $x =
\left[   \begin{array}{c} v  \\  \hline \\[-5mm]  0 \end{array} \right]\in V$ with $v 
  \in \R^d$, then
$$
x_t : = \left[
\begin{array}{c}
v  \\  \hline \\[-4mm]   - t^{-d} D_{d}(t)^{-1} \bigl( \Delta_{d,d}^{[d]}\bigr)^{-1}
\Delta_{d,d}^{[0]} D_d(t) v  
\end{array}
\right] \stackrel{|t|\to +\infty}{\longrightarrow}  x \, ,
$$
by (\ref{eq3n1}), but also
$$
\Phi_t x_t = \left[
\begin{array}{c}
0\\  \hline \\[-4mm]  - t^{-d} D_{d}(t)^{-1} \Delta_{d,d}^{[0]} \bigl( \Delta_{d,d}^{[d]}\bigr)^{-1}
\Delta_{d,d}^{[0]} D_d(t) v  
\end{array}
\right] \stackrel{|t|\to +\infty}{\longrightarrow} 0 \, ,
$$
showing that $V \subset C_0^*$. On the other hand, if $x =
\left[   \begin{array}{c} u  \\  \hline \\[-5mm]  v  \end{array} \right]\in
C$ with $u, v   \in \R^d$, then there exist
sequences $(t_n)$, $(u _n)$, and $(v_n)$ with $t_n \to +\infty$,
$u _n \to u $, and $v_n \to v$, such that
$$
\Phi_{t_n} \left[   \begin{array}{c} u _n \\  \hline \\[-5mm]
    v_n  \end{array}  \right] = 
\left[
\begin{array}{c}
D_{d}(t_n)^{-1} \bigl( \Delta_{d,d}^{[0]} D_{d}(t_n)u _n + t_n^d
\Delta_{d,d}^{[d]} D_{d}(t_n) v_n\bigr)  \\[1mm]  \hline \\[-5mm] 
\ldots  
\end{array}
\right] 
$$
is bounded as $n\to \infty$. Applying $t_n^{-d} D_d(t_n)\to 0$ to the first component
yields $v_n \to 0 = v$, as before, and hence $x\in V$. In
summary, $C_0^* = C^* = C_0 = C = V$. Noting that
$\mbox{\rm dim}\, V = d = \frac12\, \mbox{\rm dim}\, X$ establishes the lemma when $\sigma (\Phi)\subset \R$
and $\mbox{\rm dim}\, X$ is even.

Finally, it remains to consider the case of $\sigma (\Phi) = \{ a \pm
\imath b\}$ with $a\in \R$, $b\in \R^+$. Since $\mbox{\rm dim}\, X$ is
even in this case, let $m = \frac12 \, \mbox{\rm dim}\, X$. Then
$
A^{\Phi} = a I_{2m} + \left[
\begin{array}{r|r}
J_m & - bI_m \\ \hline
bI_m & J_m
\end{array}
\right]$,
which in turn yields
\begin{equation}\label{eq3nzz}
\Phi_t = e^{ta} \left[ \begin{array}{r|r}
\cos (bt) I_m & - \sin (bt) I_m \\  \hline \\[-5mm]
\sin (bt) I_m &  \cos (bt) I_m
\end{array}
\right] 
\left[
\begin{array}{c|c}
e^{tJ_m} & 0 \\ \hline  \\[-5mm]
0  & e^{tJ_m}
\end{array}
\right] \quad \forall t\in \R \, .
\end{equation}
From (\ref{eq3nzz}) and Proposition \ref{prop3n1}, it is clear that,
with an appropriate $\widetilde{\nu} \in \R^+$,
$$
\|\Phi_t x\| \ge e^{ta} \frac{\widetilde{\nu } \|x\|}{\sqrt{1 + t^{2m-2}}} \quad
\forall t\in \R , x\in X \, .
$$
As before, it follows that $C = \{0\}$ unless $a=0$, so only that case has to be analyzed further. This analysis is virtually identical to the one above, simply because the
left matrix on the right-hand side of (\ref{eq3nzz}) does not in any
way affect boundedness or
convergence to $0$ of $\Phi_t x$: On the one hand, if $m=2d+1$ then,
with $W = \mbox{\rm span}\, \{b_1, \ldots , b_d, b_{m+1}, \ldots ,
b_{m+d}\}$,
$$
C_0^* = C_0 = W \, , \quad C^* = C = W \oplus \mbox{\rm span}\,
\{b_{d+1}, b_{m+d+1}\} \, .
$$
On the other hand, if $m=2d$ then
$C_0^* = C_0 = C^* = C = W$.
In either case, $\mbox{\rm dim}\, W = 2d = 2 \lfloor \frac14 \, \mbox{\rm
dim}\, X \rfloor$ and $\mbox{\rm dim}\, W\oplus \mbox{\rm span}\,
\{b_{d+1}, b_{m+d+1}\} = 2d+2 = 2 \lceil \frac14 \, \mbox{\rm
dim}\, X \rceil$.
\end{proof}

Given any $\Phi$-invariant subspace $Z$ of $X$, denote by $\Phi_Z$ the linear
flow induced by $\Phi$ on $Z$, that is, $\Phi_Z(t,x) = \Phi_t x$ for all
$(t,x)\in \R \times Z$. Note that if $X = \bigoplus_{j=1}^{\ell} Z_j$ with
$\Phi$-invariant subspaces $Z_1, \ldots , Z_{\ell}$, then $\Phi$ is
flow equivalent to the linear flow $\bigtimes_{j=1}^{\ell}
\Phi_{Z_j}$ on $\bigtimes_{j=1}^{\ell} Z_j$, via the linear isomorphism $h(x)= (P_1 x, \ldots, P_{\ell}
x)$ and $\tau_x = \mbox{\rm id}_{\R}$ for all $x\in X$; here $P_j$
denotes the linear projection of $X$ onto $Z_j$ along $\bigoplus_{k\ne j}
Z_k$. With this, an immediate consequence of Lemma \ref{lem31}
announced earlier is 

\begin{theorem}\label{thm3xx}
Let $\Phi$ be a linear flow on $X$. Then $C^*(\Phi, X)= C (\Phi, X)$, $C_0^*(\Phi, X) = C_0(\Phi, X)$, and both sets are
$\Phi$-invariant subspaces of $X^{\Phi}_{\sf C}$.
\end{theorem}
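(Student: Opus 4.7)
My strategy is to reduce to the irreducible case via the real Jordan decomposition and then combine Lemma~\ref{lem31} with the product behaviour of cores. Specifically, I would begin by decomposing $X = \bigoplus_{j=1}^{\ell} Z_j$ into $\Phi$-invariant subspaces on each of which $\Phi_{Z_j}$ acts irreducibly; this is nothing more than the real Jordan decomposition of the generator $A^{\Phi}$. As recorded just before the theorem, the linear isomorphism $H:X \to \bigtimes_{j=1}^{\ell} Z_j$, $Hx = (P_1 x, \ldots, P_\ell x)$, realises a linear flow equivalence between $\Phi$ and the product flow $\widetilde{\Phi} := \bigtimes_{j=1}^{\ell} \Phi_{Z_j}$, and plainly $H(0)=0$.

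Next I would chain together three product-type facts. By Lemma~\ref{prop27}, together with the observation recorded just after Lemma~\ref{prop26} that linear flow equivalence preserves $C^*$ and (since $H(0)=0$) also $C_0^*$, the isomorphism $H$ carries each of $C(\Phi,X)$, $C_0(\Phi,X)$, $C^*(\Phi,X)$, $C_0^*(\Phi,X)$ bijectively onto the corresponding core of $\widetilde{\Phi}$. By Lemma~\ref{prop26}, $C^*(\widetilde{\Phi},\bigtimes_j Z_j) = \bigtimes_j C^*(\Phi_{Z_j},Z_j)$, with the same equality for $C_0^*$. By (\ref{eq23}) and its $C_0$-analogue, $C(\widetilde{\Phi},\bigtimes_j Z_j) \subset \bigtimes_j C(\Phi_{Z_j},Z_j)$, and similarly for $C_0$. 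Finally, Lemma~\ref{lem31} supplies $C^*(\Phi_{Z_j},Z_j) = C(\Phi_{Z_j},Z_j)$ and $C_0^*(\Phi_{Z_j},Z_j) = C_0(\Phi_{Z_j},Z_j)$ for each $j$. Threading these with the trivial inclusions $C^* \subset C$ and $C_0^* \subset C_0$ forces equality throughout, yielding $C^*(\Phi,X) = C(\Phi,X)$ and $C_0^*(\Phi,X) = C_0(\Phi,X)$.

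It remains to establish the subspace structure and the inclusion in $X^{\Phi}_{\sf C}$. The explicit Jordan-basis descriptions inside the proof of Lemma~\ref{lem31} present each $C^*(\Phi_{Z_j},Z_j)$ as a coordinate subspace of $Z_j$, hence $\bigtimes_j C^*(\Phi_{Z_j},Z_j)$ is a linear subspace of $\bigtimes_j Z_j$, and linearity of $H^{-1}$ pulls it back to a subspace of $X$; the analogous reasoning applies to $C_0$. For the containment in the central subspace, I would use that by construction $X^{\Phi}_{\sf C}$ is the direct sum of those $Z_j$ whose spectrum lies in $\imath\R$; for every remaining $j$, Lemma~\ref{lem31} gives $C(\Phi_{Z_j},Z_j) = \{0\}$, so only the central summands contribute and $C(\Phi,X) \subset X^{\Phi}_{\sf C}$ follows.

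The main obstacle is conceptual bookkeeping rather than novel technique: one has to disentangle cores of the ambient flow $\Phi$, of the product model $\widetilde{\Phi}$, and of each irreducible factor $\Phi_{Z_j}$. It is precisely the failure of the non-uniform core to behave well under products (Example~\ref{ex25}) that makes it essential to sandwich $C$ between $C^*$ and $\bigtimes_j C(\Phi_{Z_j},Z_j)$ rather than attempt a direct product formula for $C$; once this sandwich is in place the chain collapses immediately.
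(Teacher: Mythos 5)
Your proposal is correct and follows the paper's proof essentially verbatim: both decompose $X$ into $\Phi$-invariant irreducible summands via the real Jordan form, transfer all four cores through the linear flow equivalence $H$, and then sandwich $C$ between $C^*$ and $\bigtimes_j C(\Phi_{Z_j},Z_j)$ by chaining Lemma~\ref{prop27}, the inclusion~(\ref{eq23}), Lemma~\ref{lem31}, Lemma~\ref{prop26}, and~(\ref{eq21}). The subspace structure and containment in $X^{\Phi}_{\sf C}$ via $C(\Phi_{Z_j},Z_j)=\{0\}$ for non-central factors also match the paper's concluding step.
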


\noindent
{\em Proof.} Let $X = \bigoplus_{j=1}^{\ell} Z_j$ be such that each flow $\Phi_{Z_j}$ is
irreducible. With $h$ as above,
\begin{align*}
C(\Phi, X) & = h^{-1}   C \left(  \bigtimes\nolimits_{j=1}^{\ell} \Phi_{Z_j} ,
    \bigtimes\nolimits_{j=1}^{\ell} Z_j\right)  \subset h^{-1} \left(
  \bigtimes\nolimits_{j=1}^{\ell} C(\Phi_{Z_j}, Z_j)\right) \\
& = h^{-1}  \left( \bigtimes\nolimits_{j=1}^{\ell} C^*(\Phi_{Z_j},
  Z_j)\right) =  h^{-1}  C^* \left(  \bigtimes\nolimits_{j=1}^{\ell} \Phi_{Z_j} ,
    \bigtimes\nolimits_{j=1}^{\ell} Z_j\right)  = C^*(\Phi, X) \, ,
\end{align*}
where, from left to right, the equalities are due to Lemmas
\ref{prop27}, \ref{lem31}, \ref{prop26}, and the fact that $\Phi$ and
$\bigtimes_{j=1}^{\ell} \Phi_{Z_j}$ are flow equivalent via $h$, respectively,
whereas the inclusion is  the $\ell $-factor analogue of (\ref{eq23}). With (\ref{eq21}), therefore, $C^*(\Phi, X) = C(\Phi,
X)$, and recalling that $h(0)=0$, also $C_0^* (\Phi, X) = C_0 (\Phi, X)$. Let $J= \{1\le
j\le \ell : \sigma (\Phi_{Z_j}) \subset \imath
\R\}$. By Lemma
\ref{lem31}, $C(\Phi_{Z_j}, Z_j)= \{0\}$ whenever $j\not \in J$, and consequently
$$
\qquad C(\Phi, X) =  h^{-1} \left(\bigtimes\nolimits_{j=1}^{\ell}
  C(\Phi_{Z_j}, Z_j)\right)  = \bigoplus\nolimits_{j\in J } \!\! C(\Phi_{Z_j}, Z_j)\subset
\bigoplus\nolimits_{j\in J} \!\! Z_j = X_{\sf C}^{\Phi} \, . \qquad \qed
$$

In light of Theorem \ref{thm3xx}, when dealing with linear flows only the symbols $C$ and
$C_0$ are used henceforth. Note that if $Z$ is a
$\Phi$-invariant subspace of $X$ then one may also consider
cores of the flow $\Phi_Z$, and this idea of restriction can be iterated. To
do so in a systematic way, given any binary sequence
$\epsilon = (\epsilon_{k })_{k \in \N_0}$, that is, $\epsilon_{k }\in \{0,1\}$
for all $k $, let $C^{\epsilon, -1} (\Phi, X)= X$ and, for every $k  \in
\N_0$, let
$$
C^{\epsilon, k } (\Phi, X) = \left\{
\begin{array}{cl}
C \bigl( \Phi_{C^{\epsilon, k  - 1}(\Phi, X)}, C^{\epsilon, k  - 1}(\Phi, X)\bigr) & \mbox{\rm
  if } \epsilon_{k } = 0 \, , \\[1mm]
C_0 \bigl( \Phi_{C^{\epsilon, k  - 1}(\Phi, X)}, C^{\epsilon, k  -
  1}(\Phi, X)\bigr) & \mbox{\rm
  if } \epsilon_{k } = 1 \, .
\end{array}
\right.
$$
Clearly $X \supset C^{\epsilon, 0}(\Phi, X) \supset   C^{\epsilon,
  1}(\Phi, X) \supset \cdots $, and hence the {\bf iterated core}
$$
C^{\epsilon} (\Phi, X) := \lim\nolimits_{k  \to \infty} C^{\epsilon,
k } (\Phi, X) = \bigcap\nolimits_{k \in \N_0} C^{\epsilon, k } (\Phi, X)
$$
is a $\Phi$-invariant subspace naturally
inheriting basic properties from $C(\Phi, X)$ and $C_0(\Phi, X)$.

\begin{lem}\label{lem35xx}
Let $\Phi, \Psi$ be linear flows on $X, Y$, respectively, and
$\epsilon$ a binary sequence.
\begin{enumerate}
\item If $\Phi$ is $(h,\tau)$-related to $\Psi$ then $h\bigl(
  C^{\epsilon} (\Phi, X)\bigr) = C^{\epsilon} (\Psi, Y)$.
\item $C^{\epsilon} (\Phi\times \Psi, X\times Y) = C^{\epsilon}(\Phi,
  X) \times C^{\epsilon} (\Psi, Y)$.
\end{enumerate}
\end{lem}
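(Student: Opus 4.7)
The plan is to prove both assertions simultaneously by induction on the truncation level $k$ of the iterated cores, using the three main ingredients already at hand: the behaviour of non-uniform cores under orbit equivalence (Lemma \ref{prop27}), the behaviour of uniform cores under products (Lemma \ref{prop26}), and the crucial identity $C^*(\Phi, X) = C(\Phi, X)$, $C_0^*(\Phi, X) = C_0(\Phi, X)$ for linear flows (Theorem \ref{thm3xx}). Since restrictions of a linear flow to invariant subspaces are again linear, the inductive step stays inside the class where Theorem \ref{thm3xx} applies, which is the whole point.

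For (i), set $X_k = C^{\epsilon, k}(\Phi, X)$ and $Y_k = C^{\epsilon, k}(\Psi, Y)$. I claim $h(X_k) = Y_k$ for every $k \ge -1$. The base case $k = -1$ is trivial since $X_{-1} = X$ and $Y_{-1} = Y$. Assume inductively that $h(X_{k-1}) = Y_{k-1}$. Restricting \eqref{eq1} to $\R \times X_{k-1}$ shows that $\Phi_{X_{k-1}}$ is $(h|_{X_{k-1}}, \tau|_{\R \times X_{k-1}})$-related to $\Psi_{Y_{k-1}}$ (the restricted $h$ remains a homeomorphism onto $Y_{k-1}$ by the inductive hypothesis). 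Lemma \ref{prop27} then yields $h(C(\Phi_{X_{k-1}}, X_{k-1})) = C(\Psi_{Y_{k-1}}, Y_{k-1})$, which handles the case $\epsilon_k = 0$; the case $\epsilon_k = 1$ follows from the same lemma together with Proposition \ref{prop30a}, which guarantees $h(0) = 0$. Taking intersections over $k$ gives $h(C^{\epsilon}(\Phi, X)) = C^{\epsilon}(\Psi, Y)$.

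For (ii), again proceed by induction on $k$, now setting $X_k \times Y_k = C^{\epsilon, k}(\Phi \times \Psi, X \times Y)$ on the one side and $C^{\epsilon, k}(\Phi, X) \times C^{\epsilon, k}(\Psi, Y)$ on the other. The base case $k = -1$ is trivial. Assuming the equality at level $k-1$, the restricted product flow $(\Phi \times \Psi)_{X_{k-1} \times Y_{k-1}}$ equals $\Phi_{X_{k-1}} \times \Psi_{Y_{k-1}}$. Theorem \ref{thm3xx} allows passage from $C$ (resp.\ $C_0$) to $C^*$ (resp.\ $C_0^*$) for each of these linear flows, Lemma \ref{prop26} then splits the uniform core of the product as a product of uniform cores, and a second application of Theorem \ref{thm3xx} converts back to non-uniform cores. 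This gives the inductive step for both choices $\epsilon_k \in \{0, 1\}$, and intersecting over $k$ concludes the proof.

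The only subtlety worth watching is bookkeeping: one must verify at each inductive step that $X_{k-1}$ really is a $\Phi$-invariant subspace (which it is, by the corresponding property of $C$ and $C_0$ recorded before Lemma \ref{prop27} and in Theorem \ref{thm3xx}), so that $\Phi_{X_{k-1}}$ is a bona fide linear flow on a finite-dimensional normed space and Theorem \ref{thm3xx} applies. No deeper obstacle is expected; the lemma is essentially a clean formal consequence of the three ingredients already assembled.
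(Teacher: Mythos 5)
Your proof is correct and follows essentially the same approach as the paper: induction on the truncation level $k$, restricting $h$ and $\tau$ to the nested invariant subspaces and invoking Lemma~\ref{prop27} together with Proposition~\ref{prop30a} for part (i), and shuttling between non-uniform and uniform cores via Theorem~\ref{thm3xx} so that Lemma~\ref{prop26} can split the product for part (ii). The paper's proof is more terse but structurally identical; your extra bookkeeping remarks (invariance of the restricted subspaces, taking intersections over $k$ at the end) only make explicit what the paper leaves implicit.
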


\begin{proof}
With Lemma \ref{prop27} and Proposition \ref{prop30a}, $h\bigl(
C^{\epsilon, 0} (\Phi, X)\bigr) = C^{\epsilon, 0}(\Psi, Y)$. By
induction, for every $k  \ge 1$, $\Phi_{C^{\epsilon, k  - 1}(\Phi,
  X)}$ is $(h_k, \tau_k)$-related to $\Psi_{C^{\epsilon, k  - 1}(\Psi,
  Y)}$, with $h_k$ and $\tau_k$ denoting the restrictions of $h$ and
$\tau$ to $C^{\epsilon, k-1} (\Phi, X)$ and $\R \times C^{\epsilon,k-1}
(\Phi, X)$ respectively. Hence $h\bigl(C^{\epsilon, k } (\Phi, X)\bigr) = C^{\epsilon,
  k }(\Psi, Y)$, which proves (i). Similarly, with Lemma \ref{prop26} and
Theorem \ref{thm3xx}, induction yields $C^{\epsilon, k } (\Phi\times
\Psi, X\times Y) = C^{\epsilon, k } (\Phi, X) \times C^{\epsilon,
  k } (\Psi, Y)$ for every $k\ge 1$, which establishes (ii).
\end{proof}

It is not hard to see that $C^{\epsilon} (\Phi, X) = \{0\}$ whenever
$\epsilon _{k } = 1$ for infinitely many $k $.
In what follows, therefore, only terminating binary sequences (i.e., $\epsilon_{k }
= 0$ for all large $k $) are of interest. Any such
sequence (uniquely) represents a non-negative
integer. More precisely, given any $n\in \N_0$, let $\epsilon (n)$
be the binary sequence of base-$2$ digits of $n$ in reversed (i.e., ascending) order, that is,
$$
n = \sum\nolimits_{k  = 0}^{\infty} 2^{k } \epsilon(n)_{k } 
\quad \forall n \in \N_0 \, ;
$$
thus, for instance, $\epsilon (4) = (0,0,1,0,0 ,\ldots)$ and $\epsilon
(13) = (1,0,1,1,0,0,\ldots )$. To understand the structure of
$C^{\epsilon (n)} (\Phi, X)$, first consider the case of an
irreducible flow.

\begin{lem}\label{lem35zz}
Let $\Phi$ be an irreducible linear flow on $X$.
\begin{enumerate}
\item If $\sigma(\Phi)\cap \imath \R = \varnothing$ then $C^{\epsilon
    (n)}(\Phi, X) = \{0\}$ for all $n\in \N_0$.
\item If $\sigma (\Phi) = \{0\}$ then
$
C^{\epsilon (n)} (\Phi, X) = \left\{
\begin{array}{cl}
\mbox{\rm Fix}\, \Phi & \mbox{if } n < \mbox{\rm dim}\, X \, , \\
\{0\} & \mbox{if } n \ge \mbox{\rm dim}\, X \, . \\
\end{array}
\right.
$
\item If $\sigma (\Phi) \subset \imath \R \setminus \{0\}$ then
$
C^{\epsilon (n)} (\Phi, X) = \left\{
\begin{array}{cl}
\mbox{\rm Per}\, \Phi & \mbox{if } n < \frac12 \, \mbox{\rm dim}\, X \, , \\[0.5mm]
\{0\} & \mbox{if } n \ge \frac12 \, \mbox{\rm dim}\, X \, . \\
\end{array}
\right.
$
\end{enumerate}
\end{lem}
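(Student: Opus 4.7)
The plan is to iterate Lemma \ref{lem31} recursively, keeping track of the explicit subspace structure and dimension at each step.

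Part (i) is immediate: Lemma \ref{lem31} gives $C(\Phi, X) = C_0 (\Phi, X) = \{0 \}$, so $C^{\epsilon(n), 0}(\Phi, X) = \{0\}$ regardless of $\epsilon(n)_0$, and consequently $C^{\epsilon(n)} (\Phi, X) = \{0\}$ for every $n\in \N_0$.

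For part (ii), fix a Jordan basis $(b_1, \ldots, b_m)$ of $X$ with $A^{\Phi} = J_m$ and $m = \mbox{\rm dim}\, X$. Inspection of the proof of Lemma \ref{lem31} reveals that the cores are \emph{initial segments} of this basis: $C(\Phi, X) = \mbox{\rm span}\{b_1, \ldots, b_{\lceil m/2\rceil}\}$ and $C_0(\Phi, X) = \mbox{\rm span}\{b_1, \ldots, b_{\lfloor m/2\rfloor}\}$. Each such subspace is $\Phi$-invariant, and the restriction of $A^{\Phi}$ to it is again a single nilpotent Jordan block, so the restricted flow is again irreducible with $\sigma = \{0\}$. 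A routine induction on $k$ then gives
$$
C^{\epsilon(n), k}(\Phi, X) = \mbox{\rm span}\{b_1, \ldots, b_{d_k}\} \, ,
$$
with $d_{-1} = m$ and, for $k\ge 0$, $d_k = \lceil d_{k-1}/2\rceil$ if $\epsilon(n)_k = 0$ and $d_k = \lfloor d_{k-1}/2\rfloor$ if $\epsilon(n)_k = 1$ (reading $\mbox{\rm span}\, \varnothing = \{0\}$ when $d_k = 0$). The crux is then the arithmetic claim that $d_k \ge 1$ iff $\sum_{j=0}^{k} 2^j \epsilon(n)_j < m$. Using the equivalences $\lceil d/2\rceil \ge a \iff d \ge 2a - 1$ and $\lfloor d/2\rfloor \ge a \iff d \ge 2a$, the condition $d_k \ge 1$ unwinds step by step into $d_{-1} = m \ge 1 + \sum_{j=0}^{k} 2^j \epsilon(n)_j$. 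Because $\epsilon(n)$ terminates, $\sum_{j=0}^{k} 2^j \epsilon(n)_j = n$ for all large $k$. Hence if $n < m$ the dimensions eventually stabilize at $1$ and the intersection equals the unique $1$-dimensional $\Phi$-invariant subspace $\mbox{\rm span}\{b_1\} = \mbox{\rm Fix}\, \Phi$, whereas if $n \ge m$ some $d_k$ reaches $0$ and the intersection is $\{0\}$.

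Part (iii) runs in parallel one level up. When $\sigma (\Phi) = \{\pm \imath b\}$ and $\mbox{\rm dim}\, X = 2m$, the same proof of Lemma \ref{lem31} exhibits the cores as paired initial segments of dimensions $2\lceil m/2\rceil$ and $2\lfloor m/2\rfloor$, on which the restricted flows are again irreducible with spectrum in $\imath \R \setminus \{0\}$. The same recursion for $d_k$ (now starting at $d_{-1} = m = \tfrac12 \mbox{\rm dim}\, X$) governs the half-dimension, the stable nontrivial state being the $2$-dimensional pure-rotation subspace $\mbox{\rm span}\{b_1, b_{m+1}\} = \mbox{\rm Per}\, \Phi$; this delivers the asserted dichotomy at threshold $n = \tfrac12 \mbox{\rm dim}\, X$.

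The main obstacle is bookkeeping: one must verify that at every iteration the restricted flow remains an irreducible Jordan-block flow on an initial-segment subspace (so Lemma \ref{lem31} reapplies unchanged), and then translate the ceiling/floor halving recursion into the claimed threshold expressed via the binary expansion of $n$.
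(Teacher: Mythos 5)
Your proposal is correct and follows essentially the same route as the paper: pass to the Jordan basis, read off the initial-segment structure of the cores from the proof of Lemma \ref{lem31}, observe the restricted flow is again an irreducible Jordan-block flow of the same type, and reduce to the arithmetic of the $\lceil\cdot/2\rceil$ and $\lfloor\cdot/2\rfloor$ recursion driven by the binary digits $\epsilon(n)_k$. The only cosmetic difference is in extracting the threshold: the paper computes the forward iterates $f_{\epsilon(n)_k}\circ\cdots\circ f_{\epsilon(n)_0}(n)$ explicitly, whereas you unwind the condition $d_k\ge 1$ backwards via the inverse inequalities for ceilings and floors; both arguments are valid and equivalent.
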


\begin{proof}
Recall from Lemma \ref{lem31} that $C(\Phi, X) = \{0\}$ whenever
$\sigma (\Phi)\cap \imath \R = \varnothing$, and in this case $C^{\epsilon
    (n)}(\Phi, X) = \{0\}$ for every $n\in \N_0$, proving (i).

To establish (ii) and (iii), let $(b_1, \ldots , b_{{\rm dim}\, X})$
be an ordered basis of $X$, relative to which $A^{\Phi}$ is a single
real Jordan block. If $\sigma(\Phi) = \{0\}$ consider the two
increasing functions $f_0, f_1 :\R \to \R$, given by
$f_0(s) = \lceil {\textstyle \frac12} s \rceil $ and $f_1 (s) = \lfloor
{\textstyle \frac12} s \rfloor$, respectively. Let 
$m_{k } = f_{\epsilon (n)_{k }} \circ \cdots \circ f_{\epsilon
  (n)_0} (\mbox{\rm dim}\, X) $. As seen in the proof of Lemma \ref{lem31}, $C^{\epsilon(n), k }(\Phi, X)= \mbox{\rm
  span}\, \{b_1, \ldots , b_{m_{k }}\}$ for every $k  \in \N_0$,
provided that $m_{k  }\ge 1$, and $C^{\epsilon(n), k }(\Phi,
X)=\{0\}$ otherwise. Note that
\begin{equation}\label{eq3p55x}
\lim\nolimits_{k  \to \infty} \underbrace{f_0 \circ \cdots \circ
  f_0}_{\mbox{\rm {\small $k$ times}}} (s) =
\left\{
\begin{array}{cl}
1 & \mbox{\rm if } s > 0 \, , \\
0 & \mbox{\rm if } s \le 0 \, .
\end{array}
\right.
\end{equation}
Consequently, $\epsilon(0) = (0,0,\ldots)$, and
$\lim_{k  \to \infty}m_{k } = 1$, so $C^{\epsilon(0) }(\Phi,
X)=\mbox{\rm span}\, \{b_1\}$. Henceforth, assume 
$n = \epsilon(n)_0 + 2\epsilon(n)_1 + \ldots + 2^{\ell}\epsilon(n)_{\ell}\ge 1$,
with $\ell \in \N_0$ and $\epsilon(n)_{\ell} = 1$. Notice that
$$
f_{\epsilon (n)_{k }} \circ \cdots \circ f_{\epsilon (n)_0} (n) =
\epsilon (n)_{k  + 1} + 2 \epsilon (n)_{k  + 2} + \ldots + 2^{\ell  -
  k  - 1} \epsilon (n)_{\ell} \quad \forall k = 0 , \ldots ,   \ell  -1 \, ,
$$
hence in particular $f_{\epsilon (n)_{\ell -1} }\circ \cdots \circ f_{\epsilon (n)_0}
  (n) =\epsilon (n)_{\ell} = 1$, which implies $f_{\epsilon (n)_{\ell } }\circ
    \cdots \circ f_{\epsilon (n)_0} (n) =0$. Since $f_0, f_1$ are
    increasing, $f_{\epsilon (n)_{\ell }} \circ \cdots \circ f_{\epsilon
      (n)_0} (i) \le 0$ for all $i\le n$, and since $\epsilon
    (n)_{k }=0$ for all $k  > \ell $, it follows from (\ref{eq3p55x})
    that
$\lim_{k  \to \infty} f_{\epsilon (n)_{k }} \circ \cdots
\circ f_{\epsilon (n)_0} (i) = 0$.
In particular, $C^{\epsilon (n)}(\Phi, X) = \{0\}$ whenever $\mbox{\rm dim}\, X
\le n$. Next, notice that
$$
f_{\epsilon (n)_{k }} \circ \cdots \circ f_{\epsilon (n)_0} (n+1) 
= 1 + f_{\epsilon (n)_{k }} \circ \cdots \circ f_{\epsilon (n)_0}
(n) \quad \forall  k = 0, \ldots , \ell  \, ,
$$
hence in particular $f_{\epsilon (n)_{\ell}} \circ \cdots \circ
  f_{\epsilon (n)_0} (n+1) =1$. Again by monotonicity, $f_{\epsilon
    (n)_{\ell }} \circ \cdots \circ f_{\epsilon (n)_0} (i) \ge 1$ for all
    $i\ge n+1$, and with (\ref{eq3p55x}) $\lim_{k  \to \infty} f_{\epsilon (n)_{k }} \circ \cdots
\circ f_{\epsilon (n)_0} (i) = 1$. This shows
that $C^{\epsilon (n)}(\Phi, X) = \mbox{\rm span}\, \{b_1\} =
\mbox{\rm Fix}\, \Phi$ whenever $\mbox{\rm dim}\, X \ge n+1$, proving (ii).

Finally, to prove (iii) recall from the proof of Lemma \ref{lem31} that
$$
C^{\epsilon
(n),k}(\Phi, X) = \mbox{\rm span}\, \{b_1, \ldots , b_{\widetilde{m}_{k }}, b_{\frac12 {\rm dim}\, X + 1}, \ldots , b_{\frac12 {\rm
  dim}\, X + {\widetilde m}_{k }}\}\, , 
$$
provided that $\widetilde{m}_{k } = f_{\epsilon
(n)_{k }} \circ \cdots \circ f_{\epsilon (n)_0} (\frac12\, \mbox{\rm dim}\,
X)\ge 1$, and $C^{\epsilon (n) , k}(\Phi, X) = \{0 \}$
otherwise. Again, $\lim_{k\to \infty}\widetilde{m}_k$ equals $1$ if
$\frac12 \, \mbox{\rm dim}\, X \ge n+1$, and equals $0$ if
$\frac12 \, \mbox{\rm dim}\, X \le n$. This proves (iii) since $\mbox{\rm span}\, \{b_1, b_{\frac12 {\rm
    dim}\, X + 1}\}= \mbox{\rm Per}\, \Phi$.
\end{proof}

Given an arbitrary linear flow $\Phi$ on $X$, let $X =
\bigoplus_{j=1}^{\ell}Z_j$ be such that $\Phi_{Z_j}$ is irreducible for
every $j=1,\ldots , \ell $. By combining Lemmas \ref{lem35xx} and
\ref{lem35zz}, it is clear that $C^{\epsilon(0)} (\Phi, X) = \mbox{\rm
  Bnd}\, \Phi$, and that $\bigl(
C^{\epsilon (n)} (\Phi, X)\bigr)_{n\in \N_0}$ is a decreasing sequence
of nested spaces, with $C^{\epsilon(n) }(\Phi, X)= \{0\}$ for all $n
\ge \max_{j=1}^{\ell} \mbox{\rm dim}\, Z_j$. Moreover, for every $n\in \N_0$,
\begin{align}\label{eq3p6}
\mbox{\rm dim}\, C^{\epsilon (n)} (\Phi, X) \: = \: \: & \#\Bigl\{ 1\le
j \le \ell  :
\sigma (\Phi_{Z_j}) =\{0\}, \mbox{\rm dim}\, Z_j > n\Bigr\} \\[1mm]
&
+ \, 2 \, \# \Bigl\{ 1\le j \le \ell  :
\sigma (\Phi_{Z_j}) \subset \imath \R \setminus \{0\}, \mbox{\rm
  dim}\, Z_j > 2n\Bigr\} \, . \nonumber
\end{align}
By Lemma \ref{lem35xx}, these numbers are preserved under orbit equivalence. Thus, iterated cores, and
especially their dimensions, provide crucial information regarding the
numbers and sizes of blocks in the real Jordan normal form of
$A^{\Phi}$. However, these cores do not per se distinguish between
different eigenvalues of $A^{\Phi_{\sf C}}$. To distinguish blocks corresponding to
different elements of $\sigma (\Phi)\cap \imath \R$, ideally in a way that is preserved under orbit
equivalence, a finer analysis of $\mbox{\rm Bnd}\, \Phi$ is
needed. 

\section{Bounded linear flows}\label{sec3}

Call a linear flow $\Phi$ on $X$ {\bf bounded} if $\mbox{\rm Bnd}\,
\Phi = X$. (Recall that $X$ is a finite-dimensional normed space over
$\R$.) Clearly, every bounded linear flow is central, i.e.,
$X_{\sf C}^{\Phi} = X$; see also Section \ref{sec4}. Unless explicitly stated
otherwise, every linear flow considered in this section is bounded. 
Note that $\Phi$ is bounded precisely if $\sigma (\Phi) \subset
\imath \R$ and $A^{\Phi}$ is diagonalisable (over $\C$), in which case
Theorem \ref{thmB} takes a particularly simple form.

\begin{theorem}\label{thm4n1}
Two bounded linear flows $\Phi, \Psi$ are $C^0$-orbit equivalent if and only if $A^{\Phi},
\alpha A^{\Psi}$ are similar for some $\alpha \in \R^+$.
\end{theorem}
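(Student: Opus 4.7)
The ``if'' direction is immediate: given $HA^{\Phi}=\alpha A^{\Psi}H$ for a linear isomorphism $H:X\to Y$ and $\alpha\in\R^+$, the relation $H\circ\Phi_t=\Psi_{\alpha t}\circ H$ shows that $\Phi$ is $(H,\tau)$-related to $\Psi$ via $\tau(t,x)=\alpha t$.

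For the ``only if'' direction, assume $\Phi$ is $(h,\tau)$-related to $\Psi$. Since each flow is bounded and diagonalizable over $\C$ with purely imaginary spectrum, in suitable real bases $A^{\Phi}$ splits into a $p\times p$ zero block together with $k$ two-dimensional rotation blocks of frequencies $b_1,\ldots,b_k>0$, and analogously for $A^{\Psi}$ with parameters $p',k',b_1',\ldots,b_{k'}'$. Obtaining the similarity $HA^{\Phi}=\alpha A^{\Psi}H$ reduces to establishing $p=p'$, $k=k'$, and $\{b_1,\ldots,b_k\}=\alpha\{b_1',\ldots,b_k'\}$ as multisets for some common $\alpha\in\R^+$, since the block-by-block construction of $H$ is then a routine linear-algebra exercise. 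Proposition~\ref{prop24} gives $h(\mbox{\rm Fix}\,\Phi)=\mbox{\rm Fix}\,\Psi$; both sides are linear subspaces of finite-dimensional normed spaces, so invariance of domain yields $p=p'$, and together with $\dim X=\dim Y$ also $k=k'$.

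For the frequency-matching step I would invoke Proposition~\ref{prop25}: for every $x\in\mbox{\rm Per}\,\Phi$, $T_{h(x)}^{\Psi}=\tau_x(T_x^{\Phi})$ and $\langle h(x)\rangle^{\Psi}=\langle x\rangle^{\Phi}$. Since $\Phi$ is bounded and diagonalizable, a point $x=\sum_j v_j$ is $\Phi$-periodic precisely when $\{b_j:v_j\ne 0\}$ is pairwise rationally commensurable, in which case $T_x^{\Phi}=2\pi/\gcd\{b_j:v_j\ne 0\}$; heights then encode the refinement lattice of commensurable subsets, and an analogous description holds for $\Psi$. Applying the period and height identities first to $x$ lying in a single rotation block, and then to points with larger support chosen to probe rational relations among the $b_j$, should force $\tau_x(T)/T$ to equal one and the same $\alpha\in\R^+$ on the set of minimal periods, so that the two frequency multisets coincide up to this common scale $\alpha$ under some permutation.

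The main obstacle is precisely this rigidity of $\alpha$: the function $\tau_x$ genuinely depends on $x$, so the identity $T_{h(x)}^{\Psi}=\tau_x(T_x^{\Phi})$ does not by itself deliver a single scalar. Forcing the stretch factors $\tau_x(T_x^{\Phi})/T_x^{\Phi}$ to agree across all blocks requires exploiting the continuity of $\tau$ in $x$, the density of periodic points with prescribed block supports, and the preservation of heights; it is the Kronecker (toral) structure of $\Phi$ on each orbit closure that makes these topological invariants rigid enough to pin down $\alpha$ uniquely.
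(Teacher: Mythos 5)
Your outline correctly identifies the block structure of a bounded generator, the use of invariance of domain to match $\dim\mbox{\rm Fix}$ and hence the number of rotation blocks, and the role of periods and heights in pinning down frequency ratios. All of this is consistent with the paper's approach (Lemmas \ref{lem4n2} and \ref{lem4n3}), and you also correctly identify the crux: showing the period-stretch factor is the \emph{same} $\alpha$ across all rotation blocks.

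But the tool you propose to close that gap --- ``continuity of $\tau$ in $x$'' --- is not available, and this is a genuine error. Proposition \ref{prop22} lets one take $\tau_x$ to be a continuous increasing bijection of $\R$ for each fixed $x$, but nothing at all is assumed about the $x$-dependence; the paper emphasizes in Section \ref{sec4} that $\tau$ need not even be \emph{measurable} in $x$, and singles out exactly this unjustified continuity assumption as the gap in Ladis's proof of the same result. Nor can heights close the gap: $\langle x\rangle^{\Phi}$ is a $\limsup$ over periodic points near $x$, and for a bounded flow the periodic points near a periodic $x$ have frequency support in the same rational class as $x$ (together with $\mbox{\rm Fix}\,\Phi$), so heights only control the scaling \emph{within} a rational class --- precisely the content of Lemma \ref{lem4n2} --- and are blind to relations between rationally independent blocks. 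The paper's Lemma \ref{lem4ny} handles that remaining step differently: it takes a generic point whose orbit closure is a $2$-torus spanning two rational classes, constructs the induced map $f_x = q_{h(x)}^{-1}\circ h\circ p_x:\T\to\T$ from parametrizations $p_x,q_{h(x)}$, shows its homotopy-degree matrix $L\in\Z^{2\times 2}$ is constant on a connected dense set and in fact equals $I_2$, and then applies a rigidity property of Kronecker flows (Proposition \ref{prop4nx}) to force linear dependence of the two frequency vectors. That degree-theoretic argument --- not continuity of $\tau$ --- is what produces a single $\alpha$, and it is the step missing from your sketch.
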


\noindent
The main purpose of this section is to provide a proof of Theorem
\ref{thm4n1}, divided into several steps for the reader's
convenience. Given a non-empty set $\Omega \subset \C$, refer to any element of
$\Omega_{\Q}:= \{ \omega \Q : \omega \in \Omega \}$ as a {\bf rational class}
generated by $\Omega$. Note that for every $\omega, \widetilde{\omega} \in \C$
either $\omega\Q = \widetilde{\omega} \Q$ or $\omega \Q \cap \widetilde{\omega} \Q = \{0\}$. Given $\omega \in\C$ and a bounded
linear flow $\Phi$ on $X$, associate with $\omega \Q$ the
$\Phi$-invariant subspace
$$
X_{\omega \Q}^{\Phi} := \mbox{\rm ker}\, A^{\Phi} \, \oplus \, 
\bigoplus\nolimits_{s\in \R^+ :  \imath s \in \omega \Q}
\mbox{\rm ker} \, \bigl( (A^{\Phi})^2 + s^2 \, \mbox{\rm id}_X \bigr) \:
\supset \: \mbox{\rm Fix}\, \Phi \, .
$$
A few basic properties of such spaces follow immediately from this definition.

\begin{prop}\label{prop4n1a}
Let $\Phi$ be a bounded linear flow on $X$, and $\omega, \widetilde{\omega} \in
\C$. Then:
\begin{enumerate}
\item $X_{\omega\Q}^{\Phi} \cap X_{\widetilde{\omega} \Q}^{\Phi} \ne \mbox{\rm Fix}\,
  \Phi$ if and only if $ \omega \Q = \widetilde{\omega} \Q =\lambda \Q$ for some
  $\lambda \in \sigma (\Phi)\setminus \{0\}$, and hence $X_{\omega \Q}^{\Phi} = \mbox{\rm Fix}\,
  \Phi$ precisely if $\omega  \Q \cap \sigma (\Phi) \subset \{0\}$;
\item For $\lambda ,\widetilde{ \lambda} \in \sigma (\Phi)$, $X_{\lambda \Q}^{\Phi} =
  X_{\widetilde{\lambda} \Q}^{\Phi}$ if and only if $\lambda \Q = \widetilde{\lambda} \Q$;
\item $\sum_{\lambda \in \sigma (\Phi)} X_{\lambda\Q}^{\Phi} = X$;
\item $X_{\{0\}}^{\Phi} = \mbox{\rm Fix}\, \Phi$, and
  $\bigcup_{\lambda \in \sigma(\Phi)} X_{\lambda \Q}^{\Phi} = \mbox{\rm Per}\,
  \Phi$;
\item For every $\lambda \in \sigma (\Phi)\setminus \{0\}$,
$X_{\lambda \Q}^{\Phi} = \mbox{\rm Per}_{T} \Phi$,
with 
$$
T= T_{\lambda \Q}^{\Phi} := \min \bigcap\nolimits_{s\in \R^+
  :\{-\imath s, \imath s\} \cap 
  \lambda \Q\cap \sigma (\Phi)\ne \varnothing}
\frac{2\pi}{s}\N \, ,
$$
and $\{ x \in X_{\lambda \Q}^{\Phi} : T_x^{\Phi} = T_{\lambda \Q}^{\Phi}\}$ is
open and dense in $X_{\lambda \Q}^{\Phi}$.
\end{enumerate}
\end{prop}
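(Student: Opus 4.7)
The plan is to exploit the spectral decomposition of a bounded linear flow. Since $\sigma(\Phi) \subset \imath\R$ and $A^{\Phi}$ is diagonalisable over $\C$, the space $X$ splits as
$$
X = \ker A^{\Phi} \oplus \bigoplus\nolimits_{s \in \R^+:\, \imath s \in \sigma(\Phi)} E_s \, , \qquad E_s := \ker\bigl((A^{\Phi})^2 + s^2 \,\mbox{\rm id}_X\bigr) \, ,
$$
and on each two-dimensional summand $E_s$ the time-$t$-map $\Phi_t$ is a rotation by $st$. In particular, writing $x = x_0 + \sum_s x_s$ with $x_0 \in \ker A^{\Phi}$ and $x_s \in E_s$, one has $\Phi_T x = x$ precisely when $sT \in 2\pi\Z$ for every $s$ with $x_s \neq 0$. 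This, combined with the elementary observation that any two rational classes in $\C$ either coincide or meet only in $\{0\}$, will drive every argument.

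For (i), the intersection $X_{\omega\Q}^{\Phi} \cap X_{\widetilde{\omega} \Q}^{\Phi}$ always contains $\ker A^{\Phi} = \mbox{\rm Fix}\,\Phi$, and from the direct-sum decomposition it strictly exceeds this precisely when some $E_s$ is a common summand, i.e., when $\imath s \in \omega\Q \cap \widetilde{\omega}\Q \cap \sigma(\Phi)$ for some $s > 0$. Since $\imath s \neq 0$, this forces $\omega\Q = \widetilde{\omega}\Q = (\imath s)\Q$, and taking $\lambda := \imath s$ completes the first equivalence; the second equivalence of (i) is then the contrapositive, applied with $\widetilde{\omega} = \omega$. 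Statement (ii) reduces to (i): one direction is by definition, and if $\lambda \in \sigma(\Phi)\setminus\{0\}$ then $X_{\lambda\Q}^{\Phi} \supsetneq \mbox{\rm Fix}\,\Phi$, so equality of $X_{\lambda\Q}^{\Phi}$ and $X_{\widetilde{\lambda}\Q}^{\Phi}$ pushes their intersection beyond $\mbox{\rm Fix}\,\Phi$, forcing $\lambda\Q = \widetilde{\lambda}\Q$ by (i); the case $\lambda = 0$ is immediate. Parts (iii) and (iv) follow directly: (iii) since $E_s \subseteq X_{(\imath s)\Q}^{\Phi}$ whenever $\imath s \in \sigma(\Phi)$; (iv) since $X_{\{0\}}^{\Phi} = \ker A^{\Phi}$ by definition, and conversely any periodic $x$ with minimal period $T_x^{\Phi} > 0$ has all active frequencies in $(2\pi/T_x^{\Phi})\Z$, hence in a common rational class $\lambda\Q$ with $\lambda \in \sigma(\Phi)\setminus\{0\}$.

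For (v), finiteness of $\sigma(\Phi)$ and the pairwise rational commensurability of the nonzero elements of $\lambda\Q \cap \sigma(\Phi)$ make $\bigcap_s \frac{2\pi}{s}\N$ a finite intersection, equal to $T\N$ for a well-defined positive real $T := T_{\lambda\Q}^{\Phi}$. The inclusion $X_{\lambda\Q}^{\Phi} \subseteq \mbox{\rm Per}_T\Phi$ is immediate from the definition of $T$. For the reverse inclusion, which is the main technical step, pick any $s_0 \in \R^+$ with $\imath s_0 \in \lambda\Q \cap \sigma(\Phi)$; since $T = (2\pi/s_0) n_0$ for some $n_0 \in \N$, one has $2\pi/T = s_0/n_0 \in s_0\Q$, hence $\imath(2\pi/T) \in \lambda\Q$. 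Then $\Phi_T x = x$ forces every active frequency $s$ to satisfy $s \in (2\pi/T)\Z \subset s_0 \Q$, i.e., $\imath s \in \lambda\Q$, placing $x$ in $X_{\lambda\Q}^{\Phi}$. For the density claim, the set of $x \in X_{\lambda\Q}^{\Phi}$ with every component $x_s$ nonzero ($s > 0$, $\imath s \in \lambda\Q \cap \sigma(\Phi)$) is the complement of a finite union of proper subspaces, hence open and dense in $X_{\lambda\Q}^{\Phi}$; on that set every active frequency contributes, so $T_x^{\Phi}$ attains the full least common multiple $T_{\lambda\Q}^{\Phi}$. Openness of $\{x \in X_{\lambda\Q}^{\Phi} : T_x^{\Phi} = T\}$ then follows from the lower semi-continuity of $T_{\,\cdot\,}^{\Phi}$ together with the fact that minimal periods in $X_{\lambda\Q}^{\Phi}$ form a discrete set of proper divisors of $T$ plus $T$ itself. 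The main obstacle is the number-theoretic step in (v): one must verify carefully that the prescribed minimum defining $T$ exists, is a genuine least common multiple, and that $sT \in 2\pi\Z$ then indeed forces $\imath s \in \lambda\Q$.
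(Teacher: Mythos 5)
The paper states Proposition \ref{prop4n1a} without proof, regarding it as an immediate consequence of the definition of $X_{\omega\Q}^{\Phi}$ and the spectral structure of a bounded generator; your write-up supplies exactly the spectral-decomposition argument the authors have in mind, and it is correct. One small imprecision: you describe each $E_s = \ker\bigl((A^{\Phi})^2 + s^2\,\mbox{\rm id}_X\bigr)$ as a ``two-dimensional summand,'' but for a semisimple eigenvalue $\imath s$ of multiplicity $m$ the space $E_s$ has real dimension $2m$, and $\Phi_t|_{E_s}$ is a block-diagonal collection of $m$ planar rotations by $st$ rather than a single rotation. This does not affect any step you take, since the only property used is that $\Phi_T x = x$ for $0 \ne x \in E_s$ iff $sT \in 2\pi\Z$, which still holds; but the wording should be adjusted. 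Everything else --- the rational-class dichotomy driving (i) and (ii), the reduction of (iii)--(iv) to which $E_s$ appear, the identification $T_{\lambda\Q}^{\Phi} = \mathrm{lcm}$ of the $2\pi/s_j$, the argument that $\mbox{\rm Per}_T\Phi \subseteq X_{\lambda\Q}^{\Phi}$ via $2\pi/T \in s_0\Q$, and the density/openness via the complement of finitely many proper subspaces together with lower semi-continuity and finiteness of the range of $T_\cdot^{\Phi}$ on $X_{\lambda\Q}^{\Phi}$ --- is sound.
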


\noindent
Recall from Section \ref{secorb} that if $\varphi$ is
$(h,\tau)$-related to $\psi$ then $h(\mbox{\rm Per}\, \varphi) =
\mbox{\rm Per}\, \psi$, and yet $h(\mbox{\rm Per}_T \varphi)$ may not be
contained in $\mbox{\rm Per}_S \psi$ for any $S\in \R^+$. Taken together, the
following two lemmas show that such a situation cannot occur for linear flows.

\begin{lem}\label{lem4n2}
Let $\Phi, \Psi$ be bounded linear flows on $X,Y$, respectively, and
assume that $\mbox{\rm Per}\, \Phi = X$. If $\Phi$ is
$(h,\tau)$-related to $\Psi$ then there exists an $\alpha \in \R^+$ with the
following properties:
\begin{enumerate}
\item $T_{h(x)}^{\Psi} = \alpha T_x^{\Phi}$ for every $x\in X$;
\item $h(\mbox{\rm Per}_T \Phi) = \mbox{\rm Per}_{\alpha T} \Psi$ for
  every $T\in \R^+$;
\item $A^{\Phi}, \alpha A^{\Psi}$ are similar.
\end{enumerate}
\end{lem}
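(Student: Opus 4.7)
The plan is to extract $\alpha$ by applying Proposition \ref{prop25}(i) on a carefully chosen open dense subset of $X$, promote the resulting identity to all of $X$ by continuity of $\tau$, and then use the cocycle structure of $\tau$ to pass from a common period down to the individual minimal periods $T_x^\Phi$. Items (ii) and (iii) then follow from (i) by elementary bookkeeping and a short dimension-counting argument.

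Since $\mbox{\rm Per}\,\Phi = X$ is the union of the subspaces $X^\Phi_{\lambda\Q}$ (Proposition \ref{prop4n1a}(iv)), one of them must coincide with $X$; denote the corresponding common period by $T^\Phi$, and note (Proposition \ref{prop4n1a}(v)) that $X^* := \{x : T_x^\Phi = T^\Phi\}$ is open and dense in $X$. Proposition \ref{prop24} yields $\mbox{\rm Per}\,\Psi = h(X) = Y$, so an analogous common period $T^\Psi$ and open dense set $Y^* \subset Y$ exist. The set $U := X^* \cap h^{-1}(Y^*)$ is open and dense in $X$, and for every $x\in U$ Proposition \ref{prop25}(i) gives $T^\Psi = T_{h(x)}^\Psi = \tau_x(T_x^\Phi) = \tau_x(T^\Phi)$. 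Setting $\alpha := T^\Psi/T^\Phi$, the identity $\tau_x(T^\Phi) = \alpha T^\Phi$ thus holds on $U$, and extends to all of $X$ by joint continuity of $\tau$ --- understood here to be part of what it means for $\Phi$ to be $(h,\tau)$-related to $\Psi$ (cf.\ Proposition \ref{prop22}).

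Applying $h$ to $\Phi_{t+s}(x) = \Phi_t(\Phi_s x)$ and invoking the flow axiom for $\Psi$ delivers the cocycle $\tau_x(t+s) = \tau_{\Phi_s x}(t) + \tau_x(s)$. For $x \notin \mbox{\rm Fix}\,\Phi$, setting $s = T_x^\Phi$ (so that $\Phi_s x = x$) and iterating gives $\tau_x(k T_x^\Phi) = k\, \tau_x(T_x^\Phi)$ for every $k \in \N$; choosing $k := T^\Phi/T_x^\Phi \in \N$ and combining with Proposition \ref{prop25}(i) produces $\alpha T^\Phi = \tau_x(T^\Phi) = (T^\Phi/T_x^\Phi)\, T_{h(x)}^\Psi$, i.e., $T_{h(x)}^\Psi = \alpha T_x^\Phi$. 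The case $x \in \mbox{\rm Fix}\,\Phi$ is covered by $h(\mbox{\rm Fix}\,\Phi) = \mbox{\rm Fix}\,\Psi$ (Proposition \ref{prop24}), completing (i). Assertion (ii) then follows from the chain $x \in \mbox{\rm Per}_T \Phi \Leftrightarrow T_x^\Phi \mid T \Leftrightarrow \alpha T_x^\Phi \mid \alpha T \Leftrightarrow h(x) \in \mbox{\rm Per}_{\alpha T}\Psi$.

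For (iii), let $\pm\imath\beta_1, \ldots, \pm\imath\beta_r$ be the distinct non-zero eigenvalues of $A^\Phi$ (boundedness forcing $\beta_j \in \R^+$) with complex multiplicities $m_j$; write $n_j := \beta_j T^\Phi/(2\pi) \in \N$ and define $n'_l$, $m'_l$ analogously for $A^\Psi$. Since $A^\Phi$ is diagonalisable, $\dim\mbox{\rm Per}_{T^\Phi/k}\Phi = \dim\mbox{\rm Fix}\,\Phi + 2\sum_{k \mid n_j} m_j$ for every $k \in \N$, and similarly for $\Psi$. By (ii), Proposition \ref{prop24}, and invariance of dimension under homeomorphisms, both $\dim\mbox{\rm Fix}\,\Phi = \dim\mbox{\rm Fix}\,\Psi$ and $\sum_{k \mid n_j} m_j = \sum_{k \mid n'_l} m'_l$ hold for every $k$. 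M\"obius inversion on the divisibility order (all sums being finite since the $n_j, n'_l$ are bounded) then forces the multisets $\{(n_j, m_j)\}$ and $\{(n'_l, m'_l)\}$ to coincide, whence $A^\Phi$ and $\alpha A^\Psi$ --- whose spectra are $\{\pm 2\pi\imath n_j/T^\Phi\}$ and $\{\pm 2\pi\imath n'_l/T^\Phi\}$ respectively with matching multiplicities, and both diagonalisable --- are similar. The principal obstacle is the continuity step extending $\tau_x(T^\Phi) = \alpha T^\Phi$ from $U$ to all of $X$; once this is legitimised by the conventions of Section \ref{secorb}, everything downstream is routine.
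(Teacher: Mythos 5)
The step ``$\tau_x(T^\Phi)=\alpha T^\Phi$ holds on $U$, and extends to all of $X$ by joint continuity of $\tau$'' is where your argument fails, and the gap is not repairable along these lines. Proposition~\ref{prop22} only guarantees that for each \emph{fixed} $x$ the map $\tau_x:\R\to\R$ is a continuous increasing bijection; it says nothing whatsoever about continuity in $x$, let alone joint continuity on $\R\times X$. Indeed Section~\ref{sec4} of the paper (in the discussion of \cite{Ladis}) explicitly points out that even for $C^0$-orbit equivalent flows on $\R^2$ the function $\tau$ can fail to be measurable, and that assuming continuity of $\tau$ is precisely the flaw in \cite{Ladis} that the present proof is designed to avoid. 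Your subsequent cocycle manipulation merely rewrites $\tau_x(T^\Phi)=(T^\Phi/T_x^\Phi)\,T_{h(x)}^\Psi$, so that the desired conclusion $T_{h(x)}^\Psi=\alpha T_x^\Phi$ is \emph{equivalent} to the unproved identity $\tau_x(T^\Phi)=\alpha T^\Phi$ for general $x$; you have gone in a circle. What the paper does instead is invoke the $\varphi$-height $\langle\,\cdot\,\rangle^\varphi$ from Section~\ref{secorb}, an orbit-equivalence invariant built specifically to cope with the possible ``scrambling'' of minimal periods: by Proposition~\ref{prop4n1a}(v) one has $\langle x\rangle^\Phi = T_{\lambda\Q}^\Phi/T_x^\Phi$ and $\langle h(x)\rangle^\Psi = T_{\mu\Q}^\Psi/T_{h(x)}^\Psi$, and Proposition~\ref{prop25}(ii) equates the two, giving (i) without any appeal to the $x$-dependence of $\tau$.

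Modulo this gap, the rest of your plan is sound, and your proof of (iii) is actually a pleasant alternative to the paper's: comparing $k\mapsto\dim\mbox{\rm Per}_{T^\Phi/k}\Phi$ and $k\mapsto\dim\mbox{\rm Per}_{T^\Psi/k}\Psi$ for all $k\in\N$ and recovering the eigenvalue data by M\"obius inversion over the divisibility order is cleaner than the paper's induction over the ordered magnitudes $a_1>\cdots>a_m$ with the auxiliary index sets $K_\ell$. But that step is downstream of (i) and (ii), which you have not established.
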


\begin{proof}
By Proposition \ref{prop4n1a}(iv), $\mbox{\rm Per}\, \Phi = X$ if and
only if $X_{\lambda \Q}^{\Phi} = X$ for some $\lambda \in \sigma (\Phi)$, and since $\mbox{\rm Per}\, \Psi = h (\mbox{\rm
  Per}\, \Phi)= h(X) = Y$, also $Y_{\mu \Q}^{\Psi} = Y$ for some
$\mu \in \sigma (\Psi)$. Clearly, if $\lambda = 0$ then
$\mu = 0$, in which case every $\alpha \in \R^+$ has all the desired
properties. Henceforth assume that $\lambda \ne 0$, or equivalently that
$\mbox{\rm Fix}\, \Phi \ne X$, and hence $\mu \ne 0$ as well.

To prove (i), pick any $x\in X \setminus \mbox{\rm Fix}\, \Phi$. By
Proposition \ref{prop4n1a}(v), there exists a sequence $(x_n)$ with
$x_n \to x$ and $T_{x_n}^{\Phi} = T_{\lambda \Q}^{\Phi}$ for all $n$, so $\langle
x \rangle^{\Phi} = T_{\lambda \Q}^{\Phi}/ T_x^{\Phi}$, and similarly
$\langle h(x) \rangle^{\Psi} = T_{\mu \Q}^{\Psi} /
T_{h(x)}^{\Psi}$. By Proposition \ref{prop25}(ii), therefore,
$T_{h(x)}^{\Psi} / T_x^{\Phi} = T_{\mu \Q}^{\Psi}/ T_{\lambda
  \Q}^{\Phi}$, that is, (i) holds with $\alpha = T_{\mu \Q}^{\Psi}/
T_{\lambda  \Q}^{\Phi}$.

To prove (ii), pick any $T\in \R^+$ and $x\in \mbox{\rm Per}_T \Phi
\setminus \mbox{\rm Fix}\, \Phi$. Then $T/T_x^{\Phi} = m$ for some
$m\in \N$, and (i) yields $\alpha T /T_{h(x)}^{\Psi} =m$, that is,
$h(x)\in \mbox{\rm Per}_{\alpha T}\Psi$. Thus $h(\mbox{\rm Per}_T
\Phi)\subset \mbox{\rm Per}_{\alpha T}\Psi$, and reversing the roles
of $\Phi$ and $\Psi$ yields $h(\mbox{\rm Per}_T
\Phi) = \mbox{\rm Per}_{\alpha T}\Psi$.

To prove (iii), denote $A^{\Phi}, A^{\Psi}$ simply by $A,B$,
respectively, and let $\sigma (\Phi)\setminus \{ 0 \} = \{\pm \imath a_1, \ldots , \pm
\imath a_m\}$ and $\sigma (\Psi) \setminus \{0 \}= \{\pm \imath b_1, \ldots , \pm
\imath b_n\}$ with appropriate $m,n\in \N$ and real numbers
$a_1 > \ldots > a_m>0$ and $b_1 > \ldots > b_n>0$; for convenience,
$a_0 := b_0 := 0$. Also, let $X_0 = \mbox{\rm ker}\, A$, $Y_0 =
\mbox{\rm ker}\, B$, as well as $X_s = \mbox{\rm ker}\, (A^2 + s^2 \,
\mbox{\rm id}_X)$, $Y_s = \mbox{\rm ker}\, (B^2 + s^2 \, \mbox{\rm id}_Y)$ for every
$s\in \R^+$. Since $A,B$ are diagonalisable (over $\C$), to establish
(iii) it suffices to show that in fact $m=n$, and that moreover
\begin{equation}\label{eq4l1p0}
a_k = \alpha b_k \quad \mbox{\rm and} \quad \mbox{\rm dim}\, X_{a_k} =
\mbox{\rm dim}\, Y_{b_k} \quad \forall k = 0, 1, \ldots , m \, .
\end{equation}
To this end, notice first that $\mbox{\rm Per}_{2\pi / s} \Phi =
\bigoplus_{k\in \N_0} X_{ks}$, and similarly
$\mbox{\rm Per}_{2\pi/s} \Psi = \bigoplus_{k\in \N_0} Y_{k s}$. 
For the purpose of induction, assume that, for some integer $0\le \ell < \min \{m,n\}$,
\begin{equation}\label{eq4l1p1}
a_k = \alpha b_k \quad \mbox{\rm and} \quad \mbox{\rm dim}\, X_{a_k} =
\mbox{\rm dim}\, Y_{b_k} \quad \forall k = 0, 1, \ldots , \ell \, .
\end{equation}
Now, recall that $h(X_0) = h(\mbox{\rm Fix}\, \Phi) = \mbox{\rm Fix}\, \Psi = Y_0 $, and hence $\mbox{\rm
  dim}\, X_0 = \mbox{\rm dim}\, Y_0$ by the topological invariance of
dimension \cite[ch.2]{hatch}. In other words, (\ref{eq4l1p1}) holds for $\ell
=0$. Next, let $K_{\ell} = \bigl\{k\in \N_0 : k
a_{\ell +1} \in \{a_0, a_1 , \ldots a_{\ell} \}\bigr\}$, and note that $K_{\ell}\subset \N_0$
is finite with $0\in K_{\ell}$ and $1\not \in K_{\ell}$. Moreover, since
$a_{\ell +1}>0$,
$$
\mbox{\rm Per}_{2\pi/a_{\ell +1}} \Phi =\bigoplus\nolimits_{k \in
  \N_0} \!\! \! \! X_{k a_{\ell +1}} = \bigoplus\nolimits_{k\in\N \setminus
  K_{\ell}}\!\! \! \! X_{k a_{\ell +1}}
\oplus \bigoplus\nolimits_{k\in K_{\ell}} \!\!\! \! X_{k a_{\ell +1}} =
 X_{a_{\ell +1}} \oplus \bigoplus\nolimits_{k \in K_{\ell}} \!\!\! \!
 X_{k a_{\ell +1}} \, ,
$$
whereas by (ii),
$$
h(\mbox{\rm Per}_{2\pi/a_{\ell +1}} \Phi)  = \mbox{\rm
  Per}_{2\pi\alpha/a_{\ell +1}} \Psi  =  \bigoplus\nolimits_{k \in \N_0}\!\!\!\!
Y_{k a_{\ell +1}/\alpha}  = \bigoplus\nolimits_{k \in \N \setminus K_{\ell}} \!\!\!\! Y_{k
  a_{\ell +1}/\alpha} \oplus \bigoplus\nolimits_{k \in  K_{\ell}} \!\!\! \!
Y_{k   a_{\ell +1}/\alpha} \, . 
$$
By assumption (\ref{eq4l1p1}), $\mbox{\rm dim}\, X_{k a_{\ell +1}} \! =
\mbox{\rm dim}\, Y_{k a_{\ell +1}/\alpha}$ for every $k\in K_{\ell}$. 
Since $\mbox{\rm dim}\, \mbox{\rm Per}_{2\pi/a_{\ell +1}} \! \Phi = \mbox{\rm
  dim}\, \mbox{\rm Per}_{2\pi\alpha /a_{\ell +1}} \! \Psi$, again by the
topological invariance of dimension, clearly
$\mbox{\rm dim}\, X_{a_{\ell +1}} = \sum_{k \in \N \setminus K_{\ell}}
\mbox{\rm dim}\, Y_{k a_{\ell +1}/\alpha}>0$. This shows that $\imath
k a_{\ell +1}/\alpha \in \sigma (\Psi)$ for some $k \in \N \setminus K_{\ell}$,
and also $\mbox{\rm dim}\, X_{a_{\ell +1}} \ge \mbox{\rm dim}\,
Y_{a_{\ell +1}/\alpha}$ because $1\in \N \setminus K_{\ell}$. Note
that $k a_{\ell +1}/\alpha \not
\in \{b_0, b_1, \ldots , b_{\ell} \}$ whenever $k\in \N\setminus K_{\ell}$. Thus
$k a_{\ell +1}/\alpha \le b_{\ell +1}$, and in particular $a_{\ell +1}\le \alpha
b_{\ell +1}$. The same argument with the roles of $\Phi $ and $\Psi$
reversed yields $a_{\ell +1}\ge \alpha b_{\ell +1}$ and $\mbox{\rm dim}\,
X_{b_{\ell +1}/\alpha} \le \mbox{\rm dim}\, Y_{b_{\ell +1}}$. Consequently,
(\ref{eq4l1p1}) holds with $\ell +1$ instead of $\ell $, and in
fact for all $\ell \le \min \{m,n\}$ by induction. Since $\Phi, \Psi$ are bounded, 
$X = \bigoplus_{\ell =0}^m X_{a_{\ell}}$, $Y = \bigoplus_{\ell =0}^n
Y_{b_{\ell}}$, from which it is clear that $m=n$, showing in turn that (\ref{eq4l1p0})
holds. As observed earlier, this proves that $A^{\Phi} =A$ and $\alpha A^{\Psi}= \alpha B $ are similar.
\end{proof}

As seen in the above proof, the assumption $\mbox{\rm Per}\, \Phi = X$ in Lemma
\ref{lem4n2} simply means that $X_{\lambda \Q}^{\Phi} = X$ for some
$\lambda \in \sigma (\Phi)$. Thus $\sigma (\Phi)$ generates
at most one rational class other than $\{0\}$. Even when $\sigma
(\Phi)$ does generate several rational classes, however, it turns out that if
$\Phi$ is $(h,\tau)$-related to $\Psi$ then $h(X_{\lambda \Q}^{\Phi})$
always equals $Y_{\mu \Q}^{\Psi}$ with an appropriate $\mu$. This way the homeomorphism $h$ induces a
bijection between the rational classes generated by $\sigma (\Phi)$ and $\sigma (\Psi)$.

\begin{lem}\label{lem4n3}
Let $\Phi, \Psi$ be bounded linear flows on $X,Y$, respectively. If
$\Phi$ is $(h,\tau)$-related to $\Psi$ then there exists a (unique)
bijection $h_{\Q} : \sigma (\Phi)_{\Q} \to \sigma (\Psi)_{\Q}$ with
$h(X_{\lambda \Q}^{\Phi}) = Y_{h_{\Q} (\lambda \Q)}^{\Psi}$ for every
$\lambda \in \sigma (\Phi)$; in particular, $\sigma (\Phi)$ and
$\sigma (\Psi)$ generate the same number of rational classes.
\end{lem}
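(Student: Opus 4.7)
The idea is to construct $h_{\Q}$ pointwise on rational classes. Since Proposition~\ref{prop4n1a}(iv) gives $\mbox{\rm Fix}\,\Phi = X_{\{0\}}^{\Phi}$ and Proposition~\ref{prop24} yields $h(\mbox{\rm Fix}\,\Phi) = \mbox{\rm Fix}\,\Psi = Y_{\{0\}}^{\Psi}$, I would set $h_{\Q}(\{0\}) = \{0\}$ and, for each $\lambda\in\sigma(\Phi)\setminus\{0\}$, define $h_{\Q}(\lambda\Q)$ as the unique rational class $\mu\Q$ for which $h(x)\in Y_{\mu\Q}^{\Psi}$ whenever $x\in X_{\lambda\Q}^{\Phi}\setminus\mbox{\rm Fix}\,\Phi$. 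All content lies in showing this assignment is well-defined and that $h(X_{\lambda\Q}^{\Phi}) = Y_{h_{\Q}(\lambda\Q)}^{\Psi}$.

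For well-definedness, Propositions~\ref{prop24} and \ref{prop4n1a}(i),(iv) decompose $\mbox{\rm Per}\,\Psi\setminus\mbox{\rm Fix}\,\Psi$ as the \emph{finite} disjoint union $\bigsqcup_{\mu\in\sigma(\Psi)\setminus\{0\}} (Y_{\mu\Q}^{\Psi}\setminus\mbox{\rm Fix}\,\Psi)$, and since each $Y_{\mu\Q}^{\Psi}$ is a closed subspace the class-assignment $y\mapsto \mu\Q$ is locally constant on $\mbox{\rm Per}\,\Psi\setminus\mbox{\rm Fix}\,\Psi$. Composing with the continuous map $h$ (restricted to $X_{\lambda\Q}^{\Phi}\setminus\mbox{\rm Fix}\,\Phi$, which lands in $\mbox{\rm Per}\,\Psi\setminus\mbox{\rm Fix}\,\Psi$ because $h(x)\in\mbox{\rm Fix}\,\Psi$ forces $x\in\mbox{\rm Fix}\,\Phi$ by Proposition~\ref{prop24}) yields a locally constant function to a finite set. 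If $X_{\lambda\Q}^{\Phi}\setminus\mbox{\rm Fix}\,\Phi$ is connected, this function is constant and $h_{\Q}(\lambda\Q)$ is unambiguously defined.

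The crux is this connectivity. For $\lambda\in\sigma(\Phi)\setminus\{0\}$, $\mbox{\rm Fix}\,\Phi = \mbox{\rm ker}\, A^{\Phi}$ is a proper subspace of $X_{\lambda\Q}^{\Phi}$, and $X_{\lambda\Q}^{\Phi}$ contains at least one summand $\mbox{\rm ker}\bigl((A^{\Phi})^2 + s^2\,\mbox{\rm id}_X\bigr)$ with $s\in\R^+$ and $\imath s\in\lambda\Q\cap\sigma(\Phi)$; because $A^{\Phi}$ is real, the complex conjugate pair $\pm\imath s$ contributes a real invariant plane, so this summand has real dimension at least $2$. Hence $\mbox{\rm Fix}\,\Phi$ has codimension at least $2$ in the real vector space $X_{\lambda\Q}^{\Phi}$, and removing a subspace of codimension $\geq 2$ from any real finite-dimensional vector space leaves a path-connected set. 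I expect this connectivity observation to be the only genuinely non-trivial step in the argument.

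Finally, $h(X_{\lambda\Q}^{\Phi})\subset Y_{h_{\Q}(\lambda\Q)}^{\Psi}$ is immediate from the definition (together with $h(\mbox{\rm Fix}\,\Phi)=\mbox{\rm Fix}\,\Psi$). The reverse inclusion and the bijectivity of $h_{\Q}$ both follow by running the whole construction with the roles of $(\Phi,X,h)$ and $(\Psi,Y,h^{-1})$ swapped, producing a map $h'_{\Q}:\sigma(\Psi)_{\Q}\to\sigma(\Phi)_{\Q}$; one must have $h'_{\Q}\circ h_{\Q}=\mbox{\rm id}$, for otherwise distinct classes $\lambda_1\Q\neq\lambda_2\Q$ in $\sigma(\Phi)_{\Q}$ would satisfy $h(X_{\lambda_1\Q}^{\Phi})\cup h(X_{\lambda_2\Q}^{\Phi})\subset Y_{\mu\Q}^{\Psi}$, putting non-fixed points of distinct rational classes in the single set $h^{-1}(Y_{\mu\Q}^{\Psi})\subset X_{h'_{\Q}(\mu\Q)}^{\Phi}$, contradicting the disjointness in Proposition~\ref{prop4n1a}(i). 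Uniqueness of $h_{\Q}$ and the equinumerosity of rational classes then follow at once.
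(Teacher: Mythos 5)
Your proof is correct and follows essentially the same route as the paper: both hinge on the connectivity of $X_{\lambda\Q}^{\Phi}\setminus\mbox{\rm Fix}\,\Phi$ (via the codimension-$\ge 2$ fact encapsulated in Proposition~\ref{prop4npx}) together with the finite decomposition of $\mbox{\rm Per}\,\Psi$ by rational classes, to force $h(X_{\lambda\Q}^{\Phi})$ into a single $Y_{\mu\Q}^{\Psi}$. Your locally-constant repackaging of the paper's closed-cover contradiction argument, and the role-reversal step for equality and bijectivity, match the paper's proof in substance.
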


\noindent
The proof of Lemma \ref{lem4n3} is facilitated by a simple topological
observation \cite{Wsupp}.

\begin{prop}\label{prop4npx}
Let $Z_1, \ldots , Z_{\ell}$ be subspaces of $X$, with $\ell \in \N$. If $\mbox{\rm
  dim}\, X/Z_{j} \ge 2$ for every $j =1,\ldots, \ell $ then $X \setminus \bigcup_{j=1}^{\ell} Z_j$ is connected.
\end{prop}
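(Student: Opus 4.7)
The plan is to establish the stronger property that $X \setminus \bigcup_{j=1}^{\ell} Z_j$ is path-connected. Take two points $p, q$ in this set; assume $p \neq q$ and write $v := q - p$. The strategy is to produce a two-dimensional affine plane $P$ containing $p$ and $q$ such that each intersection $P \cap Z_j$ is empty or a single point. Given such a $P$, the set $P \cap \bigcup_j Z_j$ has at most $\ell$ elements, and since $P$ is affinely isomorphic to $\R^2$, its complement in $P$ is path-connected; any path from $p$ to $q$ inside it also lies in $X \setminus \bigcup_j Z_j$.

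To construct $P = p + \R v + \R w$ for a suitable $w$, I would analyze each intersection $P \cap Z_j$ by passing to the quotient $\pi_j : X \to X/Z_j$. When $v \in Z_j$, one has $\pi_j(P) = [p] + \R [w]$, and a short calculation shows $P \cap Z_j = \varnothing$ provided $w \notin Z_j + \R p$. When $v \notin Z_j$, the images $[v], [w]$ are linearly independent in $X/Z_j$ precisely when $w \notin Z_j + \R v$, and in that case $\pi_j|_P$ is an affine embedding into $X/Z_j$, whence $|P \cap Z_j| \le 1$. Hence any $w$ lying outside the union
$$
\R v \, \cup \, \bigcup\nolimits_{j=1}^{\ell}(Z_j + \R p) \, \cup \, \bigcup\nolimits_{j :\, v \notin Z_j}(Z_j + \R v)
$$
produces a plane $P$ with the desired property; inclusion of $\R v$ in the union guarantees that $v, w$ span a genuine $2$-plane.

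The heart of the argument, and the only place where the hypothesis $\dim X/Z_j \ge 2$ is actually used, is the observation that every summand in the union above is a \emph{proper} subspace of $X$: since $Z_j$ has codimension at least $2$, both $Z_j + \R p$ and $Z_j + \R v$ have codimension at least $1$ in $X$ (were $Z_j$ merely a hyperplane, one would have $Z_j + \R p = X$ and no $w$ could avoid it). Since a finite union of proper subspaces of a vector space over an infinite field cannot exhaust that space, a suitable $w$ exists. The main obstacle is therefore the bookkeeping in the case analysis $v \in Z_j$ versus $v \notin Z_j$, pinning down exactly which linear conditions on $w$ force $|P \cap Z_j| \le 1$; once these are in hand, existence of $w$ is automatic, and the classical fact that $\R^2$ minus finitely many points is path-connected finishes the proof.
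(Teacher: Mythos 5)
Your proof is correct, and it in fact establishes path-connectedness, which is slightly more than the connectedness asserted. There is no proof in the paper to compare against: Proposition \ref{prop4npx} is stated as an elementary observation with details deferred to the supplementary document \cite{Wsupp}. Judged on its own merits, your reduction to a generic $2$-plane is sound. For $v\in Z_j$ the condition $w\notin Z_j+\R p$ indeed gives $P\cap Z_j=\varnothing$: a point $p+sv+tw\in Z_j$ forces $p\in Z_j$ if $t=0$ and $w\in Z_j+\R p$ if $t\ne 0$. For $v\notin Z_j$ the condition $w\notin Z_j+\R v$ makes the classes of $v,w$ in $X/Z_j$ linearly independent, so $P\cap Z_j$ contains at most one point. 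The hypothesis $\mbox{\rm dim}\, X/Z_j\ge 2$ enters exactly where you say it does, guaranteeing that $Z_j+\R p$ and $Z_j+\R v$ are proper subspaces (and $\R v$ is proper since $\mbox{\rm dim}\, X\ge 2$); since $\R$ is infinite, a finite union of proper subspaces cannot exhaust $X$, so the required $w$ exists, $v$ and $w$ are independent, and $P$ minus at most $\ell$ points is path-connected, joining $p$ to $q$ inside $X\setminus\bigcup_{j=1}^{\ell}Z_j$.
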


\begin{rem}
Proposition \ref{prop4npx} remains valid when $\mbox{\rm dim}\, X = \infty$, provided that each $Z_j$
is closed. It also holds when $X$ is a normed space over $\C$, in which case it suffices to require that
$Z_j \ne X$ for every $j$.
\end{rem}

\begin{proof}[Proof of Lemma \ref{lem4n3}]
Assume that $\sigma (\Phi)$ and $\sigma (\Psi)$ both generate at least two
different rational classes other than $\{0\}$. (Otherwise, the lemma
trivially is correct.) Fix any $\lambda \in \sigma (\Phi) \setminus
\{0\}$. Given $x\in X_{\lambda \Q}^{\Phi}\subset \mbox{\rm Per}\,
\Phi$, Propositions \ref{prop24} and \ref{prop4n1a}(iv)
guarantee that $h(x)\in Y_{\mu \Q}^{\Psi}$ for an appropriate,
possibly $x$-dependent $\mu \in \sigma (\Psi)$. Thus the family of
closed, connected sets $\bigl\{ h^{-1} (Y_{\mu \Q} ^{\Psi}) : \mu \in
\sigma (\Psi)\setminus \{0\} \bigr\}$ constitutes a finite cover of
$X_{\lambda \Q}^{\Phi}\setminus \mbox{\rm Fix}\, \Phi$; by Proposition
\ref{prop4npx}, the latter set is connected. If $X_{\lambda
  \Q}^{\Phi}\setminus \mbox{\rm Fix}\, \Phi$ was not entirely
contained in $h^{-1} (Y_{\mu \Q}^{\Psi})$ for some $\mu$, then one
could choose $\mu_1, \mu_2\in \sigma (\Psi)\setminus \{0\}$ with $\mu_1 \Q \ne \mu_2
\Q$ such that
\begin{align*}
\varnothing \ne h^{-1} (Y_{\mu_1 \Q}^{\Psi}) \cap  h^{-1} (Y_{\mu_2
  \Q}^{\Psi}) \cap  ( X_{\lambda \Q}^{\Phi} \setminus \mbox{\rm Fix}\,
\Phi )
& = h^{-1} (Y_{\mu_1 \Q}^{\Psi} \cap   Y_{\mu_2  \Q}^{\Psi} ) \cap 
X_{\lambda \Q}^{\Phi} \setminus \mbox{\rm Fix}\, \Phi \\
& \subset h^{-1} (\mbox{\rm Fix}\, \Psi) \setminus \mbox{\rm Fix}\, \Phi  = \varnothing \, ,
\end{align*}
an obvious contradiction. Hence indeed $h(X_{\lambda \Q}^{\Phi})\subset
Y_{\mu \Q}^{\Psi}$ for some $\mu \in \sigma (\Psi)$, and reversing the
roles of $\Phi $ and $\Psi$ yields $h(X_{\lambda \Q}^{\Phi})= Y_{\mu
  \Q}^{\Psi}$. Note that the rational class $\mu \Q$ is uniquely
determined by $\lambda \Q$, due to Proposition
\ref{prop4n1a}(ii). Letting $h_{\Q} (\lambda \Q) = \mu \Q$ precisely
when $h(X_{\lambda \Q}^{\Phi})= Y_{\mu
  \Q}^{\Psi}$ therefore (uniquely) defines a map $h_{\Q} : \sigma(\Phi)_{\Q} \to
\sigma (\Psi)_{\Q}$. Since $h$ is one-to-one, so is $h_{\Q}$, and
hence $\# \sigma (\Phi)_{\Q} \le \# \sigma (\Psi)_{\Q}$. Again, reversing
the roles of $\Phi $ and $\Psi$ yields $\# \sigma (\Phi)_{\Q} = \#
\sigma (\Psi)_{\Q}$, and $h_{\Q}$ is a bijection.
\end{proof}

Combining Lemmas \ref{lem4n2} and \ref{lem4n3}, notice that
if $\lambda \in \sigma (\Phi)\setminus \{0\}$ and $\Phi, \Psi$ are
$C^0$-orbit equivalent, then the respective (linear) flows induced on
$X_{\lambda \Q}^{\Phi}$ and $Y_{h_{\Q} (\lambda \Q)}^{\Psi}$ are
linearly flow equivalent with $\tau_x =
\alpha_{{\lambda \Q}} \mbox{\rm id}_{\R}$ for every $x\in X_{\lambda\Q}^{\Phi}$, where
$\alpha_{ {\lambda \Q}} = T_{h_{\Q} (\lambda \Q)}^{\Psi}/
T_{\lambda\Q}^{\Phi}$. As it turns out, Theorem \ref{thm4n1} is but
a direct consequence of the fact that $\alpha_{{\lambda
  \Q}}$ does not actually depend on $\lambda \Q$.

\begin{lem}\label{lem4ny}
Let $\Phi, \Psi$ be bounded linear flows. If
$\Phi$ is $(h,\tau)$-related to $\Psi$ then 
$$
\frac{T_{h_{\Q} (\lambda \Q)}^{\Psi}}{T_{\lambda \Q}^{\Phi}} =
\frac{T_{h_{\Q}( \widetilde{\lambda} \Q)}^{\Psi}}{T_{\widetilde{\lambda} \Q}^{\Phi}} \quad
\forall \lambda , \widetilde{\lambda} \in \sigma (\Phi) \setminus \{0\} \, ;
$$
here $h_{\Q}$ denotes the bijection of Lemma \ref{lem4n3}.
\end{lem}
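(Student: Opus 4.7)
Proof plan. Write $\alpha := T_{h_\Q(\lambda\Q)}^\Psi / T_{\lambda\Q}^\Phi$ and $\widetilde\alpha := T_{h_\Q(\widetilde\lambda\Q)}^\Psi / T_{\widetilde\lambda\Q}^\Phi$; the aim is to show $\alpha = \widetilde\alpha$. Using Proposition \ref{prop4n1a}(v), pick $x \in X_{\lambda\Q}^\Phi$ with $T_x^\Phi = T := T_{\lambda\Q}^\Phi$ and $\widetilde x \in X_{\widetilde\lambda\Q}^\Phi$ with $T_{\widetilde x}^\Phi = \widetilde T := T_{\widetilde\lambda\Q}^\Phi$. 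Since $\lambda\Q \neq \widetilde\lambda\Q$ are nonzero rational classes inside $\imath\R$, an elementary computation shows that $T/\widetilde T$ is irrational (it is a nonzero rational multiple of $\mbox{Im}\,\widetilde\lambda/\mbox{Im}\,\lambda$).

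Set $z := x + \widetilde x$. Because $X_{\lambda\Q}^\Phi \cap X_{\widetilde\lambda\Q}^\Phi = \mbox{Fix}\,\Phi$ and the rotation parts of $x,\widetilde x$ are nonzero (since $T,\widetilde T > 0$), the sum map $\Phi(\R, x) \times \Phi(\R, \widetilde x) \to X$ is an injective continuous map between compact spaces, hence a homeomorphism onto its image $\Phi(\R, x) + \Phi(\R, \widetilde x)$; Kronecker equidistribution (using $T/\widetilde T \notin \Q$) identifies this image with the $\Phi$-orbit closure $K := \overline{\Phi(\R, z)}$. Thus $K$ is a topological $2$-torus on which $\Phi$ acts as a Kronecker flow with direction vector $(1/T, 1/\widetilde T)$. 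By Proposition \ref{prop24}, $h(K)$ is the $\Psi$-orbit closure of $h(z)$, a $\Psi$-invariant topological $2$-torus in $Y$; since $\Psi$ is bounded linear, it carries a Kronecker $\Psi$-action with two $\Q$-independent frequencies $\beta_1, \beta_2$.

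The frequencies are pinned down by two symmetric degenerations. Replacing $\widetilde x$ by $\epsilon\widetilde x$ and letting $\epsilon \to 0^+$, the $2$-tori $K_\epsilon$ converge in Hausdorff distance to the circle $\Phi(\R, x)$, and by continuity $h(K_\epsilon) \to \Psi(\R, h(x))$, a circle of $\Psi$-period $T_{h(x)}^\Psi = \alpha T$ (by Lemma \ref{lem4n2}(i) applied to the restriction of $\Phi$ to $X_{\lambda\Q}^\Phi$); because the active eigenfrequencies of $h(z_\epsilon)$ are independent of $\epsilon > 0$, this forces one of $\beta_1, \beta_2$, say $\beta_1$, to equal $2\pi/(\alpha T)$. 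The symmetric deformation $x \to 0$ yields $\beta_2 = 2\pi/(\widetilde\alpha \widetilde T)$. Next, the two ``fundamental circles'' $C_1 := \Phi(\R, x) + \widetilde x$ and $C_2 := x + \Phi(\R, \widetilde x)$ of $K$ carry the standard basis $(1, 0), (0, 1)$ of $H_1(K) \cong \Z^2$. Tracking their $h$-images as primitive embedded circles through the continuously varying $2$-tori $h(K_\epsilon)$ and $h(K_\delta)$, and using local constancy of the discrete $H_1$-invariant, one shows that $h_\ast \colon H_1(K) \to H_1(h(K))$ is the identity in the natural Kronecker bases.

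Since $h$ sends $\Phi$-orbits to $\Psi$-orbits in an orientation-preserving way, the direction vector $(1/T, 1/\widetilde T)$ of $\Phi$ on $K$ must be mapped by $h_\ast = \mbox{id}$ to a positive scalar multiple of the direction vector $(1/(\alpha T), 1/(\widetilde\alpha\widetilde T))$ of $\Psi$ on $h(K)$; equating components forces $\alpha = \widetilde\alpha$, as required. The main obstacle is this homology-tracking step: generic $C^0$-orbit equivalences of Kronecker flows on $\T^2$ are rigid only up to a $\mbox{GL}(2, \Z)$-action on slopes, so the needed rigidity must come from the global structure of $h$ as a homeomorphism of the ambient normed space respecting rational classes (Lemma \ref{lem4n3}); ruling out any nontrivial Dehn-twist contribution to $h_\ast$ across the continuous deformation is the delicate part of the argument.
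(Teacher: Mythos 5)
Your outline correctly isolates the two places where the real work lies, but neither one is actually carried out, and the first is handled by an implicit assumption that the paper explicitly warns against.

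First, the step ``because the active eigenfrequencies of $h(z_\epsilon)$ are independent of $\epsilon > 0$'' is unjustified in general. It is tantamount to assuming that $\sigma(\Phi)$ generates exactly two non-zero rational classes, i.e., that $X_{1,2} = X_{\lambda\Q}^\Phi \oplus X_{\widetilde\lambda\Q}^\Phi$. In general there may be further rational classes $\lambda_3\Q, \lambda_4\Q, \ldots$ contained in $\lambda\Q + \widetilde\lambda\Q$, and the $\Psi$-orbit closure $h(K_\epsilon)$ --- still a $2$-torus --- may have its two active rational classes drawn from a larger collection, and its Kronecker frequencies need not stay put as $\epsilon$ varies. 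This is precisely the gap in Ladis's proof that Section \ref{sec4} discusses, and it is why the paper's Step II introduces the lattice $\cLL_\Z$ and the canonical bases $b_1, b_2, c_1, c_2$: the explicit parametrizations $p_x, q_y$ are built over all of $X_{1,2}$, $Y_{1,2}$ simultaneously, rather than over a single $2$-torus, precisely so that the $\R^2$-reduction is compatible with all the rationally dependent classes at once.

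Second, the homology-tracking step is asserted, not proved, and you say so yourself (``ruling out any nontrivial Dehn-twist contribution $\ldots$ is the delicate part''). The difficulty is genuine and is also not quite of the shape you describe: the circles $C_1 = \Phi(\R, x) + \widetilde x$ and $C_2 = x + \Phi(\R,\widetilde x)$ are \emph{not} $\Phi$-orbits, so $h(C_j)$ is not a $\Psi$-invariant circle in $h(K)$ and carries no canonical relationship to the $\Psi$-Kronecker coordinates there. Moreover, the tori $h(K_\epsilon)$ degenerate as $\epsilon\downarrow 0$, so ``local constancy of the discrete $H_1$-invariant'' requires a parallel transport in a local system over a punctured disk whose fiber collapses at the puncture --- exactly the classical vanishing-cycle setting, where the monodromy is a Dehn twist. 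Showing that no such twist occurs is the content, not an afterthought. The paper instead pins down the degree matrix $L = L_{f_x}$ by a direct and global argument: $L_{f_x}$ is shown to be constant on the connected open dense set $h^{-1}(V)$, and its entries are extracted by the quantitative estimates (\ref{eq4n450})--(\ref{eq4n455}) for $x$ near $x_1$ and near $x_2$, which also yield the divisibility relations between the rows of $[\,b_1\,|\,b_2\,]$ and $[\,c_1\,|\,c_2\,]$ needed in Step V. No degeneration of the tori, and no appeal to a vanishing-cycle argument, is used. Your final step --- invoking the rigidity Proposition \ref{prop4nx} to match the direction vectors once $h_*$ is the identity --- is sound, and is indeed how the paper's Step V finishes; but the two preceding gaps need to be filled with arguments of the kind in Steps I--IV before that conclusion is available.
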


The proof of Lemma \ref{lem4ny} given below is somewhat subtle. It
makes use of a few elementary facts regarding maps of the $2$-torus
$\T := \R^2/ \Z^2$. Specifically, recall that with every continuous map $f:\T \to \T$ one can
associate a continuous function $F_f :\R^2 \to \R^2$ with $f(x+\Z^2) =
F_f(x) + \Z^2$ for all $x\in \R^2$, as well as $\sup_{x\in \R^2} \| F_f(x) -
L_f x\|<+\infty$ for a unique $L_f
\in \Z^{2\times 2}$. Two continuous maps $f,\widetilde{f}:\T \to \T$ are homotopic if
and only if $L_f = L_{\widetilde{f}}$; moreover, $L_{f\circ \widetilde{f}} = L_f L_{\widetilde{f}}$.
Also, if $f_n \to f$ uniformly on $\T$, then $L_{f_n} = L_f$ for all
sufficiently large $n$.

Given any $u\in \R^2$, let $\kappa_u (t, z) = z + ut $ for all $(t, z) \in \R \times \T$. Thus $\kappa_u$ simply
is the Kronecker (or parallel) flow on $\T$ generated by the differential equation $\dot z =
u$. Recall that for every $z\in \T$, the $\kappa_u$-orbit $\kappa_u (\R, z)$ is
either a singleton (if $u=0$), homeomorphic to a circle (if $au \in
\Z^2\setminus \{0\}$ for some $a\in \R$), or dense in $\T$. Variants of the following simple rigidity property of
Kronecker flows appear to have long been part of dynamical systems folklore; cf.\ \cite[Thm.2]{AM}
and \cite[Lem.6]{Ladis}.

\begin{prop}\label{prop4nx}
Let $u,\widetilde{u}\in \R^2$. If $f:\T \to
\T$ is continuous and maps some $\kappa_u$-orbit into a
$\kappa_{\widetilde{u}}$-orbit, i.e., $f\circ \kappa_u(\R , z) \subset \kappa_{\widetilde{u}}
(\R , \widetilde{z})$ for some $z, \widetilde{z} \in \T$, then $L_f u, \widetilde
{u}$ are linearly dependent.
\end{prop}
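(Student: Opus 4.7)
The plan is to lift the orbit inclusion to the universal cover $\R^2$ of $\T$ and combine a connectedness observation with the asymptotic identity $F_f(x) = L_f x + O(1)$.

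Fix lifts $\hat z, \hat{\widetilde z} \in \R^2$ of $z, \widetilde z$, and note that $\kappa_u$ lifts to $(t,x)\mapsto x + ut$ on $\R^2$. The hypothesis $f\circ \kappa_u(\R,z)\subset \kappa_{\widetilde u}(\R,\widetilde z)$ then translates into
$$
F_f(\hat z + ut) \in \hat{\widetilde z} + \R \widetilde u + \Z^2 \quad \forall t\in \R \, .
$$
I would next analyse the set $E := \R\widetilde u + \Z^2 \subset \R^2$, showing that its connected components are precisely the parallel affine lines $\R\widetilde u + n$, $n\in \Z^2$. This is clear when $\widetilde u = 0$, for then $E = \Z^2$ is discrete; when $\widetilde u \ne 0$, the quotient map $\pi : \R^2 \to \R^2/\R\widetilde u \cong \R$ carries $E$ onto the subgroup $\pi(\Z^2) \le \R$, which is either discrete or a proper dense subgroup, and hence totally disconnected in either case. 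Since $t \mapsto \pi \bigl( F_f(\hat z + ut) - \hat{\widetilde z}\bigr)$ is continuous with image in $\pi(\Z^2)$, it must be constant, placing the continuous path $t\mapsto F_f(\hat z + ut) - \hat{\widetilde z}$ on a single line $\R\widetilde u + n_0$. Equivalently, there exist $n_0 \in \Z^2$ and a continuous $S : \R \to \R$ with
$$
F_f(\hat z + ut) = \hat{\widetilde z} + n_0 + \widetilde u\, S(t) \quad \forall t\in \R \, . \quad (\star)
$$

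To conclude, I would combine $(\star)$ with the defining bound $\sup_{x\in \R^2} \|F_f(x) - L_f x\| < +\infty$, which rewrites the left-hand side of $(\star)$ as $L_f \hat z + t\, L_f u + O(1)$ uniformly in $t$. Hence $\widetilde u\, S(t) = t\, L_f u + O(1)$ as $|t|\to +\infty$. If $L_f u = 0$, linear dependence is trivial. Otherwise, dividing by $t$ and letting $t\to +\infty$ shows that $L_f u$ lies in the closed subspace $\R\widetilde u$, which is exactly the asserted linear dependence; the degenerate case $\widetilde u = 0$ is subsumed, since $(\star)$ then forces $F_f(\hat z + ut)$ to be constant in $t$, and the same estimate yields $L_f u = 0$.

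I expect the main obstacle to be the connectedness step, i.e., verifying that every connected subset of $E$ lies in a single line of the form $\R\widetilde u + n$. The delicate case is when $\widetilde u$ has irrationally related components, in which event $E$ is dense in $\R^2$; here one must invoke the elementary but essential fact that every proper subgroup of $\R$ is totally disconnected. The remaining estimates are routine bookkeeping with the bounded error term.
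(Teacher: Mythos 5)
Your proof is correct, and the reasoning is sound at every step. The paper itself does not supply a proof of Proposition \ref{prop4nx} (it is one of the auxiliary facts deferred to the companion document \cite{Wsupp} and labelled as dynamical-systems folklore), so there is no in-text argument to compare against; on its own merits, your argument is a clean and complete version of the standard approach. The lift to $\R^2$ and the reduction to the bounded error $F_f(x) = L_f x + O(1)$ are exactly the right tools. The one step that deserves the scrutiny you gave it --- that a continuous path $t\mapsto F_f(\hat z + ut) - \hat{\widetilde z}$ valued in $\R\widetilde u + \Z^2$ must stay on a single affine line --- is handled correctly via the observation that $\pi(\Z^2)$ is a countable, hence proper, hence totally disconnected subgroup of $\R^2/\R\widetilde u \cong \R$, so the continuous image of $\R$ under $\pi\circ(F_f(\hat z + u\,\cdot\,) - \hat{\widetilde z})$ is a single point. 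The resulting representation $(\star)$, with $S$ continuous since $\R\widetilde u$ is a line, then yields $\widetilde u\, S(t) = t\,L_f u + O(1)$, and dividing by $t$ and letting $t\to+\infty$ forces $L_f u \in \R\widetilde u$; the degenerate cases $L_f u = 0$ and $\widetilde u = 0$ are correctly dispatched. No gaps.
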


\begin{rem}
All concepts regarding $\T$ recalled above have precise analogues on the
$m$-torus $\R^m / \Z^m$ for all $m\in \N$, and Proposition
\ref{prop4nx} carries over verbatim with $u,\widetilde{u}\in \R^m$ and their
associated $m$-dimensional Kronecker flows. Only the special case of $m=2$,
however, plays a role in what follows.
\end{rem}

\begin{proof}[Proof of Lemma \ref{lem4ny}]
As in the proof of Lemma \ref{lem4n2}, denote $A^{\Phi}, A^{\Psi}$
simply by $A,B$. Also, let $\lambda_1 \Q, \ldots , \lambda_{\ell} \Q$ and $\mu_1\Q , \ldots,
\mu_{\ell }\Q$, with $\ell \in \N_0$, be the distinct rational classes other
than $\{0\}$ generated by $\sigma (\Phi)$ and $\sigma (\Psi)$
respectively, and $h_{\Q} (\lambda_j \Q) = \mu_j \Q$ for $j=1, \ldots
, \ell $. As there is nothing to prove otherwise, assume $\ell \ge
2$, and let $\lambda_1 = \lambda$, $\lambda_2 = \widetilde{\lambda}$. 
For the reader's convenience, the proof is carried out in several separate steps.

\smallskip

\noindent
{\em Step I -- Topological preliminaries.}
Let $X_{j,k} =\sum_{\lambda \in \sigma (\Phi) \cap (\lambda_j \Q + \lambda_k \Q)}
X_{\lambda \Q}^{\Phi}$ for every $1\le j \le k \le \ell $, and
similarly let $Y_{j,k} = \sum_{\mu \in \sigma (\Psi) \cap (\mu_j \Q + \mu_k \Q)}
Y_{\mu \Q}^{\Psi}$. Clearly, $X_{j,k}$ is $\Phi$-invariant and
contains both $X_{\lambda_j\Q}^{\Phi}$ ($=X_{j,j}$) and $X_{\lambda_k
  \Q}^{\Phi}$. Moreover, if $\{ j_1,k_1\} \ne \{ j_2,k_2\}$ then
$X_{j_1, k_1} \cap X_{j_2, k_2} \subset X_{\lambda \Q}^{\Phi} \subset
\mbox{\rm Per}\, \Phi$, with an appropriate $\lambda \in \sigma
(\Phi)$. Also, note that $x\in X_{j,k} \setminus \mbox{\rm Per}\,
\Phi$ for some $j,k$ if and only if $\overline{ \Phi (\R,  x ) }$ is
homeomorphic to $\T$. Since this property is
preserved under orbit equivalence, given any $x\in X_{1,2}$, there
exist $j,k$, possibly depending on $x$, such that $h(x)\in
Y_{j,k}$. Thus the closed, connected sets $\{ h^{-1} (Y_{j,k}): 1 \le
j\le k \le \ell \}$ cover $X_{1,2} \setminus \mbox{\rm Per}\, \Phi$. Since
the latter set is connected, and $h^{-1} (Y_{j_1,k_1} \cap
Y_{j_2,k_2}) \subset h^{-1} (\mbox{\rm Per}\, \Psi) = \mbox{\rm Per}\,
\Phi$, the same argument as in the proof of Lemma \ref{lem4n3} demonstrates
that $h(X_{1,2}) \subset Y_{j,k}$ for some $j,k$, and since $Y_{\mu_i
  \Q}^{\Psi} = h(X_{\lambda_i\Q}^{\Phi})$ for $i=1,2$, it is clear
that in fact $h(X_{1,2})\subset Y_{1,2}$. Reversing the roles of
$\Phi$ and $\Psi$ yields $h(X_{1,2}) = Y_{1,2}$. Henceforth, assume
w.l.o.g.\ that $X_{1,2} = X$ and $Y_{1,2} = Y$. (Otherwise, all
topological notions employed in Steps III to V below have to be interpreted relative to $X_{1,2}$ and
$Y_{1,2}$, respectively.)

\smallskip

\noindent
{\em Step II -- Arithmetical preliminaries.}
For convenience, let $Z_0= \mbox{\rm ker}\, A = \mbox{\rm Fix}\,
\Phi$, and for every $j = 1, \ldots, \ell$ let $Z_j = \bigoplus_{s\in \R^+ :\imath s \in \lambda_j \Q} \mbox{\rm
  ker}\, (A^2 + s^2 \, \mbox{\rm id}_X)$, and also let $T_j = T_{\lambda_j
  \Q}^{\Phi}$. With this, $X = \bigoplus_{j=0}^{\ell} Z_j$, and for each
$j=1, \ldots, \ell$ the eigenvalue $\lambda_j$ is a rational multiple of
$2\pi \imath/ T_j$. Since $X = X_{1,2}$ by assumption, there
exist unique $k_{j,1}, k_{j,2}\in \Z$, $k_{j}\in \N$ with
$\mbox{\rm gcd}\, (k_{j,1}, k_{j,2}, k_{j})=1$ and
$$
k_j / T_j = k_{j,1} / T_1 + k_{j,2} / T_2  \quad \forall j = 1 , \ldots ,
\ell \, .
$$
Let $\cLL_{\R}$ be the subspace of $\R^{\ell}$ given by
$$
\cLL_{\R} = \{  x\in \R^{\ell} : k_{j,1} x_{,1} + k_{j,2} x_{,2} -
k_j x_{,j} = 0 \enspace  \forall j = 1, \ldots, \ell  \} \, .
$$
Note that $\cLL_{\R}$ is two-dimensional and contains two linearly
independent integer vectors. (If $\ell = 2$ then simply $\cLL_{\R} = \R^2$.) Hence $\cLL_{\Z}:= \cLL_{\R} \cap \Z^{\ell}$ is
a two-dimensional lattice, that is, a discrete additive subgroup of
$\cLL_{\R}$. Let $b_1, b_2\in \Z^{\ell}$ be a basis of this lattice,
i.e., $\cLL_{\Z} = b_1 \Z + b_2 \Z$. Though not unique per se, the basis $b_1, b_2$ is
uniquely determined under the additional assumption that
\begin{equation}\label{eq4n43}
b_{1,1}>0 \, , \quad b_{2,1}= 0 \, , \quad 0 \le b_{1,2} < b_{2,2} \, .
\end{equation}
(Note that if $\ell=2$ then simply $b_{1,1} = b_{2,2} = 1$,
$b_{2,1} = b_{1,2} = 0$.) Since clearly $[\, 1/T_1, \ldots ,
1/T_{\ell }\, ]^{\top} \in \cLL_{\R}\setminus \{0\}$, there exists a
unique $u\in \R^2\setminus \{0\}$ such that
\begin{equation}\label{eq4n44}
[\, b_1 \, | \, b_2\, ] \, u =  [\, 1/ T_1, \ldots ,
1/ T_{\ell } \, ]^{\top}  \, .
\end{equation}
Notice in particular that $u_{,1} \Q + u_{,2} \Q = 1/T_1 \Q +
1/T_2 \Q$, and hence $u_{,1}, u_{,2}$ are rationally
independent because $1/T_1, 1/T_2$ are. 

A completely analogous construction can be carried out in $Y$: Let
$W_0 = \mbox{\rm ker}\, B = \mbox{\rm Fix}\, \Psi $, and let $W_j =
\bigoplus_{s\in \R^+:\imath s \in \mu_j \Q} \mbox{\rm ker}\, (B^2 +
s^2 \, \mbox{\rm id}_Y)$ for $j = 1, \ldots , \ell$, as well as $S_j =
T_{\mu_j \Q}^{\Psi}$. Then $Y = \bigoplus_{j=0}^{\ell} W_j$, and the
same procedure as above yields unique $c_1, c_2 \in \Z^{\ell}$
with
\begin{equation}\label{eq4n45}
c_{1,1}>0 \, , \quad c_{2,1}= 0 \, , \quad 0 \le c_{1,2} < c_{2,2} \, ,
\end{equation}
(and in fact $c_{1,1} = c_{2,2} =1$, $c_{2,1} = c_{1,2}=0$ in case $\ell =2$),
together with a unique $\widetilde{u}\in \R^2 \setminus \{0\}$ such that
\begin{equation}\label{eq4n46}
[\, c_1 \, | \, c_2\, ] \, \widetilde{u}  =  [\, 1/ S_1, \ldots , 1/S_{\ell } \, ]^{\top}  \, ;
\end{equation}
again, $\widetilde{u}_{,1}, \widetilde{u}_{,2}$ are rationally independent.

\smallskip

\noindent
{\em Step III -- Construction of maps on $\T$.}
Denote by $P_0, \ldots, P_{\ell}$ the complementary
linear projections associated with the decomposition $X=\bigoplus_{j=0}^{\ell}
Z_j$, i.e., $P_0$ is the projection of $X$ onto $Z_0$ along $\bigoplus_{j=1}^{\ell}
Z_j$ etc. Note that $P_j \Phi_t = \Phi_t P_j$ for all $j=0, 1, \ldots, \ell$ and $t\in \R$, due to the $\Phi$-invariance of $Z_j$. Given any $x\in X$, define $p_x : \T \to X$ as
$$
p_x (z) = P_0 x + \sum\nolimits_{j=1}^{\ell} \Phi_{(b_{1,j} z_{,1} +
  b_{2,j} z_{,2}) T_j} P_j x \quad \forall z \in \T \, ,
$$
with $b_1, b_2\in \Z^{\ell}$ as in Step II. Clearly, $p_x$ is
continuous, $p_x(0+\Z^2) = x$, and with an appropriate constant $\nu \in \R^+$,
\begin{equation}\label{eq4n46a}
\| p_x (z) - p_{\widetilde{x}} (z)\| \le \nu \| x - \widetilde{x}\|
\quad \forall x, \widetilde{x}\in X , z \in \T \, .
\end{equation}
Thus $p_{x_n}\to p_x$ uniformly on $\T$ whenever $x_n\to x$.
Also, with the unique $u$ from (\ref{eq4n44})
$$
p_x(ut +\Z^2) = P_0 x + \sum\nolimits_{j=1}^{\ell} \Phi_{t(b_{1,j} u_{,1}
  + b_{2,j} u_{,2})T_j} P_j x = P_0 x + \sum\nolimits_{j=1}^{\ell} \Phi_t
P_j x = \Phi_t x \quad \forall t \in \R \, .
$$
In terms of the Kronecker flow $\kappa_u$ on $\T$, this simply means
that
\begin{equation}\label{eq4n47}
p_x \circ \kappa_u (t , 0+\Z^2) = \Phi_t x \quad \forall t \in \R \, .
\end{equation}
Since $u_{,1}, u_{,2}$ are rationally independent, the $\kappa_u$-orbit $\kappa_u
(\R, 0+\Z^2)$ is dense in $\T$, and hence $p_x (\T) = \overline{ \Phi (\R, x)
}$. Thus, $p_x$ maps $\T$ continuously onto the closure
of the $\Phi$-orbit of $x$, for every $x\in X$.

Next, consider $U: = \{x\in X: T_{P_j x}^{\Phi} = T_j \enspace \forall j = 1,
\ldots , \ell \}$, an open, dense, and connected subset of $X$ by
Propositions \ref{prop4n1a} and \ref{prop4npx}. Whenever $x\in U$,
note that $p_x (z) = p_x (\widetilde{z})$ implies $z-\widetilde{z} \in
\Z^2$, i.e., $p_x$ is one-to-one and hence a homeomorphism from
$\T$ onto $\overline{ \Phi (\R , x)}$. Moreover, $p_x^{-1}$
depends continuously on $x\in U$ in the following sense: If $x_n \to x$
in $U$, and if $(\widetilde{x}_n)$ converges to some $\widetilde{x}$
with $\widetilde{x}_n \in p_{x_n} (\T)$ for every $n$, then 
$\widetilde{x}\in p_x (\T)$ and $p_{x_n}^{-1} (\widetilde{x}_n) \to
p_x^{-1} (\widetilde{x})$ in $\T$. To see this, let $\widetilde{x}_n
= p_{x_n} (z_n)$ with the appropriate $z_n \in \T$, and note that
every subsequence $(z_{n_k})$ contains a subsequence that converges in
$\T$ to some $z$ with $\widetilde{x} = p_x (z)$. Since $p_x$ is
one-to-one, $z$ is uniquely determined by this property, and so $(z_n)
= \bigl( p_{x_n}^{-1}(\widetilde{x}_n)\bigr)$ converges to $z =
p_x^{-1} (\widetilde{x})$.

Again, a completely analogous construction can be carried out in $Y$:
Denote by $Q_0, \ldots , Q_{\ell}$ the projections
associated with the decomposition $Y = \bigoplus_{j=0}^{\ell} W_j$
and, given any $y\in Y$, define $q_y :\T \to Y$ as
$$
q_y (z) = Q_0 y + \sum\nolimits_{j=1}^{\ell} \Psi_{(c_{1,j} z_{,1} +
  c_{2,j} z_{,2}) S_j} Q_j y \quad \forall z  \in \T \, ,
$$
with $c_1, c_2 \in \Z^{\ell}$ as in Step II. As before, $q_y$ is
continuous, $q_y(0+\Z^2)=y$, and $q_{y_n} \to q_y$ uniformly on $\T$
whenever $y_n \to y$. In analogy to (\ref{eq4n47}), with the unique $\widetilde{u}$ from
(\ref{eq4n46}), 
\begin{equation}\label{eq4n48}
q_y \circ \kappa_{\widetilde{u}} (t , 0+\Z^2) = \Psi_t y \quad \forall t \in \R \, ,
\end{equation}
and $q_y$ maps $\T$ continuously onto
$\overline{\Psi (\R, y)}$. With the
open, dense, and connected subset $V := \{y\in Y: T_{Q_j y}^{\Psi} =
S_j \enspace \forall j = 1, \ldots , \ell\}$ of $Y$, the map $q_y$ is
one-to-one whenever $y\in V$, and $q_y^{-1}$ depends continuously on
$y\in V$, in the sense made precise earlier.

Combining the homeomorphism $h$ with the maps introduced so far yields
a continuous map $f_x:\T \to \T$, given by
$$
f_x (z) := q_{h(x)}^{-1} \circ h \circ p_x (z) \quad \forall
z  \in \T \, ,
$$
with $f_x(0+\Z^2) = 0+\Z^2$, provided that $x\in h^{-1}(V)$; see also Figure \ref{fig4}. 
\begin{figure}[!ht] 
\psfrag{tx1}[]{$\overline{ \Phi (\R, x)}$}
\psfrag{tx3}[]{$\overline{ \Psi \bigl( \R, h(x) \bigr)}$}
\psfrag{tx2}[]{$h$}
\psfrag{tx4}[]{$\T$}
\psfrag{tx6}[]{$\T$}
\psfrag{tx5}[]{$f_x = q_{h(x)}^{-1} \circ h \circ p_x$}
\psfrag{tx7}[r]{$p_x$}
\psfrag{tx8}[l]{$q_{h(x)}$}
\psfrag{t0}[]{$0+\Z^2$}
\psfrag{tu}[]{$u$}
\psfrag{tv}[]{$\widetilde{u}$}
\psfrag{txx}[]{$x$}
\psfrag{txsp}[]{$X$}
\psfrag{tyy}[]{$h(x)$}
\psfrag{tysp}[]{$Y$}
%
%
%
\begin{center}
\includegraphics{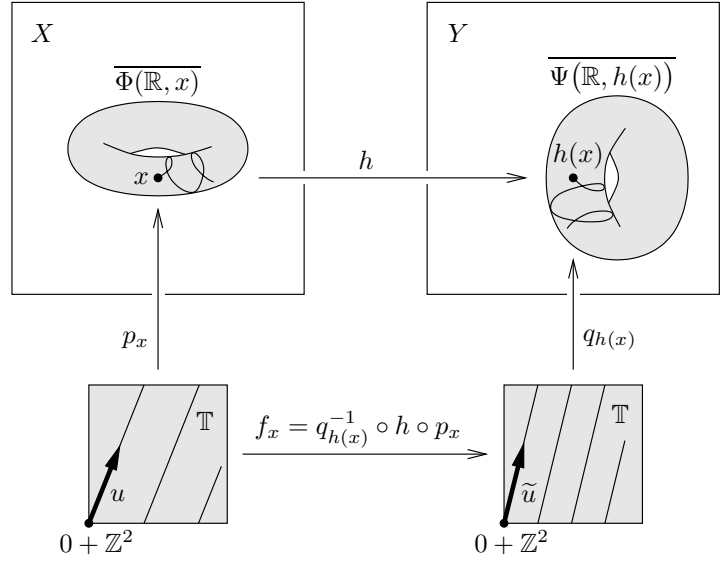}
\end{center}
\caption{The map $f_x : \T \to \T$ is well-defined and continuous
  provided that $x\in h^{-1} (V)\subset X$ and is a homeomorphism whenever
  $x\in U\cap h^{-1} (V)$.}\label{fig4}
\end{figure}
Notice that $h^{-1}(V)\subset X$ is
open, dense, and connected. As seen earlier, if $x_n \to x$ in
$h^{-1}(V)$ then $f_{x_n}  \to f_x $ pointwise. In fact, using the
analogue for $q_{h(x)}$ of (\ref{eq4n46a}), it is readily seen that 
$f_{x_n}\to f_x$ uniformly on $\T$. Thus $x\mapsto L_{f_x}$ is
continuous on $h^{-1}(V)$, and indeed constant because $h^{-1}(V)$ is
connected. In other words, $L_{f_x} = L$ for a unique $L\in
\Z^{2\times 2}$ and every $x\in h^{-1}(V)$. Recall that $p_x$, and
hence also $f_x$, is a homeomorphism whenever $x\in U \cap
h^{-1}(V)$. This set, though perhaps not connected, is open and
dense in $X$, so certainly not empty. Thus $L$ is
invertible over $\Z$, or equivalently $|\mbox{\rm det}\, L| = 1$.

\smallskip

\noindent
{\em Step IV -- Properties of $L$ and $[\, b_1 \, | \, b_2 \, ], [\,
  c_1 \, | \, c_2 \, ]$.} The scene is now set for recognizing some finer
properties of the matrices $L\in \Z^{2\times 2}$ and $[\, b_1 \, | \, b_2 \, ], [\,
  c_1 \, | \, c_2 \, ]\in \Z^{\ell \times 2}$, which truly is the crux of
  this proof. Concretely, it will be shown both that $L = I_2$ that and
  the first two rows of $[\, c_1 \, | \, c_2 \, ]$ are positive
  integer multiples of the corresponding rows of $[\, b_1 \, | \, b_2
  \, ]$. To this end, for $i=1,2$ fix $x_i\in
  Z_i $ so that $T_{x_i}^{\Phi} = T_i$. Then $y_i := h(x_i) \in W_0
  \oplus W_i$ and $T_{y_i}^{\Psi} = S_i$. Also, by (\ref{eq4n46a})
  and its analogue for $q_y$, picking $\nu \in \R^+$ large enough
  ensures that
$$
\|p_x(z) - p_{\widetilde{x} } (z)\| \le \nu \|x - \widetilde {x}\| \,
, \quad
\|q_y(z) - q_{\widetilde{y} } (z)\| \le \nu \|y - \widetilde {y}\| 
\quad
\forall x, \widetilde{x}\in X , y , \widetilde{y}\in Y , z \in \T \, .
$$
Since $S_i$ is the minimal $\Psi$-period of $y_i$, given any
$\varepsilon>0$, there exists a $\delta_1(\varepsilon) > 0$ such that
\begin{equation}\label{eq4n450}
\|\Psi_t y_i - \Psi_{\widetilde{t}} y_i\| < \delta_1 (\varepsilon) \enspace
\mbox{\rm for some } t, \widetilde{t} \in \R \quad
\Longrightarrow \quad
\min\nolimits_{k\in \Z}|(t - \widetilde{t}\, ) /S_i - k | <
\varepsilon \, .
\end{equation}
By the continuity of $h$ and the periodicity of $x_i$,
there also exists a $\delta_2(\varepsilon)>0$ such that
\begin{equation}\label{eq4n451}
\|x  - \Phi_t x_i\| < \delta_2 (\varepsilon) \enspace
\mbox{\rm for some } t \in \R \quad
\Longrightarrow \quad
\|h(x) - h(\Phi_t x_i)\| < \frac{\delta_1 (\varepsilon)}{2(1+\nu)} \, .
\end{equation}
Moreover, notice the simple estimate, valid for $x\in h^{-1}(V)$ and
$i=1,2$,
\begin{equation}\label{eq4n452}
\|q_{y_i} \circ f_x (z) - h \circ p_{x_i} (z)\| \le \nu \| h(x) - y_i \| + \| h\circ p_x (z) - h \circ p_{x_i} (z)\| \quad \forall z \in
\T \, .
\end{equation}
Finally, let $z_s, \widetilde{z}_s\in \T$ be given by
$$
z_s = \left[ \! \begin{array}{c} 0 \\ s \end{array} \! \right]+ \Z^2 \,
,\quad
\widetilde{z}_s = \left[ \! \begin{array}{r} -b_{2,2} s \\ b_{1,2}
    s \end{array} \! \right]+ \Z^2 \quad
\forall s\in \R \, ,
$$
and observe that, for $i=1,2$,
\begin{equation}\label{eq4n453}
p_{x_i} (z_s) = \Phi_{b_{2,i}s T_i} x_i \, , \quad
q_{y_i} \circ f_x (z_s) = \Psi_{\gamma_i (s)S_i} y_i \quad \forall s\in
\R \, ,
\end{equation}
with $\gamma_i (s) = [\, c_{1,i}, c_{2,i}\, ] F_{f_x} (z_s)$. Similarly
\begin{equation}\label{eq4n454}
p_{x_i} (\widetilde{z}_s) = \Phi_{(- b_{1,i} b_{2,2} + b_{1,2} b_{2,i})s T_i} x_i \, , \quad
q_{y_i} \circ f_x (\widetilde{z}_s) = \Psi_{\widetilde{\gamma}_i (s)S_i}
y_i \quad \forall s\in \R \, ,
\end{equation}
with $\widetilde{\gamma}_i(s) = [\, c_{1,i}, c_{2,i}\, ] F_{f_x}
(\widetilde{z}_s)$. With these preparations, it is possible to
analyze $f_x$ for $x$ close to $x_1$ or $x_2$. For the reader's convenience, the analysis is carried
out in two separate sub-steps.

\smallskip

\noindent
{\em Sub-step IVa -- Analysis of $f_x$ for $x$ close to $x_1$.} 
Given any $0<\varepsilon < \frac14$, let
$\delta= \delta_2(\varepsilon)/(1+\nu)$ for convenience, and assume
that $x\in h^{-1} (V)$ with $\|x - x_1\|< \delta$. Then $\|h(x) - y_1\|< \frac12
\delta_1(\varepsilon)/(1+\nu)$ by (\ref{eq4n451}), and using
(\ref{eq4n453}) with $i=1$, recalling that $b_{2,1}=0$,
$$
\| p_x (z_s) - p_{x_1} (z_s)\|  = \| p_x (z_s) - x_1\| <
\frac{\nu}{1+\nu} \delta_2 (\varepsilon) < \delta_2 (\varepsilon) \, ,
$$
and hence $\| h \circ p_x (z_s) - h \circ p_{x_1} (z_s)\|< \frac12
\delta_1 (\varepsilon)/(1+\nu)$ as well. With (\ref{eq4n452}),
therefore, $\|\Psi_{\gamma_1(s) S_1} y_1 - y_1\|<\frac12
\delta_1(\varepsilon)$, and (\ref{eq4n450}) yields $\min_{k\in \Z}|
\gamma_1(s) - k|<\varepsilon$ for all $s\in \R$. Since $\gamma_1$ is
continuous, there exists a unique $k\in \Z$ such that $|\gamma_1(s)-k
|< \varepsilon$ for all $s$. Recall that $\gamma_1 (s) = [\, c_{1,1},
0\, ]F_{f_x}(z_s)$ and that $\sup_{s\in \R} |F_{f_x} (z_s) - L_{f_x} z_s|<+\infty$. Consequently,
$$
 \sup\nolimits_{s\in \R}
|c_{1,1} L_{1,2} s| = \sup\nolimits_{s\in \R} \big|[\, c_{1,1}, 0\, ]
L z_s\big|  < + \infty \, ,
$$
and since $c_{1,1}>0$, it follows that $L_{1,2}=0$, which in turn
implies $|L_{1,1}| = |L_{2,2}|=1$, because $|\mbox{\rm det}\, L|=1$.

Similarly, using (\ref{eq4n454}) with $i=1$,
$$
\|p_x (\widetilde{z}_s) - \Phi_{- b_{1,1} b_{2,2} s T_1} x_1\| =
\|p_x(\widetilde{z}_s) - p_{x_1} (\widetilde{z}_s)\| <
\frac{\nu}{1+\nu} \delta_2 (\varepsilon) \, ,
$$
and again $\| h\circ p_x (\widetilde{z}_s) - h \circ p_{x_1}
(\widetilde{z}_s)\|<\frac12 \delta_1 (\varepsilon)/(1+\nu)$, so that
(\ref{eq4n452}) now yields
$$
\|\Psi_{\widetilde{\gamma}_1 (s) S_1} y_1 - \Psi_{\tau_{x_1} (-b_{1,1}
  b_{2,2} s T_1)} y_1\| = \| q_{y_1} \circ f_x (\widetilde{z}_s) - h
\circ p_{x_1} (\widetilde{z}_s)\| < {\textstyle \frac12}  \delta_1
(\varepsilon)  \, .
$$
Hence $\min_{k\in \Z}| \widetilde{\gamma}_1 (s) - \tau_{x_1} (-
b_{1,1} b_{2,2} s T_1)/S_1 - k | < \varepsilon$ for all $s\in
\R$. Similarly to before, and since $L_{1,2}=0$, this implies that
$$
\sup\nolimits_{s\in \R} |b_{2,2}
c_{1,1} L_{1,1} s -  \tau_{x_1} (b_{1,1} b_{2,2} s T_1)/ S_1 | < + \infty \, .
$$
As $b_{1,1}, b_{2,2}, c_{1,1}$ all are positive, and $\tau_{x_1}$ is
increasing, $L_{1,1}\ge  0$, and so in fact $L_{1,1}=1$.

Finally, let $r = 1/(b_{1,1} b_{2,2})$ and note
that $p_{x_1} (\widetilde{z}_{s+r}) = p_{x_1} (\widetilde{z}_s)$ for
all $s$, but also
\begin{align*}
\|  \Psi_{\widetilde{\gamma}_1 (s+r)S_1} & y_1 - \Psi_{\widetilde{\gamma}_1
  (s)S_1} y_1\|  = \| q_{y_1} \circ f_x (\widetilde{z}_{s+r}) -
q_{y_1} \circ f_x (\widetilde{z}_s) \| \\[1mm]
& \le 2 \nu \| h(x) - y_1 \| + \| h \circ p_x (\widetilde{z}_{s+r}) - h
\circ p_{x_1} (\widetilde{z}_{s+r})\| +  \| h \circ p_x (\widetilde{z}_{s}) - h
\circ p_{x_1} (\widetilde{z}_{s})\|\\
& < 2\nu \frac{\delta_1(\varepsilon)}{2(1+\nu)} +
\frac{\delta_1(\varepsilon)}{2(1+\nu)} +
\frac{\delta_1(\varepsilon)}{2(1+\nu)} \\
& = \delta_1 (\varepsilon) \quad
\forall s\in \R \, .
\end{align*}
Deduce from (\ref{eq4n450}) that, with a unique $k\in \Z$,
\begin{equation}\label{eq4n455}
|\widetilde{\gamma}_1 (s+r) - \widetilde{\gamma}_1 (s) + k| < \varepsilon
\quad \forall s \in \R \, .
\end{equation}
Adding (\ref{eq4n455}) with $s=0,r, \ldots, (n-1)r$ yields
$|\widetilde{\gamma}_1 (nr) - \widetilde{\gamma}_1 (0) + nk| <
n\varepsilon$ for every $n\in \N$.
Since the difference between $\widetilde{\gamma}_1(nr) = [\, c_{1,1} ,
0\, ] F_{f_x} (\widetilde{z}_{nr})$ and $[\, c_{1,1}, 0\,
]L\widetilde{z}_{nr}= - c_{1,1}n/b_{1,1}$ remains bounded as $n\to
\infty$, it follows that $| c_{1,1} /b_{1,1}- k|\le \varepsilon$. Moreover, since $\varepsilon >0$
was arbitrary and $b_{1,1}, c_{1,1}$ are positive, in fact $c_{1,1}/b_{1,1} = k\in \N$. In summary, the analysis for $x$ being sufficiently
close to $x_1$ shows that $L_{1,1} =1$, $L_{1,2} =0$, and
$c_{1,1}/b_{1,1} \in \N$.

\smallskip

\noindent
{\em Sub-step IVb -- Analysis of $f_x$ for $x$ close to $x_2$.} 
A completely analogous analysis can be carried out for $x$ being close
to $x_2$. Specifically, given any $0 < \varepsilon < \frac14$, assume
that $x\in h^{-1} (V)$ with $\|x - x_2\|< \delta$. Similarly to before, (\ref{eq4n452}) and
(\ref{eq4n453}) now yield
$$
\|\Psi_{\gamma_2 (s) S_2} y_2 - \Psi_{\tau_{x_2} (b_{2,2}sT_2)}y_2\| =
\|q_{y_2} \circ f_x (z_s) - h \circ p_{x_2} (z_s)\| < {\textstyle
  \frac12} \delta_1 (\varepsilon)  \quad \forall s \in \R \, ,
$$
and consequently $\min_{k\in \Z}| \gamma_2(s) - \tau_{x_2} (b_{2,2}s
T_2) /S_2 - k | <\varepsilon$. As $\gamma_2(s) = [\, c_{1,2},
c_{2,2}\, ]F_{f_x}(z_s)$, this implies that
$$
\sup\nolimits_{s\in \R} |c_{2,2} L_{2,2} s - \tau_{x_2} (b_{2,2} s
T_2) /S_2 | < + \infty \, ,
$$
and hence $L_{2,2}\ge 0$, so in fact $L_{2,2} = 1$. As well, $p_{x_2}
(z_{s+1/b_{2,2}}) = p_{x_2} (z_s)$ for all $s$, but also
\begin{align*}
\| & \Psi_{\gamma_2 (s + 1/b_{2,2})S_2}  y_2 - \Psi_{\gamma_2(s) S_2} y_2\|
= \| q_{y_2} \circ f_x (z_{s + 1/b_{2,2}}) - q_{y_2} \circ f_x
(z_s)\| \\
& \le 2 \nu \|h(x) - y_2 \| + \| h \circ p_x (z_{s + 1/b_{2,2}}) - h
\circ p_{x_2} (z_{s + 1/b_{2,2}})\| + \| h \circ p_x (z_{s }) - h
\circ p_{x_2} (z_{s })\| \\
& < \delta_1 (\varepsilon)  \quad \forall s \in \R \, ,
\end{align*}
implying that $|\gamma_2 (s + 1/b_{2,2}) - \gamma_2 (s) - k| <
\varepsilon$ for a unique $k\in \Z$ and all $s\in \R$. 
By adding these inequalities for $s=0, 1/b_{2,2}, \ldots, (n-1)/b_{2,2}$, similarly
to before, it follows that
$$
| c_{2,2}/b_{2,2} - k| = \limsup\nolimits_{n\to \infty} | \gamma_2
(n /b_{2,2})/ n  - k | \le \varepsilon \, ,
$$
and since $\varepsilon > 0$ was arbitrary, $c_{2,2}/b_{2,2}\in
\N$. Finally, utilizing (\ref{eq4n452}) and (\ref{eq4n454}) with
$i=2$,
$$
\|\Psi_{\widetilde{\gamma}_2 (s) S_2} y_2 - y_2\| = \| q_{y_2} \circ f_x
(\widetilde{z}_s) - h \circ p_{x_2} (\widetilde{z}_s)\| <
{\textstyle \frac12} \delta_1 (\varepsilon)  \quad \forall s \in \R \, ,
$$
yields $\min_{k\in \Z}|\widetilde{\gamma}_2(s)- k| < \varepsilon$
for all $s$. Consequently, as $L_{1,1} = L_{2,2} = 1$ and $L_{1,2}=0$,
$$
\sup\nolimits_{s\in \R} |s (b_{1,2} c_{2,2} - b_{2,2} c_{1,2} -
L_{2,1} b_{2,2} c_{2,2})|  = \sup\nolimits_{s\in \R} \big|[\, c_{1,2}, c_{2,2}\, ] L \widetilde{z}_s\big| < + \infty \, .
$$
Thus necessarily $L_{2,1} = b_{1,2}/b_{2,2} - c_{1,2}/c_{2,2}$.
By (\ref{eq4n43}) and (\ref{eq4n45}), both ratios $b_{1,2}/b_{2,2}$
and $c_{1,2}/c_{2,2}$ are non-negative and strictly less than $1$. Thus $L_{2,1}=0$ and
$b_{1,2}/b_{2,2} = c_{1,2} /c_{2,2}$. In summary, the analysis for $x$
being sufficiently close to $x_2$ shows that $L_{2,1} = 0$,
$L_{2,2}=1$, and hence $L=I_2$, as well as $c_{2,2}/b_{2,2}\in \N$ and $b_{1,2}/
b_{2,2} = c_{1,2}/ c_{2,2}$. 

\smallskip

\noindent
{\em Step V -- Concluding the proof.}  For every $x\in U \cap h^{-1} (V)$ the map
$g_x: \T \to \T$ given by $g_x = p_x^{-1} \circ h^{-1} \circ
q_{h(x)}$ is a homeomorphism of $\T$, with $g_x = f_x^{-1}$, and
carrying out Step IV with the roles of $\Phi$ and $\Psi$ reversed
yields $L_{g_x} = L^{-1} = I_2$, as well as $b_{1,1}/c_{1,1},
b_{2,2} /c_{2,2} \in \N$. This shows that in fact $b_{1,1} =
c_{1,1}$, $b_{2,2} = c_{2,2}$, and hence also $b_{1,2} =
c_{1,2}$. With this, the proof is readily completed:
Combine (\ref{eq4n47}), (\ref{eq4n48}), the definition of $f_x$, and the fact that $\Phi$ is $(h,
\tau)$-related to $\Psi$, to deduce that for every $x \in h^{-1}(V)$,
$$
f_x \circ \kappa_u (t, 0+\Z^2) = q_{h(x)}^{-1} \circ h (\Phi_t x) =
q_{h(x)}^{-1} \bigl( \Psi_{\tau_x (t)} h(x)\bigr) = \kappa_{\widetilde{u}}
(\tau_x(t), 0+\Z^2) \quad \forall t\in \R \, ,
$$
where $u,\widetilde{u}\in \R^2\setminus \{0\}$ are determined by (\ref{eq4n44})
and (\ref{eq4n46}) respectively. In particular
\begin{equation}\label{eq4n410}
\left[
\begin{array}{cc}
b_{1,1} & 0 \\
b_{1,2} & b_{2,2} 
\end{array}
\right] u =
\left[\!
\begin{array}{c}
1/T_1 \\ 1/T_2
\end{array}
\! \right]  \, , \quad
\left[
\begin{array}{cc}
c_{1,1} & 0 \\
c_{1,2} & c_{2,2} 
\end{array}
\right] \widetilde{u} =
\left[ \!
\begin{array}{c}
1/S_1 \\ 1/S_2
\end{array} \!
\right]  \, .
\end{equation}
By Proposition \ref{prop4nx}, the vectors $L_{f_x} u,\widetilde{u}$ are linearly
dependent. Since $L_{f_x} = I_2$ and the two
matrices in (\ref{eq4n410}) are identical, linear
dependence of $L_{f_x} u,\widetilde{u}$ implies linear dependence of
$[\, 1/ T_1, 1/T_{2} \, ]^{\top}, [\, 1/ S_1, 1/S_{2}\, ]^{\top}$, that
is, 
$$
0 = \left|
\begin{array}{cc}
1/T_1 & 1/S_1 \\
1/T_2 & 1/S_2 
\end{array}
\right| = \frac1{S_1 S_2} \left( \frac{S_1}{T_1} -
  \frac{S_2}{T_2}\right) \, . 
$$
Thus, $T_{\mu_1 \Q}^{\Psi} /T_{\lambda_1 \Q}^{\Phi} = T_{\mu_2
  \Q}^{\Psi} /T_{\lambda_2 \Q}^{\Phi}$, as claimed.
\end{proof}

As alluded to earlier, by combining Lemmas \ref{lem4n2}, \ref{lem4n3},
and \ref{lem4ny} it is now easy to establish the ``only if'' part of
Theorem \ref{thm4n1}. (The ``if'' part is obvious.)

\begin{proof}[Proof of Theorem \ref{thm4n1}]
As in the proof of Lemma \ref{lem4ny}, let $\lambda_1\Q , \ldots ,
\lambda_{\ell} \Q$, with $\ell \in \N_0$, be the distinct rational classes
other than $\{0\}$ generated by $\sigma (\Phi)$; again there is
nothing to prove unless $\ell \ge 2$. For convenience, denote the
generators of the linear flows induced on $X_{\lambda_j\Q}^{\Phi}$ and
$Y_{h_{\Q} (\lambda _j \Q)}^{\Psi}$ by $A_j$ and $B_j$ respectively,
and let $X_{\lambda_j \Q}^{\Phi} = \mbox{\rm Fix}\, \Phi \oplus
\bigoplus_{k=1}^{m_j} X_{a_{j,k}}$, $Y_{h_{\Q} (\lambda_j \Q)}^{\Psi}
= \mbox{\rm Fix}\, \Psi \oplus \bigoplus_{k=1}^{m_j}
Y_{a_{j,k}/\alpha_j}$, in accordance with the proof of Lemma \ref{lem4n2}. As seen
in that proof, $H_j A_j = \alpha_j B_j H_j$, with $\alpha_j =
T_{h_{\Q} (\lambda_j \Q)}^{\Psi}/ T_{\lambda_j \Q}^{\Phi}$ and an
isomorphism $H_j : X_{\lambda_j \Q}^{\Phi} \to Y_{h_{\Q} (\lambda_j
  \Q)}^{\Psi}$ satisfying $H_j \mbox{\rm Fix}\, \Phi = \mbox{\rm
  Fix}\, \Psi$ as well as $H_j X_{a_{j,k}} = Y_{a_{j,k}/\alpha_j}$
for $k=1, \ldots , m_j$. By Lemma \ref{lem4ny}, $\alpha_j = \alpha_1$
for all $j=1,\ldots , \ell$. Since $X =\sum_{j=1}^{\ell} X_{\lambda_j
  \Q}^{\Phi}$ and $Y= \sum_{j=1}^{\ell} Y_{h_{\Q} (\lambda_j \Q)}^{\Psi}$,
letting $Hx = H_j x$ for $x\in \bigoplus_{k=1}^{m_j} X_{a_{j,k}}$
and $Hx = H_1x$ for $x\in \mbox{\rm Fix}\, \Phi$, yields a linear isomorphism $H:X\to Y$ with $HA^{\Phi} = \alpha_1 A^{\Psi} H$.
\end{proof}

\section{Proof of the classification theorems}\label{sec4}

Let $\Phi$ be a linear flow on $X$, a finite-dimensional normed space
over $\R$. The subspaces
\begin{align*}
X_{\sf S}^{\Phi} & := \{x \in X : \lim\nolimits_{t\to +\infty} \Phi_t x = 0
\} \,  , \\
X_{\sf C}^{\Phi} & := \{x \in X : \lim\nolimits_{|t|\to +\infty}
e^{-\varepsilon |t|}\Phi_t x = 0 \enspace \forall \varepsilon > 0
\} \,  , \\
X_{\sf U}^{\Phi} & := \{x \in X : \lim\nolimits_{t\to -\infty} \Phi_t x = 0
\} \,  ,
\end{align*}
referred to as the {\bf stable}, {\bf central}, and {\bf unstable}
space of $\Phi$, respectively, are $\Phi$-invariant, and $X =
X_{\sf S}^{\Phi} \oplus X_{\sf C}^{\Phi} \oplus X_{\sf U}^{\Phi}$;
see, e.g., \cite{CK} for an authoritative account on linear dynamical systems. Call $\Phi$ {\bf
  hyperbolic} if $X_{\sf C}^{\Phi} = \{0\}$, and {\bf central} if
$X_{\sf C}^{\Phi} = X$. For $\bullet = {\sf S}, {\sf C}, {\sf U}$, let $P_{\bullet}^{\Phi}$ be
the linear projection onto $X_{\bullet}^{\Phi}$ along
$\bigoplus_{\circ \ne \bullet} X_{\circ}^{\Phi}$. With this and
$\Phi_{\bullet}:= \Phi_{X_{\bullet}^{\Phi}}$, clearly $\Phi$ is linearly
flow equivalent to the product flow $\bigtimes_{\bullet}
\Phi_{\bullet}$, via the isomorphism $\bigtimes_{\bullet}
P_{\bullet}^{\Phi}$ and with $\tau_x = \mbox{\rm id}_{\R}$ for all $x\in
X$. By invariance, $P_{\bullet}^{\Phi} \Phi_t = \Phi_t
P_{\bullet}^{\Phi}$ for all $t\in \R$, and hence also
$P_{\bullet}^{\Phi} A^{\Phi} = A^{\Phi} P_{\bullet}^{\Phi}$.
Notice that if $\Phi$ is $(h,\tau)$-related to $\Psi$ then
$h(X_{\sf S}^{\Phi}) = Y_{\sf S}^{\Psi}$, $h(X_{\sf U}^{\Phi}) = Y_{\sf U}^{\Psi}$,
whereas it is possible that $h(X_{\sf C}^{\Phi})\ne Y_{\sf C}^{\Psi}$.

\begin{proof}[Proof of Theorem \ref{thmB}]
To establish that (i)$\Rightarrow$(iv), assume that $\Phi$ is
$(h,\tau)$-related to $\Psi$. Then $h(X_{\sf S}^{\Phi}) = Y_{\sf
  S}^{\Psi}$, $h(X_{\sf U}^{\Phi}) = Y_{\sf U}^{\Psi}$, hence $\mbox{\rm
  dim}\, X_{\sf S}^{\Phi} = \mbox{\rm dim}\, Y_{\sf S}^{\Psi}$, $\mbox{\rm
  dim}\, X_{\sf U}^{\Phi} = \mbox{\rm dim}\, Y_{\sf U}^{\Psi}$,
and it only remains to prove the assertion regarding $\Phi_{\sf C},
\Psi_{\sf C}$. To this end, in analogy to the proofs in Section \ref{sec3}, denote
$A^{\Phi_{\sf C}}, A^{\Psi_{\sf C}}$ by $A,B$ respectively, and let $X_0 =
\mbox{\rm ker}\, A$, $Y_0 = \mbox{\rm ker}\, B$, as well as $X_s =
\mbox{\rm ker}\, (A^2 + s^2 \, \mbox{\rm id}_{X_{\sf C}^{\Phi}})$, $Y_s =
\mbox{\rm ker}\, (B^2 + s^2 \, \mbox{\rm id}_{Y_{\sf C}^{\Psi}})$ for every $s\in
\R^+$. For $s\ge 0$ and $n\in \N_0$, let $c_n^{\Phi}(s) =
\mbox{\rm dim}\, \bigl( X_s \cap C^{\epsilon (n)} (\Phi, X)\bigr)$.
Recall from Section \ref{sec2} that $\bigl( c_n^{\Phi} (s)\bigr)$ is a
decreasing sequence of integers, with $c_0^{\Phi} (s) = \mbox{\rm dim}\,
X_s$, as well as $c_n^{\Phi}(s) = 0$ for all large $n$. With this, consider
non-negative integers
$d_n^{\Phi} (s):= c_{n-1}^{\Phi} (s) - c_n^{\Phi} (s)$, with any $ n \in \N$.
As a consequence of (\ref{eq3p6}), $d_n^{\Phi}(0)$ 
simply equals the number of blocks $J_n$ in the real Jordan normal form of $A$,
whereas $\frac12 d_n^{\Phi} (s)$ equals, for every $s\in \R^+$, the number of
blocks $\left[ \begin{array}{c|r} J_n & - sI_n \\ \hline  \\[-5mm]  sI_n &
    J_n \end{array} \right]$.
Recall first that $h(X_0) = Y_0$, by Proposition \ref{prop24}, and
that $h \bigl(  C^{\epsilon (n)} (\Phi, X)\bigr) = C^{\epsilon (n)}(\Psi, Y)$
for every $n\in \N_0$, by Lemma \ref{lem35xx}. It follows that
$c_n^{\Phi} (0) = c_n^{\Psi} (0)$ for all $n\in \N_0$, and hence also
$d_n^{\Phi} (0) = d_n^{\Psi} (0)$ for all $n\in \N$. Thus, $A,B$ (and in fact $\alpha B$ for any $\alpha \in \R^+$) contain
the same number (possibly, zero) of blocks $J_n$ in their respective
real Jordan normal forms, for each $n\in \N$. Since this clearly  proves (iv) in case
$\sigma (\Phi)\cap \imath \R \subset \{0\}$, henceforth assume that
$\sigma (\Phi) \cap \imath \R \setminus \{ 0\}\ne \varnothing$.

Pick any $\lambda \in \sigma (\Phi) \cap \imath \R \setminus \{ 0 \}$,
and recall that $X_{\lambda \Q}^{\Phi} \subset \mbox{\rm Bnd}\, \Phi$
as well as $h(\mbox{\rm Bnd}\, \Phi) = \mbox{\rm Bnd}\, \Psi$. Thus
$h(X_{\lambda \Q}^{\Phi}) = Y_{h_{\Q} (\lambda \Q)}^{\Psi}$, by Lemma
\ref{lem4n3}. As in the proof of Lemma \ref{lem4n2}, for convenience
let $\lambda \Q \cap \sigma (\Phi) \setminus \{ 0 \} = \{ \pm \imath
a_1, \ldots , \pm \imath a_m\}$ and $h_{\Q} (\lambda \Q ) \cap \sigma (\Psi) \setminus \{ 0 \} = \{ \pm \imath
b_1, \ldots , \pm \imath b_m\}$, with $m\in \N$ and real
numbers $a_1 > \ldots > a_m>0$ and $b_1 > \ldots > b_m > 0$; again,
$a_0:=b_0:= 0$. As seen in that proof, $a_k = \alpha b_k$ for every
$k=0,1,\ldots, m$, with $\alpha = T_{h_{\Q} (\lambda \Q)}^{\Psi}/ T_{\lambda
  \Q}^{\Phi} \in \R^+$, but also, with the sets $K_{\ell}\subset \N_0$
defined there,
\begin{equation}\label{eq5p101}
h \left( 
X_{a_{\ell + 1}} \oplus \bigoplus\nolimits_{k\in K_{\ell}} X_{k
  a_{\ell + 1}}
\right) = Y_{b_{\ell + 1}} \oplus \bigoplus\nolimits_{k\in K_{\ell}} Y_{k
  b_{\ell + 1}} \quad \forall \ell = 0, 1, \ldots , m-1 \, .
\end{equation}
Now, assume that, for some $0\le \ell < m$,
\begin{equation}\label{eq5p102}
d_n^{\Phi} (a_k) = d_n^{\Psi} (b_k) \quad \forall n\in \N ,  k=0, 1, \ldots,
\ell  \, ;
\end{equation}
as seen earlier, (\ref{eq5p102}) holds for $\ell = 0$. With
(\ref{eq5p101}) and Lemma \ref{lem35xx}, for any $n\in \N_0$,
\begin{align*}
c_n^{\Phi} (a_{\ell + 1}) & + \! \! \sum\nolimits_{k\in
  K_{\ell}} \!\! c_n^{\Phi} (k a_{\ell + 1} ) = \mbox{\rm dim} \left( \left(X_{a_{\ell + 1}} \oplus \bigoplus\nolimits_{k \in K_{\ell}} \!\! X_{k
  a_{\ell + 1}} \right) \cap C^{\epsilon (n)} (\Phi, X )\right)\\
& =\mbox{\rm dim} \left(
\left( Y_{b_{\ell + 1}} \oplus \bigoplus\nolimits_{k \in K_{\ell}} \!\! Y_{k
  b_{\ell + 1}} \right) \cap C^{\epsilon (n)} (\Psi, Y )\right) = 
c_n^{\Psi} (b_{\ell + 1} ) + \! \! \sum\nolimits_{k\in
  K_{\ell}} \!\! c_n^{\Psi} (kb_{\ell + 1} ) \, .
\end{align*}
Together with (\ref{eq5p102}), this implies that $d_n^{\Phi}
(a_{\ell+1}) = d_n^{\Psi} (b_{\ell + 1})$ for every $n\in \N$, i.e., (\ref{eq5p102}) holds
with $\ell + 1$ instead of $\ell$, and by induction (on $\ell$) in fact for $\ell =
m$ as well. Thus, $A, \alpha B$ contain the same number of blocks
$\left[ \begin{array}{c|r} J_n & - a_k I_n \\ \hline  \\[-5mm]  a_k I_n &
    J_n \end{array} \right]$ in their respective real Jordan normal
forms, for each $n\in \N$ and $k=1, \ldots , m$.
The same argument can be applied to every rational class $\lambda \Q$
with $\lambda \in \sigma (\Phi)\cap \imath \R \setminus \{ 0 \}$. By
Lemma \ref{lem4ny}, the resulting value of $\alpha$ is independent of
$\lambda $. Thus $ A^{\Phi_{\sf C}}=A$ and $\alpha A^{\Psi_{\sf C}}= \alpha B$
are similar, as claimed.

Showing that (iv)$\Rightarrow$(iii)$\Rightarrow$(ii) 
requires straightforward, mostly routine arguments. Since details of
the latter can be found in many textbooks, e.g., \cite{Amann, Arnold,
  CK, Irwin, Meiss}, only a brief outline is included here for completeness.
To prove that (iv)$\Rightarrow$(iii), note first that
$\|x\|^{\Phi}_{\sf S} := \int_0^{+\infty} \|\Phi_t P^{\Phi}_{\sf S} x\| \,
{\rm d}t$ and its counterpart $\| \cdot  \|_{\sf S}^{\Psi}$ on $Y$
define norms on $X_{\sf S}^{\Phi}$ and $Y_{\sf S}^{\Psi}$
respectively, for which $ \|\Phi_{\bf \cdot} x\|^{\Phi}_{\sf S}$ and
$\|\Psi_{\bf \cdot} y\|^{\Psi}_{\sf S}$ are strictly decreasing to $0$ as
$t\to +\infty$ whenever
$x\ne 0$, $y\ne 0$. Consequently, given $x\in X_{\sf S}^{\Phi}\setminus
\{0\}$, there exists a unique $t_x \in \R$ with $\| \Phi_{t_x} x
\|_{\sf S} = 1$. Also, by assumption, there exists a linear isomorphism
$H_{\sf S} : X_{\sf S}^{\Phi} \to Y_{\sf S}^{\Psi}$. It is readily
confirmed that $h_{\sf S}: X_{\sf S}^{\Phi} \to Y_{\sf S}^{\Psi}$,
given by
$$
h_{\sf S}(x) = \left\{
\begin{array}{ll}
\displaystyle \frac{\Psi_{-\alpha t_x}   H_{\sf S} \Phi_{t_x} x}{\| H_{\sf S}
  \Phi_{t_x}x\|^{\Phi}_{\sf S}} & \mbox{\rm if } x\in X_{\sf S}^{\Phi}
\setminus \{0 \} \, , \\[2mm]
0 & \mbox{\rm if } x = 0 \, , 
\end{array}
\right.
$$
is a homeomorphism, and
\begin{equation}\label{eq5p103}
h_{\sf S} (\Phi_t P_{\sf S}^{\Phi} x) = \Psi_{\alpha t} h_{\sf S}
(P_{\sf S}^{\Phi} x) \quad \forall (t,x)\in \R \times X \, .
\end{equation}
A completely analogous argument, utilizing $\|x\|^{\Phi}_{\sf U} :=
\int_{-\infty}^{0} \| \Phi_t P_{\sf U}^{\Phi} x\|\, {\rm d}t$, its
counterpart $\| \cdot \|_{\sf U}^{\Psi}$ on $Y$, and a linear isomorphism $H_{\sf U} : X_{\sf
  U}^{\Phi} \to Y_{\sf U}^{\Psi}$, yields a homeomorphism $h_{\sf U}:
X_{\sf U}^{\Phi} \to Y_{\sf U}^{\Psi}$ for which (\ref{eq5p103}) holds
with $\sf U$ instead of $\sf S$. With this, clearly $\Phi_{\sf
  S}\times \Phi_{\sf U}$, $\Psi_{\sf  S}\times \Psi_{\sf U}$ are
$C^0$-flow equivalent via the homeomorphism $h_{\sf S}\times h_{\sf
  U}$ and with $\tau_x = \alpha \, \mbox{\rm id}_{\R}$ for all $x\in X_{\sf
  S}^{\Phi} \times X_{\sf U}^{\Phi}$. Since $HA^{\Phi_{\sf C}} =
\alpha A^{\Psi_{\sf C}} H$ by assumption, $H(\Phi_{\sf C})_t =
(\Psi_{\sf C})_{\alpha t} H$ for all $t\in \R$, that is, $\Phi_{\sf C}$,
$\Psi_{\sf C}$ are linearly flow equivalent.

To prove that (iii)$\Rightarrow$(ii), assume that $\Phi_{\sf
  S}\times \Phi_{\sf U}$, $\Psi_{\sf  S}\times \Psi_{\sf U}$ are
$C^0$-flow equivalent and $H_{\sf C}(\Phi_{\sf C})_t =
(\Psi_{\sf C})_{\alpha t} H_{\sf C}$ for all $t\in \R$, with some linear
isomorphism $H_{\sf C}:  X_{\sf C}^{\Phi} \to Y_{\sf C}^{\Psi}$ and $\alpha \in \R^+$. By the implication (i)$\Rightarrow$(iv)
already proved, $\mbox{\rm dim}\, X_{\sf S}^{\Phi} = \mbox{\rm dim}\, Y_{\sf S}^{\Psi}$, $\mbox{\rm
  dim}\, X_{\sf U}^{\Phi} = \mbox{\rm dim}\, Y_{\sf U}^{\Psi}$, and
the argument used above to prove that (iv)$\Rightarrow$(iii) yields a
homeomorphism $h_{\sf S}: X_{\sf S}^{\Phi} \to Y_{\sf S}^{\Psi}$
satisfying (\ref{eq5p103}), as well as its counterpart $h_{\sf U}:
X_{\sf U}^{\Phi} \to Y_{\sf U}^{\Psi}$. Combining these ingredients, 
$$
h(x) := h_{\sf S} (P_{\sf S}^{\Phi} x) + H_{\sf C} P_{\sf C}^{\Phi} x
+ h_{\sf U} (P_{\sf U}^{\Phi} x) \quad \forall x \in X \, ,
$$
defines a homeomorphism $h: X \to Y$ with $h(\Phi_t x) = \Psi_{\alpha
  t} h(x)$ for all $(t,x)\in \R \times X$. Thus $\Phi, \Psi$ are
$C^0$-flow equivalent.
The implication (ii)$\Rightarrow$(i) is trivial.
\end{proof}

The proof of Theorem \ref{thmA} given below relies on two simple
observations, both of which are straightforward linear algebra
exercises \cite{Wsupp}; recall that $X,Y$ are finite-dimensional
linear spaces over $\R$.

\begin{prop}\label{prop4a1}
Let $A,\widetilde{A}:X\to Y$ be linear, and assume that $Z\ne X$ is a subspace
of $X$ with $Z \supset \mbox{\rm ker}\, A + \mbox{\rm
ker}\, \widetilde{A}$. Then the following are equivalent:
\begin{enumerate}
\item $Ax,\widetilde{A}x$ are linearly dependent for each $x\in X\setminus Z$;
\item $\widetilde{A} = \alpha A$ for some $\alpha \in \R \setminus \{0\}$.
\end{enumerate}
\end{prop}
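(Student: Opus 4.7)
The direction (ii)$\Rightarrow$(i) is immediate, since $Ax$ and $\alpha Ax$ are trivially linearly dependent for every $x$. The real content is (i)$\Rightarrow$(ii), and the plan is to rephrase the dependence condition as the vanishing of a polynomial with values in $\wedge^2 Y$, extend it from $X\setminus Z$ to all of $X$, polarize, and then extract the structure of $\tilde{A}$.

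First I would observe that $Z\ne X$ together with $\ker A +\ker\tilde{A}\subset Z$ forces both $A,\tilde{A}\ne 0$, so one can pick $x_0\in X\setminus Z$ with $Ax_0\ne 0$ and $\tilde{A}x_0\ne 0$; by hypothesis these vectors are linearly dependent, so $\tilde{A}x_0 = \alpha_0 Ax_0$ for a unique $\alpha_0 \in \R\setminus\{0\}$. The hypothesis says $Ax\wedge \tilde{A}x = 0$ on the open dense set $X\setminus Z$; since this expression is a homogeneous quadratic polynomial in $x$ with values in the finite-dimensional space $\wedge^2 Y$, it must vanish on all of $X$. Polarizing yields $Ax\wedge\tilde{A}y + Ay\wedge\tilde{A}x = 0$ for all $x,y\in X$, and specializing $y=x_0$ gives $Ax_0\wedge(\tilde{A}x - \alpha_0 Ax) = 0$ for every $x$. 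Since $Ax_0\ne 0$, there is a (necessarily linear) functional $\beta:X\to\R$ with
\begin{equation*}
\tilde{A}x = \alpha_0 Ax - \beta(x) Ax_0 \qquad \forall x\in X.
\end{equation*}

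The remaining task is to show $\beta\equiv 0$, which I would split into two cases according to the rank of $A$. If $\mbox{rank}\, A\ge 2$, then $A^{-1}(\R Ax_0)$ is a proper subspace of $X$, so on the open dense set $X\setminus\bigl(Z\cup A^{-1}(\R Ax_0)\bigr)$ the vectors $Ax$ and $Ax_0$ are linearly independent. For such $x$ condition (i) still produces $\tilde{A}x = \alpha(x) Ax$, and comparing with the displayed formula forces $\beta(x)=0$; linearity then gives $\beta\equiv 0$ and hence $\tilde{A}=\alpha_0 A$. If instead $\mbox{rank}\, A = 1$, condition (i) becomes vacuous because $\mbox{im}\, A$ is a line; here I would argue directly from the kernel hypothesis: $\ker A$ has codimension $1$ and is contained in the proper subspace $Z$, so $\ker A = Z$; similarly $\ker\tilde{A}\subset\ker A$ is a hyperplane (as $\tilde{A}\ne 0$), forcing $\ker\tilde{A}=\ker A$. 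Writing any $x\in X$ as $x = tx_0 + w$ with $w\in\ker A=\ker\tilde{A}$ then gives $\tilde{A}x = t\tilde{A}x_0 = t\alpha_0 Ax_0 = \alpha_0 Ax$.

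The main obstacle I anticipate is precisely this rank-$1$ case, which is easy to overlook because the wedge-product machinery degenerates there. The crucial point is that the assumption $\ker A+\ker\tilde{A}\subset Z$ (rather than merely $\ker A\subset Z$) is what makes the two hyperplane kernels coincide, which is exactly what is needed to salvage the conclusion when condition (i) itself carries no information.
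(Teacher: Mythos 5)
Your direction (ii) $\Rightarrow$ (i), the passage from (i) on the open dense set $X\setminus Z$ to the polynomial identity $Ax\wedge\widetilde{A}x=0$ on all of $X$, the polarization yielding $\widetilde{A}x=\alpha_0 Ax-\beta(x)Ax_0$ with $\beta$ linear, and the treatment of the case $\mbox{\rm rank}\,A\ge 2$ are all correct.

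There is, however, a genuine gap in the $\mbox{\rm rank}\,A=1$ case. You assert that $\ker\widetilde{A}$ is a hyperplane ``as $\widetilde{A}\ne 0$,'' but $\widetilde{A}\ne 0$ only makes $\ker\widetilde{A}$ a proper subspace, not one of codimension one; a proper subspace of the hyperplane $\ker A$ need not equal $\ker A$. (With $A=\mbox{\rm diag}(1,0)$ and $\widetilde{A}=\mbox{\rm id}$ on $X=Y=\R^2$, $Z=\ker A$, one has $\widetilde{A}\ne 0$ and $\ker\widetilde{A}=\{0\}\subsetneq\ker A$; of course (i) fails here too, which is precisely the information your argument is not invoking.) Relatedly, your remark that condition (i) ``becomes vacuous'' when $\mbox{\rm rank}\,A=1$ is mistaken: it is exactly (i), through the wedge identity, that controls the rank of $\widetilde{A}$, and without it the conclusion is false in the rank-one case. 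The patch is immediate from what you already derived: since $\mbox{\rm im}\,A=\R Ax_0$ here, the identity $\widetilde{A}x=\alpha_0 Ax-\beta(x)Ax_0$ shows $\mbox{\rm im}\,\widetilde{A}\subset\R Ax_0$, hence $\mbox{\rm rank}\,\widetilde{A}=1$ (as $\widetilde{A}\ne 0$) and $\ker\widetilde{A}$ really is a hyperplane, after which the remainder of your rank-one argument closes the case.
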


\begin{prop}\label{prop4a2}
Let $A:X\to X$ be linear. Then the following are equivalent:
\begin{enumerate}
\item $A$ is nilpotent, i.e., $A^n = 0$ for some $n\in \N$;
\item $A,\alpha A$ are similar for every $\alpha \in \R \setminus
  \{0\}$;
\item $A,\alpha A$ are similar for some $\alpha > 1$.
\end{enumerate}
\end{prop}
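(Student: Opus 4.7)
The plan is to establish the cycle (i)$\Rightarrow$(ii)$\Rightarrow$(iii)$\Rightarrow$(i), with (ii)$\Rightarrow$(iii) being immediate. The whole statement is ultimately a routine exercise, and the main substance lies in the two non-trivial implications.

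For (i)$\Rightarrow$(ii), I would work with the real Jordan normal form of $A$. Since $A$ is nilpotent, all its complex eigenvalues equal $0$, so its real Jordan form consists entirely of blocks $J_n$ (for various $n$), and it suffices to check that for each such block, $J_n$ and $\alpha J_n$ are similar for every $\alpha\in \R\setminus \{0\}$. This is precisely what the diagonal matrices $D_n(\alpha)$ introduced in Section \ref{sec2} are designed for: a direct entrywise computation gives $D_n(\alpha) J_n D_n(\alpha)^{-1} = \alpha^{-1} J_n$, equivalently $D_n(\alpha)(\alpha J_n) D_n(\alpha)^{-1} = J_n$. Since $D_n(\alpha)\in \R^{n\times n}$ is invertible for every $\alpha\ne 0$, assembling one such conjugator per block in a block-diagonal matrix $H$ yields $H(\alpha A)H^{-1} = A$, i.e., $A$ and $\alpha A$ are similar over $\R$.

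For (iii)$\Rightarrow$(i), the argument is spectral. If $A$ and $\alpha A$ are similar (over $\R$, hence over $\C$) for some $\alpha > 1$, then they share the same complex spectrum: $\sigma(A) = \sigma(\alpha A) = \alpha\, \sigma(A)$. But $\sigma(A)$ is a finite subset of $\C$, and if it contained any $\lambda \ne 0$ then $\{\alpha^k \lambda : k\in \N_0\}$ would be an infinite subset of $\sigma(A)$ because $\alpha > 1$ makes these powers pairwise distinct, a contradiction. Hence $\sigma(A) \subset \{0\}$, so $A$ is nilpotent (the case $X=\{0\}$ being vacuous).

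The implication (ii)$\Rightarrow$(iii) is immediate by specializing to, say, $\alpha = 2$. There is no real obstacle here: the argument over $\R$ (rather than $\C$) for (i)$\Rightarrow$(ii) requires only the mild observation that negative $\alpha$ causes no issue because $D_n(\alpha)$ remains a real invertible matrix, and (iii)$\Rightarrow$(i) uses nothing beyond the fact that similar matrices share their complex spectrum together with an elementary finiteness argument.
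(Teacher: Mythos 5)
Your proof is correct, and since the paper relegates this proposition to the supplementary material \cite{Wsupp} as a ``straightforward linear algebra exercise,'' there is no printed argument to compare against; what you write is exactly the expected routine proof. The cycle (i)$\Rightarrow$(ii)$\Rightarrow$(iii)$\Rightarrow$(i) is the right structure, the identity $D_n(\alpha) J_n D_n(\alpha)^{-1} = \alpha^{-1} J_n$ (equivalently $D_n(\alpha)(\alpha J_n)D_n(\alpha)^{-1} = J_n$) is a pleasing way to exploit the $D_m$ matrices the paper has already introduced, and the finiteness-of-spectrum argument for (iii)$\Rightarrow$(i) is standard and correct (over $\R$ or $\C$; real similarity implies complex similarity, and $\sigma(\alpha A) = \alpha\,\sigma(A)$, so any nonzero eigenvalue would generate an infinite orbit under multiplication by $\alpha > 1$). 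One pedantic point: the conjugator taking $\alpha A$ to $A$ is not itself block-diagonal but rather $P^{-1} D(\alpha) P$, where $P$ puts $A$ into Jordan form and $D(\alpha)$ is block-diagonal; more cleanly, you can just invoke transitivity of similarity via $A \sim J \sim \alpha J \sim \alpha A$. It is also worth noting, as the paper's Remark does, that your (iii)$\Rightarrow$(i) argument is where finite-dimensionality is essential, consistent with the paper's observation that this implication fails for $\mbox{\rm dim}\,X = \infty$.
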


\begin{rem}
While the non-trivial implication (i)$\Rightarrow$(ii) in Proposition
\ref{prop4a1} may fail if $Z \not \supset \mbox{\rm ker}\, A + \mbox{\rm
ker}\, \widetilde{A}$, even when $\mbox{\rm dim}\, X = 1$, finite-dimensionality of
$X$ (or $Y$) is irrelevant for the result. By contrast, although
(i)$\Rightarrow$(ii)$\Rightarrow$(iii) remains valid in Proposition
\ref{prop4a2} when $\mbox{\rm dim}\, X = \infty$, every other implication
may fail in this case. Provided that $\R$ is replaced with $\C$ in (ii), Propositions \ref{prop4a1}
and \ref{prop4a2} also hold when $X,Y$ are
linear spaces over $\C$.
\end{rem}

\begin{proof}[Proof of Theorem \ref{thmA}]
Clearly (iv)$\Rightarrow$(iii)$\Rightarrow$(ii)$\Rightarrow$(i), so only the
implication (i)$\Rightarrow$(iv) requires proof.
To prepare for the argument, assume $\Phi$ is $(h,\tau)$-related to
$\Psi$ with a $C^1$-diffeomorphism $h:X\to Y$. For convenience, denote the linear
isomorphism $D_0h$ by $H$, the generators $A^{\Phi}, A^{\Psi}$ by
$A, B$, and the projections $P_{\bullet}^{\Phi},
P_{\bullet}^{\Psi}$ by $P_{\bullet}, Q_{\bullet}$, respectively. As seen earlier,
$h(X_{\sf S}^{\Phi}) = Y_{\sf S}^{\Psi}$ and hence $HX_{\sf S}^{\Phi}
= Y_{\sf S}^{\Psi}$, and similarly for $X_{\sf U}^{\Phi}$. It is possible, however, that $HX_{\sf C}^{\Phi} \ne
Y_{\sf C}^{\Psi}$, and this in turn necessitates usage of one additional pair of invariant
subspaces as follows: Recall that $X_{\sf C}^{\Phi} \supset \mbox{\rm Bnd}\, \Phi \supset \mbox{\rm ker} \, A$ and $Y_{\sf C}^{\Psi} \supset
\mbox{\rm Bnd}\, \Psi \supset \mbox{\rm ker} \, B$. By Proposition \ref{prop24},
$h(\mbox{\rm Bnd}\, \Phi) = \mbox{\rm Bnd}\, \Psi$, and hence
$H\, \mbox{\rm Bnd}\, \Phi = \mbox{\rm Bnd}\, \Psi$,
but also $A \, \mbox{\rm Bnd}\, \Phi \subset \mbox{\rm Bnd}\, \Phi $
and $B\, \mbox{\rm Bnd}\, \Psi \subset
\mbox{\rm Bnd}\, \Psi $, due to invariance. With this, let $X_{\sf HB} =
X_{\sf S}^{\Phi} \oplus \mbox{\rm Bnd}\, \Phi \oplus X_{\sf U}^{\Phi}$
and $Y_{\sf HB} =
Y_{\sf S}^{\Psi} \oplus \mbox{\rm Bnd}\, \Psi \oplus Y_{\sf
  U}^{\Psi}$. Plainly, $HX_{\sf HB} =
Y_{\sf HB}$, and crucially,
$$
Q_{\bullet} H x = H P_{\bullet} x \quad \forall x\in X_{\sf HB} , \,
{\bullet} = {\sf S}, {\sf C}, {\sf {U} }\, .
$$
By Theorem \ref{thmB}, there is nothing to
prove if $X_{\sf C}^{\Phi} = X$, or equivalently if $X_{\sf HB} \setminus
X_{\sf C}^{\Phi} = \varnothing$. Thus, henceforth assume that $X_{\sf HB} \setminus
X_{\sf C}^{\Phi} \ne  \varnothing$; notice that this in particular includes
the possibility of $X_{\sf C}^{\Phi} =\{0\}$, i.e., the case of a hyperbolic
flow $\Phi$.

With the notations introduced above, pick any $x\in X_{\sf HB} \setminus
X_{\sf C}^{\Phi}$ and $t\in \R^+$. Note that if $\tau$ was
differentiable, then differentiating the
identity $h(e^{tA} x) = e^{\tau_x (t) B} h(x)$ at $(0,0)$
would immediately yield $HA = \tau_0'(0) BH$; cf.\
\cite[p.233]{Wiggins}. The following argument mimics this process of
differentiation for arbitrary $\tau$. First observe that, for every $\varepsilon > 0$,
\begin{equation}\label{eq4t2p2}
h(e^{tA} \varepsilon x)/\varepsilon = e^{\tau_{\varepsilon x} (t) B}
h(\varepsilon x)/\varepsilon \, .
\end{equation}
Suppose that $\lim_{\varepsilon \downarrow 0} \tau_{\varepsilon x} (t)
= +\infty$. If so, $\lim_{n\to \infty} \tau_{\varepsilon_n x} (t) =
+\infty$ for every strictly decreasing sequence $(\varepsilon_n)$ with
$\lim_{n\to \infty} \varepsilon_n = 0$. In this case, applying $Q_{\sf S}$
to (\ref{eq4t2p2}) yields
$$
H e^{tA} P_{\sf S} x = Q_{\sf S} H e^{tA}x = \lim\nolimits_{n\to \infty}
e^{\tau_{\varepsilon_n x} (t) B} Q_{\sf S} h(\varepsilon_n x)/\varepsilon_n = 0 \, ,
$$
and hence $P_{\sf S} x = 0$, whereas applying $Q_{\sf U}$ yields
$$
0 = \lim\nolimits_{n\to \infty} e^{- \tau_{\varepsilon_n x} (t) B} Q_{\sf U}
h(e^{tA} \varepsilon_n x)/\varepsilon_n = Q_{\sf U} H x = H P_{\sf U} x \, ,
$$
and hence $P_{\sf U} x = 0$. Taken together, 
$x\in \mbox{\rm ker}\, (P_{\sf S} + P_{\sf U}) = X_{\sf C}^{\Phi}$,
contradicting the fact that $x\in
X_{\sf HB} \setminus X_{\sf C}^{\Phi}$. Consequently, $\rho_0 (t,x) :=
\liminf_{\varepsilon\downarrow
0} \tau_{\varepsilon x}(t) < +\infty$ and 
\begin{equation}\label{eq4t2p3}
He^{tA} x = e^{\rho_0 (t,x)B} Hx \, .
\end{equation}
Since $\rho_0 (t,x) = 0$ would imply $x\in \mbox{\rm Per}\, \Phi
\subset X_{\sf C}^{\Phi}$, clearly $\rho_0 (t,x)\in \R^+$. Also, notice that if
$\limsup_{t\downarrow 0} \rho_0 (t,x)$ was positive, possibly $+\infty$,
then $P_{\sf S} x = 0$ and $P_{\sf U} x = 0$ would follow from applying $Q_{\sf S}$ and
$Q_{\sf U}$ respectively to (\ref{eq4t2p3}), again contradicting the
fact that $x\in X_{\sf HB}
\setminus X_{\sf C}^{\Phi}$. Thus $\lim_{t\downarrow 0} \rho_0 (t,x) = 0$.

Next, deduce from (\ref{eq4t2p3}) that
\begin{equation}\label{eq4t2p4}
HAx = \lim\nolimits_{t\downarrow 0} H \frac{e^{tA} - \mbox{\rm id}_X}{t} x =
\lim\nolimits_{t\downarrow 0} \frac{\rho_0 (t,x)}{t} \cdot
\frac{e^{\rho_0 (t,x) B} - \mbox{\rm id}_Y}{\rho_0 (t,x)} Hx =
\lim\nolimits_{t\downarrow 0} \frac{\rho_0 (t,x)}{t}  BHx  \, ,
\end{equation}
and so $\rho_{0,0} (x):= \lim_{t \downarrow 0} \rho_0 (t,x)/t$ exists
because $BHx \ne 0$. Clearly, $\rho_{0,0} (x)\ge 0$. In summary, for every $x\in X_{\sf HB} \setminus X_{\sf C}^{\Phi}$ there exists
$\rho_{0,0} (x)\ge 0$ such that $HA x = \rho_{0,0} (x) BH x$; in
particular, $BHx, HA$ are linearly dependent for each $x \in
X_{\sf HB} \setminus X_{\sf C}^{\Phi}$.
Notice that $\mbox{\rm ker}\, BH = H^{-1} \mbox{\rm ker}\, B \subset
H^{-1} \mbox{\rm Bnd}\, \Psi = \mbox{\rm Bnd}\, \Phi$, as well as $\mbox{\rm ker}\, HA = \mbox{\rm
  ker}\, A \subset \mbox{\rm Bnd}\, \Phi$, and hence $\mbox{\rm ker}\, BH +
\mbox{\rm ker}\, HA \subset \mbox{\rm Bnd}\, \Phi \ne X_{\sf HB}$. Proposition
\ref{prop4a1}, applied to $BH,HA:X_{\sf HB} \to Y_{\sf HB}$ and $Z =
\mbox{\rm Bnd}\, \Phi  = X_{\sf HB} \cap X_{\sf C}^{\Phi} $, guarantees the existence of $\alpha \in \R \setminus
\{0\}$ such that 
\begin{equation}\label{eq4t2p5}
HAx = \alpha BHx \quad \forall x \in X_{\sf HB} \, ,
\end{equation}
and from (\ref{eq4t2p4}) it is clear that in fact $\alpha \in \R^{+}$. Thus
the proof is complete in case $X_{\sf HB} = X$, or equivalently whenever
$\mbox{\rm Bnd}\, \Phi = X_{\sf C}^{\Phi}$. (This, for instance, includes the case of
a hyperbolic flow $\Phi$.)

It remains to consider the case of $\mbox{\rm Bnd}\, \Phi$ being a proper
subspace of $X_{\sf C}^{\Phi}$, where necessarily $\mbox{\rm Bnd}\,
\Phi \ne \{0\}$. Deduce from Theorem \ref{thmB} that there exists a
linear isomorphism $K:X\to Y$, with $KX_{\bullet}^{\Phi} =
Y_{\bullet}^{\Psi}$ for each ${\bullet}
= {\sf S}, {\sf C}, {\sf U}$, and a $\beta \in \R^+$ such that
\begin{equation}\label{eq4t2p6}
K Ax = \beta B K x \quad \forall x \in X_{\sf C}^{\Phi} \, .
\end{equation}
Notice that (\ref{eq4t2p6}) implies $K \, \mbox{\rm Bnd}\, \Phi =
\mbox{\rm Bnd}\, \Psi$. Combine (\ref{eq4t2p5}) and (\ref{eq4t2p6}) to obtain
\begin{equation}\label{eq4t2p7}
\alpha H^{-1} BH x = \beta K^{-1} B K x \quad \forall x \in \mbox{\rm
  Bnd}\, \Phi \, .
\end{equation}
For convenience, denote the generators of $\Psi_{\mbox{{\scriptsize \rm Bnd}}\Psi}$ and $\Psi_{Y_{\sf C}^{\Psi}}$
by $B_{\sf B}$ and $B_{\sf C}$ respectively. Since $H \, \mbox{\rm
  Bnd}\, \Phi = \mbox{\rm Bnd}\, \Psi = K \, \mbox{\rm Bnd}\, \Phi$,
(\ref{eq4t2p7}) simply asserts that $\alpha B_{\sf B}$, $\beta B_{\sf B}$ are
similar. It is now helpful to distinguish two cases: On the one hand,
if $B_{\sf B}$ is not nilpotent, then $\alpha = \beta$ by Proposition
\ref{prop4a2}. In this case, $L: X \to Y$ with
\begin{equation}\label{eq4t2p8}
L = H P_{\sf S}  + KP_{\sf C} + HP_{\sf U}
\end{equation}
is a linear isomorphism, and $LA x = \alpha BLx$ for all $x\in X$. On
the other hand, if $B_{\sf B}$ is nilpotent then so is $B_{\sf C}$, and
Proposition \ref{prop4a2} shows that $\alpha B_{\sf C}$, $\beta B_{\sf C}$ are
similar. Consequently, there exists a linear isomorphism
$\widetilde{K}: X \to Y$, with $\widetilde{K} X_{\bullet}^{\Phi} = Y_{\bullet}^{\Phi}$
for each ${\bullet}={\sf S}, {\sf C}, {\sf U}$, such that
$\widetilde{K} Ax = \alpha B \widetilde{K} x$ for all $x \in X_{\sf C}^{\Phi}$.
The same argument as in the non-nilpotent case then applies, with
$\widetilde{K}$ in place of $K$ in (\ref{eq4t2p8}). In either case,
therefore, $LA = \alpha BL$, that is, $A^{\Phi}=A$ and $\alpha A^{\Psi}=\alpha B$ are similar, and the proof is complete.
\end{proof}

With the main results established, the remainder of this
section provides a brief discussion relating them to the existing literature.

In the case of hyperbolic flows, Theorem \ref{thmB} is classical
\cite{Amann, Arnold, CK, Irwin, Meiss}. What makes the result more challenging in general, then, is the
presence of a non-trivial central space. On this matter, two
key references are \cite{Kuiper, Ladis}. In \cite{Kuiper}, the equivalence (ii)$\Leftrightarrow$(iv) of
Theorem \ref{thmB} is proved utilizing a version of flow
equivalence (termed {\bf homeomorphy}, also allowing for negative
$\alpha$ in (iv), that is, for time-reversal). To put this in perspective,
notice that insisting on {\em flow\/} (rather than mere {\em orbit\/})
equivalence greatly simplifies the arguments in the present
article as well. For instance, Proposition \ref{prop25}(i) simply reads $T_{h(x)}^{\psi} = \alpha
T_x^{\varphi}$ in this case, and Lemma \ref{lem4ny}
(the proof of which required considerable effort) trivially holds. 
Consequently, to decide whether two bounded real linear flows
are $C^0$-flow equivalent, all that is needed is an elementary
analysis of periodic points, as developed in Section \ref{sec3}. In
particular, one may bypass the topological considerations
of \cite[\S3-4]{Kuiper} which the authors found
unduly hard to grasp. To deal with
non-semisimple eigenvalues on $\imath \R$, \cite[\S5]{Kuiper}
introduces a proximality relation $\Re_{\varphi}$: Specifically,
$x\Re_{\varphi} \widetilde{x}$ if, given any neighbourhoods $U,\widetilde{U}$ of $x,\widetilde{x}\in X$
respectively, there exists a $v \in X$ such that
$\varphi(\R, v)\cap U \ne \varnothing$ and $\varphi(\R,
v)\cap \widetilde{U}  \ne \varnothing$. Plainly, $\Re_{\varphi}$ is
reflexive and symmetric, but not, in general, transitive, and
if $\varphi$ is $(h,\tau)$-related to $\psi$, then $x\Re_{\varphi}
\widetilde{x}$ is equivalent to $h(x) \Re_{\psi} h(\widetilde{x} )$. 
Moreover, if $x\in C_{0} (\varphi, X)$, then $x\Re_{\varphi} 0$, and for irreducible linear flows the
converse is true also. While the usage of $\Re_{\varphi}$ in \cite{Kuiper}
thus resembles the usage of $C_0$ (and $C$) in the present article, recall from
Section \ref{secorb} that these non-uniform cores may be ill-behaved under
products --- and so may be $\Re_{\varphi}$. In fact, as per Example
\ref{ex25} with $u, \widetilde{u}$ as in (\ref{eqex1d}), it is
readily seen that $u \Re_{\varphi}0$ and $\widetilde{u}
\Re_{\varphi}0$, yet $(u,\widetilde{u})\cancel{\Re_{\varphi \times
  \varphi}} (0,0)$. Good behaviour of $\Re_{\varphi}$ under products,
which even for linear flows may or may not occur in general, appears to have been
taken for granted throughout \cite{Kuiper} without proper justification. For
comparison, recall from Section \ref{secorb} that using {\em
  uniform\/} cores allows one to avoid this difficulty altogether; see also \cite{He, WMSc}.

The focus in \cite{Ladis} is on $C^0$-orbit equivalence for linear
flows, real or complex, for which (i)$\Leftrightarrow$(iv) of Theorem
\ref{thmB} and, in essence, a version of Theorem
\ref{thm6n1} below are established. In the process, the following
terminology is employed (cf.\ also \cite[sec.II.4]{BS}): For every $x\in X$, consider the $\varphi$-invariant
closed sets
$$
D^-_{\varphi} (x) = \bigcap\nolimits_{t, \varepsilon \in \R^+}
\overline{\varphi \bigl(]-\infty, - t] , B_{\varepsilon} (x)\bigr)} \,
, \quad
D^+_{\varphi} (x) = \bigcap\nolimits_{t, \varepsilon \in \R^+}
\overline{\varphi \bigl([t, +\infty[  , B_{\varepsilon} (x)\bigr)} \, ,
$$
where $B_{\varepsilon} (x) $ denotes the open $\varepsilon$-ball
centered at $x$. With this, $D_{\varphi} (x) := D^-_{\varphi} (x)\cap
D^+_{\varphi} (x)$ and $S_{\varphi}:= \{x\in X : D_{\varphi}^- (x) \ne
\varnothing , D^+_{\varphi} (x) \ne \varnothing \}$ are called the $\varphi$-{\bf
  prolongation} of $x$ and the $\varphi$-{\bf separatrix},
respectively. Note that, in the parlance of Section \ref{secorb},
simply $D_{\varphi} (x) = C_{x,x} (\varphi, X)$ and $S_{\varphi} =
C(\varphi, X)$. A crucial lemma \cite[Lem.7]{Ladis} asserts that these sets are
well-behaved under products, in that, for instance, $S_{\varphi \times
\psi} = S_{\varphi} \times S_{\psi}$. As demonstrated by Example
\ref{ex25}, this is incorrect in general. Another crucial lemma
\cite[Lem.8]{Ladis} asserts that prolongations and separatrices are
well-behaved under orbit equivalence. Although this assertion is correct
(and a special case of Lemma \ref{prop27}), its proof in
\cite{Ladis} assumes $\tau : \R \times X \to \R$ in (\ref{eq1}) to be
continuous. The reader will have no difficulty constructing
examples of $C^0$-orbit equivalent flows on $X = \R^2$ for which $\tau$ is
not even measurable, let alone continuous. Sometimes $\tau$ can be
replaced by a continuous modification, but simple examples
show that this may not always be the case. Obviously, by Theorem
\ref{thmB}, a continuous modification of $\tau$ always exists between {\em
linear\/} flows, but surely this should be a consequence, rather than
an assumption, of any topological
classification theorem --- as it is in the present article, where
no regularity whatsoever is assumed for $\tau$ beyond the requirement
that $\tau_x$ be strictly increasing for each $x\in X$. One
observation regarding a counterpart of Lemma \ref{lem4ny} is
worth mentioning also: \cite[Prop.3]{Ladis} implicitly
assumes that no more than two different rational classes have to be
considered simultaneously. In the notation of the proof of Lemma
\ref{lem4ny}, this amounts to assuming that $X_{1,2} = X_{\lambda_1
  \Q}^{\Phi} \oplus X_{\lambda_2 \Q}^{\Phi}$. As the reader may want
to check, this drastically simplifies the
proof of that lemma, since Step II and much of Step IV become obsolete. In general,
however, such an assumption is unfounded, as it is quite possible for
three or more rational classes to be rationally dependent, and hence
for $X_{1,2}$ to be strictly larger than $ X_{\lambda_1   \Q}^{\Phi} \oplus
X_{\lambda_2 \Q}^{\Phi}$.

As far as the smooth classification of linear flows is concerned, most
textbooks mention the special case (ii)$\Leftrightarrow$(iv) of
Theorem \ref{thmA} which, of course, can be established immediately by differentiating $h(e^{tA} x) = e^{\alpha t B}
h(x)$ w.r.t.\ $x$ and $t$; see, e.g., \cite{Amann, CK, Meiss, Perko}. However, if one only assumes $C^1$-{\em orbit\/} equivalence,
where $\tau$ may depend on $x$ in a potentially very rough way, 
differentiation clearly is not available, and a finer analysis is needed.
A substantial literature exists of further classification results for linear flows (considering, e.g., Lipschitz \cite{KS} and H\"older
\cite{MM} equivalence) as well as non-autonomous \cite{Cong} and
control systems \cite{ACK2,LZ, Willems}, and also for non-linear flows derived from
them \cite{AK, KRS}.

Finally, it is worth pointing out that a similar classification
problem presents itself in discrete time, i.e., for linear operators
$A:X\to X$, $B:Y\to Y$ which are $C^{\ell}$-{\bf equivalent} if $h(Ax) =
Bh(x)$ for all $x\in X$. While for $\ell \ge 1$ this problem is 
easier than its continuous-time analogue, for $\ell = 0$ it is
significantly more difficult and, to some extent, still unresolved; see, e.g., \cite{CappSh, CSSW, Cruz, HP,
KR} and the references therein for the long history of the problem and its many ramifications.

\section{Equivalence of complex linear flows}\label{secom}

So far, the classification of finite-dimensional
linear flows developed in this article has focussed
entirely on {\em real\/} flows. Such focus is warranted by the fact that the main result, Theorem \ref{thmB}, is a
truly real theorem, whereas Theorem \ref{thmA} carries over verbatim
to {\em complex\/} flows. The goal of this concluding section is to
make these two assertions precise, via Theorems
\ref{thm6n1} and \ref{thm6n2} below.

Throughout, let $X$ be a finite-dimensional normed space
over $\K = \R$ or $\K = \C$; to avoid notational conflicts with
previous sections, the field of scalars is indicated explicitly
wherever appropriate. Further, let $X_{\R}$ be the {\bf realification}
of $X$, i.e., the linear space $X_{\R}$ equals $X$ as a set, but with the field of scalars being $\R$, and
define $\iota_X : X \to X_{\R}$ as $\iota_X (x) = x$. Thus, if $\K =
\C$, then $\iota_X$ is a homeomorphism as well as an $\R$-linear
bijection, and $\mbox{\rm dim}\,  X_{\R} = 2 \, \mbox{\rm dim}\, 
X$. (Trivially, if $\K = \R$ then $X_{\R}$ equals $X$ as a linear space, and
$\iota_X =\mbox{\rm id}_X$.) Every map $h:X\to Y$ induces a
map $h_{\R}  = \iota_Y \circ h \circ \iota_X^{-1}:X_{\R}
\to Y_{\R}$ which is continuous (one-to-one, onto) if and only
if $h$ is. If $h$ is $C^{\ell}$ or linear then so is $h_{\R}$,
but the converse is not true in general when $\K = \C$. In particular,
an $\R$-linear map $h:X\to Y$ is $\C$-linear precisely if $h_{\R}
J_X = J_Y h_{\R}$ where $J_X : X_{\R}\to X_{\R}$ is the unique
linear operator with $J_X (\, \cdot \, ) = \iota_X \bigl( \imath \, 
\iota_X^{-1} (\, \cdot \, )\bigr)$. Given any (smooth) flow $\varphi$ on $X$, its
realification $\varphi_{\R}$ on $X_{\R}$ is defined via $(\varphi_{\R})_t =
(\varphi_t)_{\R}$ for all $t\in \R$. Clearly, if $\varphi, \psi$ are
$C^{\ell}$-orbit (or -flow) equivalent then so are $\varphi_{\R},
\psi_{\R}$, and for $\ell = 0$ the converse also holds. For a $\K$-linear flow $\Phi$ on $X$, it is
readily confirmed that all fundamental dynamical objects associated
with $\Phi$ are well-behaved under
realification in that, for instance, $A^{\Phi_{\R}} = A^{\Phi}_{\R}$
and also $X_{\R \bullet}^{\Phi_{\R}} = \iota_X (X_{\bullet}^{\Phi})$ for $\bullet =
{\sf S}, {\sf C}, {\sf U}$.  With this, the topological classification
theorem for $\K$-linear flows, a generalization and immediate
consequence of Theorem \ref{thmB}, presents itself as a truly {\em real\/} result in that topological equivalence is determined completely by
the associated realifications. (The reader familiar with \cite{Ladis}
will notice how usage of realifications avoids the somewhat cumbersome
notion of $c$-{\em analog}.)

\begin{theorem}\label{thm6n1}
Let $\Phi, \Psi$ be $\K$-linear flows on $X, Y$,
respectively. Then each of the following five statements implies the other four:
\begin{enumerate}
\item $\Phi, \Psi$ are $C^0$-orbit equivalent;
\item $\Phi, \Psi$ are $C^0$-flow equivalent;
\item $\Phi_{\R}, \Psi_{\R}$ are $C^0$-orbit equivalent;
\item $\Phi_{\R}, \Psi_{\R}$ are $C^0$-flow equivalent;
\item $\mbox{\rm dim}\, X_{\sf S }^{\Phi} = \mbox{\rm dim}\,
  Y_{\sf S}^{\Psi}$, $\mbox{\rm dim}\, X_{\sf U}^{\Phi} = \mbox{\rm dim}\,
  Y_{\sf U}^{\Psi}$, and $A_{\R}^{\Phi_{\sf C}}, \alpha A^{\Psi_{\sf
      C}}_{\R}$ are $\R$-similar for some $\alpha \in \R^+$.
\end{enumerate}
\end{theorem}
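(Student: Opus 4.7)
The plan is to derive Theorem~\ref{thm6n1} directly from Theorem~\ref{thmB} applied to the realified flows $\Phi_{\R}, \Psi_{\R}$, using the fact that topological equivalence and all the invariants appearing in~(v) are unaffected (or only trivially affected) by realification.

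First I would establish the horizontal equivalences (i)$\Leftrightarrow$(iii) and (ii)$\Leftrightarrow$(iv). For $\K = \R$, there is nothing to show since $\iota_X = \mbox{\rm id}_X$ and $\Phi_{\R} = \Phi$. For $\K = \C$, the forward direction is immediate: if $h$ realizes a $C^0$-orbit (or flow) equivalence of $\Phi, \Psi$, then $h_{\R} := \iota_Y\circ h\circ \iota_X^{-1}$ realizes the corresponding equivalence of $\Phi_{\R}, \Psi_{\R}$ with the same $\tau$. The converse uses that $\iota_X, \iota_Y$ are themselves homeomorphisms, so any homeomorphism $\widetilde{h}:X_{\R}\to Y_{\R}$ pulls back to a homeomorphism $\iota_Y^{-1}\circ \widetilde{h}\circ \iota_X:X\to Y$; since the time-$t$ maps of $\Phi$ and $\Phi_{\R}$ agree as set maps, the intertwining relation~(\ref{eq1}) transfers verbatim. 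Crucially, no $\C$-linearity is imposed anywhere in this step, which is what makes the converse work at $\ell = 0$.

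Next I would apply Theorem~\ref{thmB} to the $\R$-linear flows $\Phi_{\R}, \Psi_{\R}$, which at once yields that (iii) and (iv) are both equivalent to the conjunction of $\mbox{\rm dim}\, X_{\R\, {\sf S}}^{\Phi_{\R}} = \mbox{\rm dim}\, Y_{\R\, {\sf S}}^{\Psi_{\R}}$, $\mbox{\rm dim}\, X_{\R\, {\sf U}}^{\Phi_{\R}} = \mbox{\rm dim}\, Y_{\R\, {\sf U}}^{\Psi_{\R}}$, and $\R$-similarity of $A^{\Phi_{\R\, {\sf C}}}$ and $\alpha A^{\Psi_{\R\, {\sf C}}}$ for some $\alpha \in \R^+$. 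To match this with~(v), I would invoke the bookkeeping already recorded in the text: $A^{\Phi_{\R}} = A^{\Phi}_{\R}$ and $X_{\R\, \bullet}^{\Phi_{\R}} = \iota_X(X_{\bullet}^{\Phi})$ for $\bullet = {\sf S}, {\sf C}, {\sf U}$, and analogously for $\Psi$. The first identity restricts to $A^{\Phi_{\R\, {\sf C}}} = A^{\Phi_{\sf C}}_{\R}$, which makes the similarity statement coincide with the one in~(v). The second identity gives $\mbox{\rm dim}\, X_{\R\, \bullet}^{\Phi_{\R}} = d_{\K}\cdot \mbox{\rm dim}\, X_{\bullet}^{\Phi}$ with $d_{\K}\in \{1,2\}$; since $\Phi$ and $\Psi$ share the same base field, the factor $d_{\K}$ cancels, and $\R$-dimensional equality for the realifications is equivalent to the $\K$-dimensional equality asserted in~(v).

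I do not expect any substantive obstacle, as Theorem~\ref{thmB} carries the full analytic weight and the remaining work is essentially a bookkeeping translation between $\Phi$ and $\Phi_{\R}$. The one point I would flag for the reader is the case $\K = \C$: one might naively hope for $\C$-similarity of $A^{\Phi_{\sf C}}, \alpha A^{\Psi_{\sf C}}$ in~(v), but this would be strictly stronger than what is available, as a complex-conjugate pair of eigenvalues of $A^{\Phi_{\sf C}}$ realifies identically regardless of which complex eigenvalue of the pair one selects, yet the two resulting $\C$-linear operators need not be $\C$-similar. The formulation in~(v) therefore wisely asks only for $\R$-similarity of the realifications, which is exactly the invariant produced by the reduction; this is the essential content of the theorem, namely that the topological classification of $\C$-linear flows is a genuinely real problem.
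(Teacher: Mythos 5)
Your argument is correct and coincides with the paper's own proof: both establish the horizontal equivalences (i)$\Leftrightarrow$(iii) and (ii)$\Leftrightarrow$(iv) from the observation that $h$ is a homeomorphism exactly when $h_{\R}$ is, and then invoke Theorem~\ref{thmB} for the $\R$-linear flows $\Phi_{\R}, \Psi_{\R}$, translating the resulting condition back to (v) via the identities $A^{\Phi_{\R}} = A^{\Phi}_{\R}$ and $X_{\R\,\bullet}^{\Phi_{\R}} = \iota_X(X_{\bullet}^{\Phi})$. Your write-up is simply more explicit about the dimension bookkeeping, and the closing remark about why $\R$-similarity rather than $\C$-similarity is the right invariant correctly captures the point the paper makes by example at the start of Section~\ref{secom}.
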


\begin{proof}
For $\K = \R$, this is part of Theorem \ref{thmB}, so assume $\K =
\C$. Since $h_{\R}:X_{\R} \to Y_{\R}$ is a homeomorphism if and only if
$h:X\to Y$ is, clearly (i)$\Leftrightarrow$(iii) and (ii)$\Leftrightarrow$(iv). By Theorem
\ref{thmB}, (iii)$\Leftrightarrow$(iv)$\Leftrightarrow$(v).
\end{proof}

By contrast, smooth equivalence of $\C$-linear flows is {\em
  not\/} determined by the associated realifications. To appreciate this
basic difference, consider the $\C$-linear flows $\Phi, \Psi$
generated by $[\, \imath \,], [-\imath]$, respectively: While
$[\, \imath\, ]_{\R}, [-\imath]_{\R} $ are $\R$-similar, and hence
$\Phi_{\R}, \Psi_{\R}$ are $C^1$- (in fact, linearly) flow
equivalent, $[\, \imath\, ],
\alpha [-\imath]$ are not $\C$-similar for any $\alpha \in \R^+$, and
correspondingly $\Phi, \Psi$ are not $C^1$-orbit equivalent --- though,
of course, they are $C^0$-flow equivalent by Theorem \ref{thm6n1}. The
following generalization of Theorem \ref{thmA} shows that, just as in
this simple example, smooth equivalence always is determined by the
$\K$-similarity of generators (and not by the $\R$-similarity of realified generators).

\begin{theorem}\label{thm6n2}
Let $\Phi, \Psi$ be $\K$-linear flows. Then each of the following four statements implies the other three:
\begin{enumerate}
\item $\Phi, \Psi$ are $C^1$-orbit equivalent;
\item $\Phi, \Psi$ are $C^1$-flow equivalent;
\item $\Phi, \Psi$ are $\K$-linearly flow equivalent;
\item $A^{\Phi}, \alpha  A^{\Psi}$ are $\K$-similar for some $\alpha \in \R^+$.
\end{enumerate}
\end{theorem}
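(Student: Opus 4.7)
For both $\K = \R$ and $\K = \C$, the implications (iv) $\Rightarrow$ (iii) $\Rightarrow$ (ii) $\Rightarrow$ (i) are routine, and for $\K = \R$ the implication (i) $\Rightarrow$ (iv) is precisely Theorem \ref{thmA}. The content of the theorem therefore lies entirely in (i) $\Rightarrow$ (iv) for $\K = \C$. By the conventions of Section \ref{secom}, a $C^1$-diffeomorphism $h: X \to Y$ between $\C$-linear spaces is $\C$-differentiable (holomorphic) with nowhere-vanishing derivative; arranging $h(0) = 0$ via the $\C$-analogue of Proposition \ref{prop30a}, the derivative $H := D_0 h$ is a $\C$-linear isomorphism. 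The plan is to replay the proof of Theorem \ref{thmA} with $H$ now $\C$-linear and all algebraic objects interpreted over $\C$.

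Following Theorem \ref{thmA} line by line, the projections $P_{\bullet}, Q_{\bullet}$, the subspaces $X^{\Phi}_{\sf S}, X^{\Phi}_{\sf C}, X^{\Phi}_{\sf U}, \mbox{\rm Bnd}\,\Phi$, and $X_{\sf HB} := X^{\Phi}_{\sf S} \oplus \mbox{\rm Bnd}\,\Phi \oplus X^{\Phi}_{\sf U}$ are all $\C$-linear, the intertwining $Q_{\bullet}Hx = HP_{\bullet}x$ on $X_{\sf HB}$ is unchanged, and the pointwise analysis using $\rho_0(t,x) := \liminf_{\varepsilon \downarrow 0}\tau_{\varepsilon x}(t)$ and $\rho_{0,0}(x) := \lim_{t \downarrow 0}\rho_0(t,x)/t$ yields $HAx = \rho_{0,0}(x)BHx$ with $\rho_{0,0}(x)\in \R^+$, for every $x \in X_{\sf HB} \setminus X^{\Phi}_{\sf C}$ (where $A := A^{\Phi}$, $B := A^{\Psi}$). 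Proposition \ref{prop4a1} applied over $\C$ (see the remark after it), with $Z = \mbox{\rm Bnd}\,\Phi$, then produces a unique $\alpha \in \R^+$ with $HA = \alpha BH$ on $X_{\sf HB}$. This is $\C$-similarity of the restrictions of $A$ and $\alpha B$ to $X_{\sf HB}$ and $Y_{\sf HB}$, and it completes the proof whenever $X_{\sf HB} = X$ (e.g., for hyperbolic $\Phi$ or whenever $\mbox{\rm Bnd}\,\Phi = X^{\Phi}_{\sf C}$).

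The remaining case $\mbox{\rm Bnd}\,\Phi \subsetneq X^{\Phi}_{\sf C}$ requires matching the full $\C$-Jordan structure on the non-bounded central part, and this is the main obstacle. Applying Theorem \ref{thmA} to the realifications $\Phi_{\R}, \Psi_{\R}$ gives $\R$-similarity of $A^{\Phi}_{\R}$ and $\alpha' A^{\Psi}_{\R}$ for some $\alpha' \in \R^+$; the nilpotent/non-nilpotent split via Proposition \ref{prop4a2} (exactly as in thmA) lets us take $\alpha' = \alpha$, whence the \emph{pooled} Jordan structure of $A$ and $\alpha B$ matches at each conjugate pair $\{\imath b, -\imath b\}$. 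To distinguish the individual eigenvalues $\imath b$ and $-\imath b$, the $\C$-linearity of $H$ already forces, for each central $\imath b$ with $b \neq 0$, matching of eigenspace dimensions of $A$ at $\imath b$ and of $B$ at $\imath b / \alpha$ (read off by restricting $HA = \alpha BH$ to $\mbox{\rm Bnd}\,\Phi$); extending this pairing to full Jordan chains is achieved by a leading-order expansion. Concretely, for each Jordan chain $v_1, \dots, v_k$ of $A$ at $\imath b$ with $(A - \imath b I)v_{j+1} = v_j$ and $v_1$ a bottom eigenvector, the identity $h(\varepsilon \Phi_t v_k) = \Psi_{\tau_{\varepsilon v_k}(t)}\bigl(h(\varepsilon v_k)\bigr)$, expanded to first order in $\varepsilon$ as $\varepsilon \downarrow 0$ with $t \to \infty$ in the regime $\varepsilon\, t^{k-1} \to 0$, produces the matching $H \Phi_t v_k \approx e^{\tau B} Hv_k$, whose leading-order term $\tfrac{t^{k-1}}{(k-1)!} e^{\imath b t} H v_1$ (coming from the $\imath b$-rotating polynomial structure of $\Phi_t v_k$) forces $Hv_k$ to be a top chain vector of $B$ of chain length exactly $k$ at eigenvalue $\imath b/\alpha$, with $\tau \sim \alpha t$. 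Running this argument over every Jordan chain of $A$ at every central eigenvalue yields the desired $\C$-linear intertwining of $A$ and $\alpha B$ on all of $X$, and completes the proof.
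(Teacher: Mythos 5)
Your identification of the crux — that for $\K=\C$ a $C^1$-diffeomorphism is complex-differentiable and hence $H := D_0 h$ is automatically $\C$-linear — is correct, and the replay of the $\rho_0,\rho_{0,0}$ argument on $X_{\sf HB}\setminus X^{\Phi}_{\sf C}$ to obtain $HA = \alpha BH$ on $X_{\sf HB}$ matches the paper's strategy. The divergence, and the gap, is in the case $\mathrm{Bnd}\,\Phi \subsetneq X^{\Phi}_{\sf C}$. There you attempt to match the full Jordan structure at each individual central eigenvalue by a local leading-order expansion of $h(\varepsilon\Phi_t v_k) = \Psi_{\tau_{\varepsilon v_k}(t)}\bigl(h(\varepsilon v_k)\bigr)$ in the joint limit $\varepsilon\downarrow 0$, $t\to\infty$. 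This step is not established. The mechanism that controls $\tau$ in the $X_{\sf HB}$ analysis — applying $Q_{\sf S}$ and $Q_{\sf U}$ to force $\liminf_{\varepsilon\downarrow 0}\tau_{\varepsilon x}(t)<\infty$ — is unavailable for a central, unbounded $v_k$, since $P_{\sf S}v_k = P_{\sf U}v_k = 0$; you offer no substitute estimate on $\tau_{\varepsilon v_k}(t)$ in the coupled regime $\varepsilon\, t^{k-1}\to 0$. Moreover, dividing the identity by $\varepsilon$ leaves an error term of size $o(\varepsilon)/\varepsilon$ multiplied by $e^{\tau_{\varepsilon v_k}(t) B}$, whose operator norm is unbounded as $\tau\to\infty$, so the asserted limit $H\Phi_t v_k \approx e^{\tau B}Hv_k$ is not justified. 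Finally, even granting the expansion, $Hv_k$ need not be a single chain vector of $B$ — it can be a linear combination across several Jordan blocks of different lengths and eigenvalues — so the claim that its growth rate pins down ``a top chain vector of $B$ of chain length exactly $k$'' does not follow. Matching the pooled Jordan structure (from Theorem~\ref{thmA} applied to realifications) plus matching eigenspace dimensions at each $\imath b$ (from $HA=\alpha BH$ on $\mathrm{Bnd}\,\Phi$) is genuinely insufficient to conclude $\C$-similarity: one block of size $2$ at $\imath b$ and one of size $3$ at $-\imath b$ versus the swapped configuration is a counterexample.

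The paper instead extends Lemma~\ref{lem6n3} from bounded to central $\C$-linear flows, adapting the iterated-cores machinery of Section~\ref{sec2} (the $\C$-analogues of Lemmas~\ref{lem31} and~\ref{lem35zz}) so that the integers $d_n^{\Phi}(s)$ count individual complex Jordan blocks $\imath s I_n + J_n$ for each signed $s$. The sign separation comes from the differentiability-based argument already built into the proof of Lemma~\ref{lem6n3} (showing $Hx\in Y_b$ with $b>0$ when $x\in X_{|a_{\ell+1}|}$, and $Hx\in Y_{-b}$ when $x\in X_{-|a_{\ell+1}|}$), while the block-size data comes from the $C^0$-invariance of iterated cores. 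This is the systematic substitute for the leading-order expansion you are missing. If you want to rescue your route, you would need to establish a quantitative, uniform version of the joint limit with explicit control of $\tau$, essentially re-deriving the core invariants by hand on each Jordan chain; the paper's cores machinery packages exactly this.
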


\noindent
Apart from a few simple but crucial modifications, the proof of
Theorem \ref{thm6n2} closely follows the arguments in previous
sections and only is outlined here, with most details left to the
interested reader. A noteworthy stepping stone is the following
extension of Theorem \ref{thm4n1}; note that
the increased smoothness is irrelevant when $\K = \R$, but is
essential (for the ``only if'' part) when $\K = \C$, as demonstrated
by the simple example considered earlier.

\begin{lem}\label{lem6n3}
Two bounded $\K$-linear flows $\Phi, \Psi$ are $C^1$-orbit equivalent
if and only if $A^{\Phi}, \alpha A^{\Psi}$ are $\K$-similar for some
$\alpha \in \R^+$. 
\end{lem}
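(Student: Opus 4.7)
\emph{Plan.} The ``if'' direction is immediate: given a $\K$-linear isomorphism $K$ with $KA^{\Phi} = \alpha A^{\Psi} K$ for some $\alpha \in \R^+$, the pair $(h, \tau) := (K, (t,x) \mapsto \alpha t)$ witnesses linear (hence $C^1$) flow equivalence. For the ``only if'' direction over $\K = \R$, $C^1$-orbit equivalence is in particular $C^0$-orbit equivalence, so Theorem \ref{thm4n1} already delivers $\R$-similarity. The substance lies in the case $\K = \C$: Theorem \ref{thm6n1} applied to realifications only yields $\R$-similarity of $A^{\Phi}_{\R}$ and $\alpha A^{\Psi}_{\R}$ for some $\alpha \in \R^+$, pinning down the moduli of the purely-imaginary eigenvalues but not their signs. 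The $C^1$-hypothesis must be used to upgrade to $\C$-similarity; cf.\ the remark preceding Theorem \ref{thm6n2}.

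Assume $\Phi$ is $(h,\tau)$-related to $\Psi$ with $h$ a $C^1$-diffeomorphism of $\C$-vector spaces. By Proposition \ref{prop30a} I may assume $h(0) = 0$, and set $H := D_0 h$. By the convention recorded at the start of Section \ref{secom}, $h$ is $\C$-differentiable, so $H$ is a $\C$-linear isomorphism $X \to Y$. From $h(\mbox{\rm Fix}\,\Phi) = \mbox{\rm Fix}\,\Psi$ (Proposition \ref{prop24}) applied along rays $\varepsilon z$ with $z \in \mbox{\rm Fix}\,\Phi$ and $\varepsilon \downarrow 0$, I obtain $H(\mbox{\rm Fix}\,\Phi) \subset \mbox{\rm Fix}\,\Psi$; equality follows from the topological invariance of dimension (which is also how dimensions of the fixed-point sets get matched in the first place), and in particular
\[
\ker (H A^{\Phi}) = \ker A^{\Phi} = \mbox{\rm Fix}\,\Phi = H^{-1} \mbox{\rm Fix}\,\Psi = \ker(A^{\Psi} H ).
\]

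For $x \in X \setminus \mbox{\rm Fix}\,\Phi$, Proposition \ref{prop24} gives $h(x) \notin \mbox{\rm Fix}\,\Psi$, so $A^{\Psi} h(x) \ne 0$ and $s \mapsto \Psi_s h(x)$ is an immersion at $s = 0$. Fixing a linear functional $\ell$ on $Y$ with $\ell(A^{\Psi} h(x)) \ne 0$, the scalar equation $\ell(\Psi_s h(x) - h(\Phi_t x))= 0$ and the implicit function theorem together show that $\tau_x$ is $C^1$ near $t = 0$; differentiating $h(\Phi_t x) = \Psi_{\tau_x(t)} h(x)$ in $t$ at $t = 0$ then yields
\[
\tau'_x(0)\, A^{\Psi} h(x) = D_x h \cdot A^{\Phi} x .
\]
Substituting $x = \varepsilon z$ with fixed $z \notin \mbox{\rm Fix}\,\Phi$ and letting $\varepsilon \downarrow 0$ shows that $H A^{\Phi} z$ and $A^{\Psi} H z$ are both nonzero and parallel, with positive proportionality constant (the limit of $\tau'_{\varepsilon z}(0)$). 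Proposition \ref{prop4a1} applied to $A^{\Psi} H$ and $H A^{\Phi}$ with $Z := \mbox{\rm Fix}\,\Phi$ (a proper subspace, the case $A^{\Phi} = 0$ being trivial) then produces $\alpha \in \R^+$ with $HA^{\Phi} = \alpha A^{\Psi} H$. The $\C$-linearity of $H$ turns this into the desired $\C$-similarity.

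The main technical obstacle is the $C^1$-regularity of $\tau_x(t)$ in $t$ at $t = 0$; Proposition \ref{prop22} only furnishes continuity, but the immersion structure of $\Psi$ off $\mbox{\rm Fix}\,\Psi$ makes the implicit function theorem directly applicable at any $x \notin \mbox{\rm Fix}\,\Phi$. Everything else is linear-algebra bookkeeping along the lines of the proof of Theorem \ref{thmA}.
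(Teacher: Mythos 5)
Your proof is correct, but it follows a genuinely different and more economical route than the paper's, and in fact establishes more. The paper proves the lemma by extending Lemma~\ref{lem4n2} to $\K=\C$: the $C^1$-hypothesis enters there only through the identity $H\,\mbox{\rm Per}_T\Phi=\mbox{\rm Per}_{\alpha T}\Psi$ (derived from $h(\mbox{\rm Per}_T\Phi)=\mbox{\rm Per}_{\alpha T}\Psi$ by taking $D_0$) and a Helly-selection argument producing a monotone limit $\rho(t)=\lim_n\tau_{\varepsilon_n x}(t)$ along a subsequence $\varepsilon_n\downarrow 0$, which is then differentiated at $t=0$ to track eigenspaces; the remainder re-runs Lemmas~\ref{lem4n3} and~\ref{lem4ny} and the proof of Theorem~\ref{thm4n1}. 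Your argument bypasses all of this. The key observation, which the paper does not make, is that the implicit function theorem already forces $\tau_x$ to be $C^1$ near $t=0$ for every $x\notin\mbox{\rm Fix}\,\Phi$ (because $s\mapsto\Psi_s h(x)$ is an immersion there); the identity can then be differentiated directly, and the substitution $x=\varepsilon z$, $\varepsilon\downarrow 0$ hands everything to Proposition~\ref{prop4a1} with $Z=\mbox{\rm Fix}\,\Phi$. The $\C$-linearity of $H=D_0h$ — needed to turn $HA^{\Phi}=\alpha A^{\Psi}H$ into $\C$-similarity — rests, as you correctly note, on Section~\ref{secom}'s convention that $C^1$ for maps of complex spaces means complex-differentiable (which is what makes the example $[\,\imath\,],[-\imath]$ work). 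Notably, your argument uses no boundedness whatsoever, so it in fact establishes the implication (i)$\Rightarrow$(iv) of Theorem~\ref{thm6n2} directly and sidesteps the $X_{\sf HB}$ construction and the nilpotency dichotomy of Proposition~\ref{prop4a2} appearing in the paper's proof of Theorem~\ref{thmA}; the trade-off is that it does not respect the paper's modular architecture, which builds the smooth classification up from the bounded case via rational classes.
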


\begin{proof} Only the case of
$\K = \C$ needs to be considered. Note that the definition of
$X_{\omega \Q}^{\Phi}$ makes sense in this case, in fact,
$X_{\omega\Q}^{\Phi} = \bigoplus_{s\in \R : \imath s \in \omega \Q}
\mbox{\rm ker}\, (A^{\Phi} - \imath s \, \mbox{\rm id}_X)$, and Proposition \ref{prop4n1a} carries over
verbatim. A crucial step, then, is to show that Lemma \ref{lem4n2},
with similarity in (iii) understood to mean $\C$-similarity, also remains
valid provided that $h:X\to Y$ is a $C^1$-diffeomorphism. For
assertions (i) and (ii), this is obvious, even when $h$ is only a
homeomorphism. Differentiability of $h$, however, in addition yields 
$H \, \mbox{\rm Per}_T \Phi = \mbox{\rm Per}_{\alpha T} \Psi$
for every $T\in \R^+$, where $H= D_0h$ for convenience. To establish
(iii), analogously to the proof of Lemma \ref{lem4n2}, denote
$A^{\Phi}, A^{\Psi}$ by $A,B$ respectively, and let $\sigma (\Phi)
\setminus \{0\} = \{\imath a_1, \ldots , \imath a_m\}$ with the
appropriate $m\in \N_0$ as well as real numbers $a_j$ such that
$|a_1|\ge \ldots \ge |a_m|>0$, and $a_j>a_{j+1}$ in case $|a_j| =
|a_{j+1}|$. Similarly, $\sigma (\Psi) \setminus \{0\} = \{\imath b_1,
\ldots , \imath b_n\}$ with $n\in \N_0$ as well as $|b_1|\ge \ldots
\ge |b_n|>0$, and $b_{j}> b_{j+1}$ whenever $|b_j| = |b_{j+1}|$. For
convenience, $a_0= b_0 = 0$, and $X_s = \mbox{\rm ker}\, (A - \imath
s \, \mbox{\rm id}_X)$, $Y_s = \mbox{\rm ker}\, (B - \imath
s \, \mbox{\rm id}_Y)$ for every $s\in \R$. Since $A,B$ are
diagonalisable, it suffices to prove that $m=n$, and moreover that
\begin{equation}\label{eq6n1}
a_k = \alpha b_k \quad \mbox{\rm and} \quad H X_{a_k} = Y_{b_k} \quad
\forall k = 0, 1, \ldots , m \, .
\end{equation}
To this end, notice that $\mbox{\rm Per}_{2\pi/ |s|} \Phi =
\bigoplus_{k\in \Z} X_{ks}$ and $\mbox{\rm Per}_{2\pi/ |s|} \Psi =
\bigoplus_{k\in \Z} Y_{ks}$ for every $s\in \R \setminus
\{0\}$. Clearly, if $mn=0$ then $m=n=0$, and (\ref{eq6n1})
holds. Henceforth, let $m,n\ge 1$, and assume that, for some integer $0\le \ell < \min \{m,n\}$,
\begin{equation}\label{eq6n2}
a_k = \alpha b_k \quad \mbox{\rm and} \quad H X_{a_k} = Y_{b_k} \quad
\forall k = 0, 1, \ldots , \ell \, ;
\end{equation}
since $HX_0 = Y_0$, this is clearly correct for $\ell = 0$. Letting
$$
K_{\ell} = \bigl\{ k\in \Z \setminus \{-1, 1\} : k |a_{\ell + 1}| \in
\{ a_0, a_1 , \ldots , a_{\ell}\}\bigr\} \, , 
$$
note that $K_{\ell}$ is finite, and $0\in K_{\ell}$. Deduce from
$$
\mbox{\rm Per}_{2\pi/|a_{\ell + 1}|} \Phi = \bigoplus\nolimits_{k\in
  \Z} X_{k |a_{\ell + 1}|} = X_{- |a_{\ell + 1}|} \oplus X_{|a_{\ell +
  1}|} \oplus \bigoplus\nolimits_{k \in K_{\ell}} X_{k |a_{\ell + 1}|}
\, ,
$$
together with (\ref{eq6n2}) and
\begin{align}\label{eq6n3}
HX_{-|a_{\ell + 1}|} \oplus H_{|a_{\ell + 1}|} \oplus
\bigoplus\nolimits_{k\in K_{\ell}} HX_{k |a_{\ell + 1}|} & = H\,
\mbox{\rm Per}_{2\pi / |a_{\ell + 1}|} \Phi = \mbox{\rm Per}_{2\pi
  \alpha / |a_{\ell + 1}|} \Psi \\
& = \bigoplus\nolimits_{k\in \Z \setminus K_{\ell}} Y_{k |a_{\ell +
    1}|/\alpha} \oplus \bigoplus\nolimits_{k \in K_{\ell}} Y_{k
  |a_{\ell + 1}|/\alpha} \, , \nonumber
\end{align}
that $\mbox{\rm dim}\, (X_{-|a_{\ell + 1}|} \oplus X_{|a_{\ell + 1}|})
= \sum_{k \in \Z \setminus K_{\ell}} \mbox{\rm dim}\, Y_{k |a_{\ell +
    1}|/\alpha} > 0$. Hence $\imath k |a_{\ell + 1}|/\alpha \in
\sigma (\Psi)$ for some $k\in \Z \setminus K_{\ell}$, and so in fact
$|a_{\ell + 1}| \le \alpha |b_{\ell + 1}|$, but also $\mbox{\rm dim}\,
(Y_{-|a_{\ell + 1}|/\alpha} \oplus Y_{|a_{\ell + 1}|/\alpha}) \le \mbox{\rm dim}\, (X_{-|a_{\ell + 1}|} \oplus X_{|a_{\ell + 1}|})$
because $\{-1,1\}\subset \Z \setminus K_{\ell}$. Reversing the roles
of $\Phi $ and $ \Psi$ yields that $|a_{\ell + 1}| = \alpha |b_{\ell + 1}|$ and
$\mbox{\rm dim}\, (X_{-|a_{\ell + 1}|} \oplus X_{|a_{\ell + 1}|}) = \mbox{\rm dim}\,
(Y_{-|b_{\ell + 1}|} \oplus Y_{|b_{\ell + 1}|})$. Consequently, (\ref{eq6n3}) becomes
\begin{equation}\label{eq6n4}
HX_{- |a_{\ell + 1}|} \oplus HX_{|a_{\ell + 1}|} \oplus
\bigoplus\nolimits_{k \in K_{\ell}} HX_{k |a_{\ell + 1}|} = Y_{-
  |b_{\ell + 1}|} \oplus Y_{|b_{\ell + 1}|} \oplus
\bigoplus\nolimits_{k \in K_{\ell}} Y_{k |b_{\ell +1}|} \, ,
\end{equation}
and the goal now is to show that (\ref{eq6n2}) holds with $\ell + 1$
instead of $\ell$. To this end, begin by assuming that
$X_{|a_{\ell + 1}|}\ne \{0\}$, and pick any $x\in
X_{|a_{\ell + 1}|}\setminus \{0\}$. Then $\varepsilon x \in \mbox{\rm
  Per}_{2\pi/|a_{\ell + 1}|}\Phi$ and $h(\varepsilon x) \in \mbox{\rm
Per}_{2\pi / |b_{\ell + 1}|}\Psi$ for every $\varepsilon > 0$, as well
as
\begin{equation}\label{eq6n5}
h (e^{tA} \varepsilon x)/\varepsilon = h(e^{\imath t |a_{\ell + 1}|} \varepsilon x)/ \varepsilon =
e^{\tau_{\varepsilon x} (t) B} h (\varepsilon x) / \varepsilon \, .
\end{equation}
Note that $0\le \tau_{\varepsilon x} (t) \le 2 \pi /|b_{\ell + 1}|$
for every $0\le t\le  2\pi / |a_{\ell + 1}|$, and $\tau_{\varepsilon
  x} (\, \cdot \, )$
is increasing. By the Helly selection theorem, there exists a strictly decreasing
sequence $(\varepsilon_n)$ with $\lim_{n \to \infty} \varepsilon_n =
0$, along with an increasing function $\rho$ with $\rho (0) = 0$, $\rho (2\pi/
|a_{\ell + 1}|) = 2\pi / |b_{\ell + 1}|$ such that $\lim_{n\to \infty}
\tau_{\varepsilon_n x} (t) = \rho (t)$ for almost all (in fact, all
but countably many) $0\le t \le 2\pi / |a_{\ell + 1}|$. With this,
(\ref{eq6n5}) yields
$$
H e^{\imath t |a_{\ell + 1}|} x = e^{\rho (t) B} H x \quad \mbox{\rm
  for almost all } 0\le t \le 2\pi/ |a_{\ell + 1}| \, .
$$
Note that $0 < \rho (t) < 2\pi / |b_{\ell + 1}|$ for all $0 < t < 2\pi
/ |a_{\ell + 1}|$. By monotonicity,  $\rho_0 := \lim_{t\downarrow 0}\rho
(t)$ exists, with $0 \le \rho_0 < 2\pi /|b_{\ell + 1}|$. If $\rho_0 > 0$
then $H x \in \mbox{\rm Per}_{\rho_0} \Psi$, and hence $\rho_0
|b_{\ell + 1}| \in 2\pi \N$, which is impossible. Thus $\rho_0 =
0$, and
$$
\imath |a_{\ell + 1}| H x = \lim\nolimits_{t \downarrow 0} H
\frac{e^{\imath t |a_{\ell + 1}|} - 1}{t} x =
\lim\nolimits_{t\downarrow 0} \frac{\rho (t)}{t} \cdot
\frac{e^{\rho(t) B} - \mbox{\rm id}_Y}{\rho (t)} Hx =   \lim\nolimits_{t\downarrow
  0} \frac{\rho (t)}{t} BHx \, ,
$$
showing that $\rho_{0,0}:= \lim_{t\downarrow 0} \rho(t)/t$ exists, with
$\imath |a_{\ell + 1}| Hx = \rho_{0,0} BHx$. Clearly $\rho_{0,0}\ge
0$, in fact, $\rho_{0,0}>0$ since $Hx \ne 0$, and hence $Hx \in Y_{|a_{\ell + 1}|/\rho_{0,0}}$. In other words, if
$x\in X_{|a_{\ell + 1}|}$ then $Hx \in Y_b$ for some
$b\in \R^+$. Completely analogous reasoning yields $Hx \in Y_{-b}$
for some $b\in \R^+$ whenever $x\in X_{-|a_{\ell + 1}|}$.

Recall that the goal is to establish (\ref{eq6n2}) with $\ell+ 1$
instead of $\ell$. To this end, assume first that $|a_{\ell + 1}| = |a_{\ell}|$, and hence
$a_{\ell + 1} = - a_{\ell}< 0$, but also $b_{\ell + 1} = - b_{\ell} <
0$. In this case $X_{a_{\ell + 1}} = X_{- |a_{\ell + 1}|}\ne \{0\}$,
and utilizing the preceding considerations, together with
(\ref{eq6n2}) and (\ref{eq6n4}), it follows that $H
X_{a_{\ell + 1}} \subset Y_{- |b_{\ell + 1}|} = Y_{b_{\ell +
    1}}$. Reversing the roles of $\Phi $ and $\Psi$ yields $HX_{a_{\ell + 1}}
= Y_{b_{\ell + 1}}$. Since $a_{\ell + 1} = \alpha b_{\ell +
  1}$ in this case, (\ref{eq6n2}) holds with $\ell + 1$ instead of $\ell$. 

It remains to consider the case of $|a_{\ell + 1}|< |a_{\ell}|$. Here it is convenient
to distinguish two possibilities: On the one hand, if $\ell = m-1$ or
$|a_{\ell +2}|< |a_{\ell + 1}|$ then exactly one of the spaces $X_{\pm
|a_{\ell + 1}|}$ is different from $\{0\}$. As before, it is readily seen
that $a_{\ell + 1}, b_{\ell + 1}$ have the same sign, hence $a_{\ell + 1} = \alpha b_{\ell + 1}$,
and $H X_{a_{\ell + 1}} =  Y_{b_{\ell + 1}}$, so again
(\ref{eq6n2}) holds with $\ell + 1$ instead of $\ell$. On the other
hand, if $|a_{\ell + 2}| = |a_{\ell + 1}|$ then $a_{\ell + 2} = -
a_{\ell + 1} < 0$, and the argument immediately following (\ref{eq6n3}) shows
that $|a_{\ell + 2}| = \alpha |b_{\ell + 2}|$ also. Thus $a_{\ell +
1} = \alpha b_{\ell + 1} > 0$ and $a_{\ell + 2} = \alpha b_{\ell + 2}
< 0$, and analogous reasoning as before results in $H X_{a_{\ell +1}} = Y_{b_{\ell + 1}}$, $H X_{a_{\ell +
  2}} = Y_{b_{\ell + 2}}$. Again, (\ref{eq6n2}) holds
with $\ell + 1$ (in fact, $\ell + 2$) instead of $\ell$. Induction now
proves (\ref{eq6n1}), and since $X =
\bigoplus_{\ell = 0}^m X_{a_{\ell}}$, $Y = \bigoplus_{\ell = 0}^n
Y_{b_{\ell}}$, clearly $m=n$. As indicated earlier, this establishes
Lemma \ref{lem4n2}(iii) in the case of $\K = \C$ and under the
assumption that $h$ is a $C^1$-diffeomorphism. 

With Lemma \ref{lem4n2} thus extended to complex linear flows, the
remainder of the proof proceeds exactly as in Section \ref{sec3},
since Lemmas \ref{lem4n3} and \ref{lem4ny} carry over without any
modifications, and so does the proof of Theorem \ref{thm4n1}. (In
fact, with the notation used in that proof, the linear isomorphism
$H_j$ can be taken to be the restriction of $D_0h$ to $X_{\lambda_j
  \Q}^{\Phi}$. Thus, instead of being defined abstractly by $A^{\Phi}, \alpha A^{\Psi}$ both
being diagonalisable and having the same eigenvalues with matching
geometric multiplicities, $H$ now simply equals $D_0h$.)
\end{proof}

\begin{proof}[Outline of proof of Theorem \ref{thm6n2}]
Again, one only needs to consider the case of $\K = \C$ and establish
(i)$\Rightarrow$(iv), as in the proof of Theorem \ref{thmA}. The
crucial step is to extend Lemma \ref{lem6n3} from {\em bounded\/} to
{\em central\/} $\K$-linear flows, i.e., to show that (i) implies
$\C$-similarity of $A^{\Phi_{\sf C}}, \alpha A^{\Psi_{\sf C}}$ for
some $\alpha \in \R^+$. To prove the latter along the lines of the
proof of Theorem \ref{thmB}, with $X_s = \mbox{\rm ker}\,
(A^{\Phi_{\sf C}} - \imath s \, \mbox{\rm
id}_{X_{\sf C}^{\Phi}})$, $Y_s = \mbox{\rm ker}\, (A^{\Psi_{\sf C}} - \imath s \,
\mbox{\rm id}_{Y_{\sf C}^{\Psi}})$ for
every $s\in \R$, it is necessary to first adjust the auxiliary
results of Section \ref{sec2}, notably Lemmas \ref{lem31} and \ref{lem35zz}, for complex linear flows. With the
details of these routine adjustments left to the reader, the
non-negative integer $d_n^{\Phi} (s)$ now equals, for each $n\in \N$
and $s\in \R$, the number of blocks $\imath s I_n + J_n$ in the (complex) Jordan
normal form of $A^{\Phi}$. Utilizing the proof of
Lemma \ref{lem6n3}, deduce that $m=n$, as well as $a_k = \alpha b_k$
for $k=0,1 \ldots, m$ and an appropriate $\alpha \in \R^+$, and that
moreover  $d_n^{\Phi} (a_k) = d_n^{\Psi} (b_k)$ for all $n,k$. Again,
the differentiability of $h, h^{-1}$ is essential here, unlike in the
proof of Theorem \ref{thmB}. Thus, $A^{\Phi_{\sf C}}, \alpha
A^{\Psi_{\sf C}}$ indeed are $\C$-similar, which in turn
proves that (i)$\Rightarrow$(iv) in case $X_{\sf C}^{\Phi} =
X$. Apart from the fact that this latter extension of Lemma
\ref{lem6n3}, rather than Theorem \ref{thmB}, has to be used to
establish (\ref{eq4t2p6}), the remaining argument now is identical
to the one proving Theorem \ref{thmA} in Section \ref{sec4}.
\end{proof}

To finally illustrate the difference between real and complex linear flows in
dimensions $1$ and $2$, recall that on $X=\R$ there are
exactly three ($C^0$- or $C^1$-) equivalence classes of $\R$-linear
flows, represented by $\Phi(t,x) = e^{t a}x$ with $a\in \{-1,0,1\}$. By
contrast, on $X=\C$ there are four $C^0$-equivalence classes of
$\C$-linear flows, represented by $\Phi (t,x) = e^{tc}x$ with $c\in
\{-1,0,1,\imath\}$, but infinitely many $C^1$-equivalence classes,
corresponding to $c\in \{\omega \in \C :
|\omega|=1\}\cup \{0\}$. Similarly, on $X= \R^2$ there are exactly eight
$C^0$-equivalence classes of $\R$-linear flows, listed in (\ref{eq3}),
whereas for $\C$-linear flows on $X=\C^2$, all $C^0$-equivalence
classes are given by all the matrices in (\ref{eq3}) except
for the left-most, together with
$$
\pm \left[\begin{array}{cc}
1 & 0 \\0 & \imath
\end{array}
\right]  , \enspace
\left[\begin{array}{cc}
\imath  & 1 \\0 & \imath 
\end{array}
\right]  , \enspace
\left[\begin{array}{cc}
\imath  & 0 \\ 0 & \imath a
\end{array}
\right] \quad (0\le a \le 1) \, ,
$$
and all $C^1$-equivalence classes are given by
$$
\left[\begin{array}{cc}
0 & 0 \\0 & 0
\end{array}
\right]  , \enspace
\left[\begin{array}{cc}
0 & 1 \\0 & 0
\end{array}
\right]  , \enspace
\left[\begin{array}{cc}
c & 1 \\0 & c
\end{array}
\right]  , \enspace
\left[\begin{array}{cc}
c & 0 \\0 & \omega
\end{array}
\right]   \quad (c,\omega \in \C, |c|=1, |\omega|\le 1) \, .
$$
The reader may want to compare the latter to the seven singleton classes
and five infinite families that make up all $C^1$-equivalence classes
of $\R$-linear flows on $X = \R^2$, as listed in the Introduction;
cf.\ also \cite[Ex.1]{Markus}.

\subsubsection*{Acknowledgements}

The first author was partially supported by an {\sc Nserc} Discovery
Grant. The authors gratefully acknowledge helpful comments and
suggestions made by C.\ Kawan, T.\ Oertel-J\"ager, G.\ Peschke, C.\
P\"otzsche, and V.\ Troitsky.

\end{document}